\documentclass[3p]{elsarticle}

\pdfoutput=1

\makeatletter
\def\ps@pprintTitle{
    \let\@oddhead\@empty
    \let\@evenhead\@empty
    \def\@oddfoot{}
    \let\@evenfoot\@oddfoot}
\makeatother

\journal{}

\usepackage{graphicx}
\usepackage{amsmath,amsfonts,amsthm,amscd,mathrsfs,amssymb,upref,multirow,color}

\usepackage[utf8]{inputenc}
\usepackage{comment}
\usepackage[dvipsnames]{xcolor}
\usepackage{textcomp}

\newcommand{\R}{\mathbb{R}}
\newcommand{\N}{\mathbb{N}}

\newcommand{\ii}{\mathrm{i}}
\def\d{\,\mathrm{d}}

\newcommand{\vertiii}[1]{{\left\vert\kern-0.25ex\left\vert\kern-0.25ex\left\vert #1 \right\vert\kern-0.25ex\right\vert\kern-0.25ex\right\vert}}

\newcommand{\dd}{\mathrm{d}}
\newcommand{\vpt}{\tilde{v}_{\texttt{pw}}}
\newcommand{\vnt}{\tilde{v}_{\texttt{nw}}}

\newcommand{\fucik}{Fu\v{c}\'{\i}k}

\newtheorem{theorem}{Theorem}
\newtheorem{lemma}[theorem]{Lemma}
\newtheorem{proposition}[theorem]{Proposition}
\newtheorem{conjecture}[theorem]{Conjecture}
\newtheorem{corollary}[theorem]{Corollary}
\theoremstyle{definition} 
\newtheorem{remark}[theorem]{Remark}

\newtheorem*{acknowledgement}{Acknowledgement}


\begin{document}
	
\begin{frontmatter}
	
	\title{Lower Bounds for Admissible Values of the Travelling Wave Speed in~Asymmetrically Supported Beam}
	
	\author{Hana Form\'{a}nkov\'{a} Lev\'{a}}
	\ead{levah@kma.zcu.cz}
	\author{Gabriela Holubov\'{a}}
	\ead{gabriela@kma.zcu.cz}
	\author{Petr Ne\v{c}esal}
	\ead{pnecesal@kma.zcu.cz}
	
	\address{Department of Mathematics and NTIS, Faculty of Applied Sciences, University of~West Bohemia, Univerzitn\'{\i}~8, 301~00~Plze\v{n}, Czech Republic}
	
	\begin{abstract}
		We study the admissible values of the wave speed $c$ for which the beam equation with jumping nonlinearity possesses a travelling wave solution. In contrast to previously studied problems modelling suspension bridges, the presence of the term with negative part of the solution in the equation results in restrictions of $c$. In this paper, we provide the maximal wave speed range for which the existence of the travelling wave solution can be proved using the Mountain Pass Theorem. We also introduce its close connection with related Dirichlet problems and their \fucik~spectra. Moreover, we present several analytical approximations of the main existence result with assumptions that are easy to verify. Finally, we formulate a conjecture that the infimum of the admissible wave speed range can be described by the \fucik~spectrum of a simple periodic problem.
	\end{abstract}
		
	\begin{keyword}
    	beam equation \sep
   	    jumping nonlinearity \sep    	
		travelling wave \sep 
		Mountain Pass Theorem \sep
		\fucik~spectrum \sep
        Swift-Hohenberg operator \sep
        Padé approximation	
		
		\medskip
		
		\MSC 
		 35C07 \sep 35A15 \sep 34B15 \sep 34B40
		  
	\end{keyword}

\end{frontmatter}


\section{Motivation}
\label{sec:motivation}

When studying the existence of homoclinic travelling waves in asymmetrically supported vibrating beams, i.e., solutions of the problem
\begin{equation}
\label{eq:original_problem}
\left\{ \begin{array}{l}
u_{tt}+u_{xxxx}+ a u^+ - b u^- = 1, \quad x \in \R, \, t>0,\\[3pt]
u(x,t) \rightarrow 1/a, ~u_{x}(x,t) \rightarrow 0~~\textup{for}~|x| \rightarrow +\infty,
\end{array} \right.
\end{equation}
which can be expressed in the form $u(x,t) = y(x-ct)$,
the following basic question naturally arises:

\begin{center}
\emph{$Q_c$: What are the admissible values of wave speed $c$ for which the travelling wave solutions exist?}
\end{center}

\noindent
In \eqref{eq:original_problem}, 
$a$, $b$ are positive constants and $u^+$ and $u^-$ denote the positive and negative parts of $u$, i.e., $u^{\pm}(x,t) = \max\{\pm u(x,t),0\}$, $u = u^+-u^-$. 
Substituting $u(x,t) = y(x-ct) = y(s)$, the PDE in \eqref{eq:original_problem} is transformed into an ODE
\begin{equation}
\label{eq:transformed_ode1}
y^{(4)}+c^2 y''+ a y^{+}-b y^{-} = 1.
\end{equation}
Further, putting $z(s) := y(s) - 1/a$, i.e., translating the equilibrium $1/a$ of $y$ to zero,  we come to the problem
\begin{equation}
\label{eq:transformed_ode2}
\left\{ 
\begin{array}{ll} 
z^{(4)}+c^2 z''+ a (z+1/a)^{+}-b (z+1/a)^{-} - 1 = 0,\\[3pt]
z, \, z' \rightarrow 0~~\textup{for}~|s| \rightarrow +\infty.  
\end{array} 
\right.
\end{equation}
A necessary condition for the existence of $z$ satisfying \eqref{eq:transformed_ode2} 
follows from \cite{karageorgis} and can be written as
\begin{equation}
\label{eq:necessary_con}
|c| \in \left(\sqrt[4]{4b},\sqrt[4]{4a}\right).
\end{equation}
In particular, Corollary 2 in \cite{karageorgis} states that \eqref{eq:transformed_ode2}
has only a trivial solution for $c$ that does not fulfill $\eqref{eq:necessary_con}$. 
It was shown in \cite{chen_mckenna} that the condition \eqref{eq:necessary_con} 
for $b = 0$, i.e., $|c| \in \left(0,\sqrt[4]{4a}\right)$, 
is also the sufficient one and that this interval stays preserved even if we include an additional nontrivial nonlinearity $g(u)$ satisfying suitable assumptions into the original model (for details see \cite{chen_mckenna}, \cite{chen} or \cite{holubova_leva}).

As for the sufficient condition in the case $b > 0$, 
it was proved in \cite{holubova_leva} 
that the nontrivial solutions of \eqref{eq:transformed_ode2} and hence the travelling wave solutions of \eqref{eq:original_problem} exist for any $c$ satisfying
$$
|c| \in \left(\sqrt[4]{100b/9},\sqrt[4]{4a}\right).
$$
The authors in \cite{holubova_leva} however admit that the lower bound $\sqrt[4]{100b/9}$ is probably too restrictive and the range of $|c|$ should be extended below this value.

The aim of this paper is to describe the optimal lower bound for $|c|$, i.e.,  to enlarge as much as possible the interval of admissible wave speeds $c$ for which the travelling wave solution of \eqref{eq:original_problem} exists. The paper is organized as follows.
The next section provides the first (theoretical) answer to $Q_c$, namely, the maximal wave speed range for which the existence of the travelling wave solution can be proved using the mountain pass technique (cf. Theorem \ref{th:main1}). 
Sections \ref{sec:omega_minus} and \ref{sec:connection} concretize the description, 
show the connection of the wave speed bounds with the \fucik\ spectrum of related Dirichlet problems and 
reformulate the result of Theorem \ref{th:main1} into a more transparent version (cf. Theorem \ref{th:main2}). 
In particular, we prove that the existence of a travelling wave solution is guaranteed for any 
$|c| \in \left(c^{\ast},\sqrt[4]{4a}\right)$ with $c^{\ast} > \sqrt[4]{4b}$ determined by the envelope 
of all related Dirichlet \fucik\ spectra. 
The obtained wave speed range is visualized in Figure \ref{fig:main} and an open question left 
is whether it can be further extended up to $|c| \in \left(\sqrt[4]{4b},\sqrt[4]{4a}\right)$ using 
some other (nonvariational) techniques or whether there are really no nontrivial solutions 
for $|c|$ close enough to $\sqrt[4]{4b}$.

\begin{figure}[h]
\centerline{
  \setlength{\unitlength}{1mm}
  \begin{picture}(50, 50)(0,0)
    \put(0,0){\includegraphics[height=5.0cm]{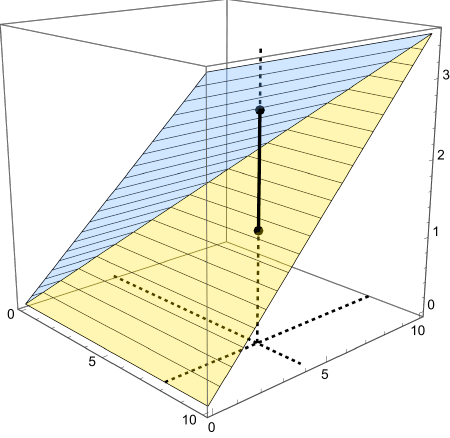}}
    \put(10,2){\makebox(0,0)[lb]{\footnotesize$\sqrt[4]{4a}$}}         
    \put(41,6){\makebox(0,0)[lb]{\footnotesize$\sqrt[4]{4b}$}}         
    \put(53,40){\makebox(0,0)[lb]{\footnotesize$c_{\texttt{max}}$}}                     
    \put(53,35){\makebox(0,0)[lb]{\footnotesize$|c|$}}                 
    \put(53,31){\makebox(0,0)[lb]{\footnotesize$c_{\texttt{min}}$}}                         
  \end{picture}    
  \hspace{48pt}
  \begin{picture}(50, 50)(0,0)
    \put(0,0){\includegraphics[height=5.0cm]{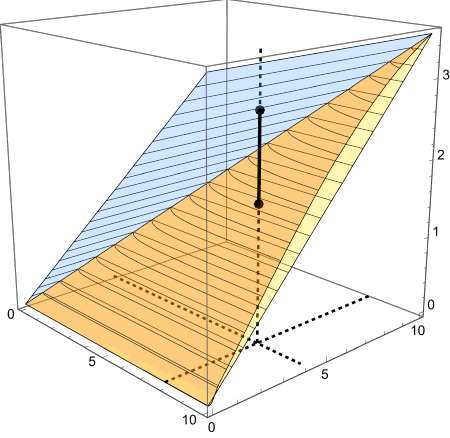}}
    \put(10,2){\makebox(0,0)[lb]{\footnotesize$\sqrt[4]{4a}$}}         
    \put(41,6){\makebox(0,0)[lb]{\footnotesize$\sqrt[4]{4b}$}}             
    \put(53,40){\makebox(0,0)[lb]{\footnotesize$c_{\texttt{max}}$}}                     
    \put(53,35){\makebox(0,0)[lb]{\footnotesize$|c|$}}                 
    \put(53,31){\makebox(0,0)[lb]{\footnotesize$c^{\ast}$}}                         
    \put(53,27){\makebox(0,0)[lb]{\footnotesize$c_{\texttt{min}}$}}                             
  \end{picture}    
  }
\caption{
Dependence of the lower and upper bounds of the necessary condition $|c|\in(c_{\texttt{min}},c_{\texttt{max}})$  
and the sufficient condition  $|c|\in(c^{*},c_{\texttt{max}})$  on parameters $a$ and $b$ ($0 < b < a$). 
Cf. the main existence
result in Theorem \ref{th:main2}.
Here, $c_{\texttt{min}}$ is in yellow,  $c_{\texttt{max}}$ in blue and $c^{\ast}$ is in orange color.}
\label{fig:main}
\end{figure}
 
In Sections \ref{sec:approx1} and \ref{sec:approx2}, we prove several analytical approximations 
of results from the previous sections. In particular, we construct two different families 
of subintervals of $\left(c^{\ast},\sqrt[4]{4a}\right)$ which are easy to evaluate
(cf. Theorems \ref{veta11} and \ref{veta12}) and find their envelopes in a reversed setting 
meaning that we fix the wave speed $c$ and characterize 
the admissible values of parameters $a$ and $b$
(cf. Theorems \ref{veta22} and \ref{veta33}).
Finally, in Section \ref{sec:conjecture}, we introduce
another way how to describe the admissible wave speed range, however, all assertions stated 
in Conjecture \ref{con:periodic1} remain in the form of hypotheses, and their proofs or disproofs are still open. 
These and also other open questions are summarized in Section~\ref{sec:final}.
Some auxiliary technical derivations are intentionally moved to Appendices.


\section{First answer to $Q_c$}
\label{sec:answer1}

Let us denote $\Omega:=(1,+\infty)\times(0,1)$, consider the Hilbert space $H^2(\R) = W^{2,2}(\R)$ with the standard norm
$\|z\|_{H^2}^2 =\int_{\R}\left((z'')^2 + (z')^2 + z^2\right) \d s$ and introduce 
the functional
$J: ~\R^2 \times H^2(\R) \to \R$ in the form
\begin{equation}
\label{eq:J}
J(\alpha,\beta,v) := \int\limits_{\R}\left((v'')^2-2(v')^2 + \alpha (v^+)^2  
+ \beta(v^-)^2 \right) \d x
\end{equation}
and the set
\begin{equation}
\label{eq:omega_minus}
\Omega^{\texttt{-}} := \left\{ (\alpha,\beta)\in \Omega: ~ \exists v \in H^2(\R): ~ J(\alpha,\beta,v) < 0 \right\}.
\end{equation}
The first answer to $Q_c$ (in the sense of a sufficient condition) can be formulated as follows.

\begin{theorem}
\label{th:main1}
Let $a > b > 0$ be arbitrary but fixed.
Then the travelling wave solution of \eqref{eq:original_problem} exists for any wave speed $c$ satisfying
\begin{equation}
\label{eq:c_in_omega_minus}
\left(\frac{4a}{c^4},\frac{4b}{c^4}\right) \in \Omega^{\texttt{-}}.
\end{equation}
\end{theorem}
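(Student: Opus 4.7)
The plan is to reduce \eqref{eq:transformed_ode2} by a rescaling of the independent variable so that $J$ emerges as the asymptotic quadratic part of a natural energy functional on $H^2(\R)$, and then to apply the Mountain Pass Theorem.

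First, substitute $w(x) := z(\gamma x)$ with $\gamma := \sqrt{2}/|c|$. Since $c^2\gamma^2 = 2$ and $\gamma^4 = 4/c^4$, equation \eqref{eq:transformed_ode2} becomes
\begin{equation*}
w^{(4)} + 2 w'' + \alpha (w + 1/a)^+ - \beta (w + 1/a)^- - \alpha/a = 0, \qquad w,\,w' \to 0 \text{ as } |x|\to\infty,
\end{equation*}
with $\alpha := 4a/c^4$ and $\beta := 4b/c^4$; the containment $(\alpha,\beta) \in \Omega$ is equivalent to $\alpha > 1 > \beta > 0$. The naturally associated $C^1$-functional is
\begin{equation*}
\Phi(w) := \int_\R \Bigl(\tfrac12 (w'')^2 - (w')^2 + G(w)\Bigr)\,\dd x,\quad
G(w) := \tfrac{\alpha}{2}\bigl((w + 1/a)^+\bigr)^2 + \tfrac{\beta}{2}\bigl((w + 1/a)^-\bigr)^2 - \tfrac{\alpha}{a}\,w - \tfrac{\alpha}{2a^2},
\end{equation*}
normalized so that $G(0)=0$, and whose Euler-Lagrange equation is exactly the scaled ODE above.

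Mountain-pass geometry rests on two observations. On the one hand, $G(w) \equiv \tfrac{\alpha}{2} w^2$ \emph{exactly} whenever $w(x) \geq -1/a$ for all $x$, so by the embedding $H^2(\R) \hookrightarrow L^\infty(\R)$, on a sufficiently small $H^2$-ball one has $\Phi(w) = \tfrac12\int_\R ((w'')^2 - 2(w')^2 + \alpha w^2)\,\dd x$. Because $\alpha > 1$, the Fourier symbol $\xi^4 - 2\xi^2 + \alpha = (\xi^2-1)^2 + (\alpha-1)$ dominates a positive multiple of $(1+\xi^2)^2$, so Parseval yields $\Phi(w) \geq \delta \|w\|_{H^2}^2$ near $0$. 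On the other hand, by the hypothesis $(\alpha,\beta) \in \Omega^{\texttt{-}}$ one can pick $v^* \in H^2(\R)$ with $J(\alpha,\beta,v^*) < 0$; the pointwise limit $G(tv^*)/t^2 \to \tfrac{\alpha}{2}(v^{*+})^2 + \tfrac{\beta}{2}(v^{*-})^2$ together with a uniform quadratic majorant and dominated convergence then give $\Phi(tv^*)/t^2 \to \tfrac12\,J(\alpha,\beta,v^*) < 0$, so $\Phi(t_0 v^*) < 0$ for some large $t_0$. This yields the mountain-pass geometry between $0$ and $t_0 v^*$ with strictly positive critical level.

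The main obstacle is compactness: because $\R$ is unbounded and $\Phi$ translation-invariant, the standard Palais-Smale condition fails. I would address this either by restricting to the invariant subspace of even functions in $H^2(\R)$ (using local compactness together with a decay/splitting argument at infinity ruled out by symmetry), or by invoking a concentration-compactness principle of Lions type; in either case the fact that $(\alpha,\beta)$ lies strictly inside $\Omega^{\texttt{-}}$ keeps the limiting asymptotic quadratic problem non-resonant, and bounded PS sequences admit strongly convergent subsequences. The resulting nontrivial critical point $w_*\in H^2(\R)$ is upgraded to a classical solution by elliptic bootstrap, the decay $w_*, w_*'\to 0$ at infinity follows from $H^2\hookrightarrow C^1$ plus a standard fourth-order ODE argument, and undoing the rescaling and the shift gives the travelling wave $u(x,t) = w((x-ct)/\gamma) + 1/a$ of \eqref{eq:original_problem}. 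The only genuinely delicate step is the compactness argument; the remainder is a scaling calculation and a Parseval estimate.
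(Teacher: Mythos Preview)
Your proposal is correct and follows the same conceptual route as the paper: a mountain-pass argument for the natural energy functional, in which the hypothesis $(\alpha,\beta)\in\Omega^{\texttt{-}}$ furnishes a direction along which the functional becomes negative because its rescaled asymptotic quadratic part is exactly $J$. The only difference is one of packaging: the paper works with the unscaled functional $I$ in \eqref{eq:functional}, performs the rescaling $x = s|c|/\sqrt{2}$ only on the limiting quadratic form, and then \emph{delegates} both the local-minimum verification at $0$ and the entire Palais--Smale/compactness issue to the main existence result of \cite{holubova_leva}, whereas you rescale the equation first and sketch those ingredients yourself (Parseval for the local minimum, concentration-compactness or restriction to even functions for compactness). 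What you identify as ``the only genuinely delicate step'' is precisely what the paper does not reprove but cites.
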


\begin{proof}
All weak (and due to regularity also classical) solutions $z \in H^2(\R)$ of \eqref{eq:transformed_ode2} correspond to critical points of the functional $I \in C^1(H^2(\R),\R)$,
\begin{equation}
\label{eq:functional}
I(z) := 
\dfrac{1}{2}\int\limits_{\R}\left((z'')^2-c^2(z')^2 + az^2 - (a-b)\left((z + 1/a)^-\right)^2 \right)\d s. 
\end{equation}
Trivially, $I(0) = 0$. Exploiting the results of \cite{holubova_leva}, we can see that $I$ has a local minimum at $0$ and another nontrivial critical point exists, if $I$ has the mountain pass geometry. This requires the existence of some $e \in H^2(\R) \setminus \{0\}$ such that $I(e) < 0$. Considering $e = Av$ with $A$ being a positive real number and $v$ some fixed element in $H^2(\R)$, we can write
\begin{equation}
\label{eq:I(e)}
I(Av) = \dfrac{A^2}{2} \int\limits_{\R}\left((v'')^2-c^2(v')^2 + a v^2 - (a-b)\left(\left(v + 1/(aA)\right)^-\right)^2 \right)\d s .
\end{equation}
Hence, for $A$ and $v$
such that the integral in \eqref{eq:I(e)} is negative, we obtain the required condition $I(e) < 0$. Moreover, 
\begin{equation}
\label{eq:functional_reduced}
\lim_{A \to +\infty} \frac{2I(Av)}{A^2} = \int\limits_{\R}\left((v'')^2-c^2(v')^2 +a (v^+)^2  
+ b(v^-)^2 \right) \d s =: {\cal J}(a,b,c,v).
\end{equation}
Transforming $x = s |c|/\sqrt{2}$ and denoting
\begin{equation}
\label{eq:alpha_beta}
\alpha := \frac{4a}{c^4}, \qquad \beta := \frac{4b}{c^4}
\end{equation}
we obtain ${\cal J}(a,b,c,v) = {{|c|^3}/(2\sqrt{2}}) \, J(\alpha,\beta,v)$ 
with $J$ given by \eqref{eq:J}.

Now, for any $c$ 
satisfying \eqref{eq:c_in_omega_minus}, 
we have $(\alpha,\beta) \in \Omega^{\texttt{-}}$.
That is, 
there exists $v \in H^2(\R)$ such that $J(\alpha,\beta,v)<0$.
Consequently, ${\cal J}(a,b,c,v) < 0$ and 
hence $I(e) < 0$ with $e = Av$ for $A>0$ large enough. This implies that the functional $I$ has the required geometry and the existence of the travelling wave solution of \eqref{eq:original_problem} then follows directly from the main result in \cite{holubova_leva}.
\end{proof}

\begin{remark}~
\label{rem:omega}
\begin{enumerate}
\item 
Due to \eqref{eq:alpha_beta} and the necessary condition for 
the existence of the travelling wave solution $\sqrt[4]{4b} < |c|< \sqrt[4]{4a}$, 
we have $\beta < 1 < \alpha$. Hence, the restriction to $\Omega$ in \eqref{eq:omega_minus} makes sense.
\item
Let us note that for $\alpha > 1$ and $\beta \geq 1$, we obtain 
\begin{equation}  \label{mrce1}
J(\alpha,\beta,v) \geq  \int\limits_{\R} \left((v'')^2-2(v')^2 + v^2 \right) \d x 
=  \int\limits_{\R} \left(\omega^2-1 \right)^2\hat{v}^2 \d \omega
> 0 
\end{equation}
for any nontrivial $v \in H^2(\R)$. Here, $\hat{v}(\omega) = \mathcal{F}(v(x))$ denotes the Fourier transform of $v=v(x)$.
Further, we have 
$I(Av) \geq A^2 |c|^3/(4\sqrt{2}) J(\alpha,\beta,v)$, 
since $(v + 1/(aA))^- \leq v^-$ for any $aA > 0$. Hence, if $J(\alpha,\beta,v) \geq 0$ for all $v$, the functional $I$ loses the mountain pass geometry around zero.
\end{enumerate}
\end{remark}
 
In the following sections we provide several characterizations of the set $\Omega^{\texttt{-}}$ leading to more explicit descriptions of the admissible wave speed range.

\section{Characterization of $\Omega^{\texttt{-}}$}
\label{sec:omega_minus}

One partial characterization of $\Omega^{\texttt{-}}$, and hence answer to $Q_c$, was already provided in \cite{holubova_leva}. 

\begin{lemma}[cf. \cite{holubova_leva}]
\label{lemma:9/25}
Let $\Omega^{\texttt{-}}$ be defined by \eqref{eq:omega_minus}. Then 
$$
(1,+\infty)\times \big(0, \tfrac{9}{25}\big) \subset \Omega^{\texttt{-}}.
$$
\end{lemma}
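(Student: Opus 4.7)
My plan is to reduce the existence of $v \in H^2(\R)$ with $J(\alpha,\beta,v)<0$ to a scaling argument on a single explicit sign-constant bump. The key observation is that $\alpha$ enters $J$ only through the term $\alpha\int_\R (v^+)^2 \dd x$; if we restrict to $v\le 0$, this term vanishes identically and
\[
J(\alpha,\beta,v) \;=\; \int_\R \bigl((v'')^2 - 2(v')^2 + \beta v^2\bigr) \dd x,
\]
independently of $\alpha$. Producing one $v \le 0$ in $H^2(\R)$ making this $\beta$-dependent integral negative therefore shows $(\alpha,\beta) \in \Omega^{\texttt{-}}$ simultaneously for \emph{all} $\alpha > 1$, and the claim reduces to: for every $\beta \in (0,9/25)$ exhibit $u \in H^2(\R)$ with $u \ge 0$, $u \not\equiv 0$ and $\int_\R\bigl((u'')^2 - 2(u')^2 + \beta u^2\bigr)\dd x < 0$.

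For the test function I would take $u_k(x) := f(kx)$ with $k>0$ a scaling parameter, where $f(y) = (1+\cos y)^{3/2}$ for $|y|\le \pi$ and $f(y) = 0$ otherwise. Using the identity $1 + \cos y = 2\sin^2((\pi\mp y)/2)$ near $y = \pm\pi$ one gets $f(y) \sim c\,(\pi\mp y)^3$, whence $f$, $f'$ and $f''$ are continuous across the boundary of the support (and $f''$ vanishes there), so $u_k\in C^2(\R)$ with compact support and hence $u_k \in H^2(\R)$. The change of variable $y = kx$ then gives $\int u_k^2 \dd x = C_0/k$, $\int (u_k')^2 \dd x = k B_0$ and $\int (u_k'')^2 \dd x = k^3 A_0$, where $A_0,B_0,C_0$ denote the integrals of $(f'')^2,(f')^2,f^2$ over $[-\pi,\pi]$. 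Writing the target inequality as the quadratic $A_0 t^2 - 2 B_0 t + \beta C_0 < 0$ in $t = k^2$, one sees that a suitable $k$ exists precisely when $\beta < B_0^2/(A_0 C_0)$. Expressing the three integrals via $1+\cos y = 2\cos^2(y/2)$ reduces them to beta integrals of the form $\int_0^{\pi/2}\cos^a z\sin^b z\,\dd z$ with half-integer parameters, whose closed-form evaluation gives $B_0^2/(A_0 C_0) = 9/25$.

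The main obstacle, in my view, is identifying the correct non-integer exponent $3/2$. Performing the same computation for the one-parameter family $f_p(y) = (1+\cos y)^p$ produces the closed-form ratio $R(p) = p(4p-3)/\bigl((4p-1)(3p-2)\bigr)$, and elementary calculus shows its unique maximum on $p \ge 1$ is $R(3/2) = 9/25$. Naive integer choices such as $p = 1$ yield only $1/3 < 9/25$, and standard convex bumps such as the Gaussian likewise deliver at most $1/3$, so the fractional exponent is genuinely necessary to reach the value $9/25$. The slightly delicate $H^2$-regularity of $u_k$ at the endpoints of its support is offset by the explicit cubic asymptotics noted above, and the remaining beta-function bookkeeping is mechanical.
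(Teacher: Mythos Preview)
Your proof is correct and, once one notes the identity $(1+\cos y)^{3/2} = 2\sqrt{2}\cos^3(y/2)$, uses exactly the same test function as the paper: the paper takes $v_0(x) = -\tfrac{4}{\sqrt{5\sqrt{5}\pi}}\cos^3(x/\sqrt{5})$ on $[-\sqrt{5}\pi/2,\sqrt{5}\pi/2]$ and simply computes $J(\alpha,\beta,v_0)=\beta-\tfrac{9}{25}$, which is your $-u_k$ (up to normalization) at the optimal scale $k=2/\sqrt{5}$. Your extra scaling argument and the optimization over the exponent $p$ are nice in that they \emph{explain} the choices that the paper presents as a fait accompli, but the underlying construction is identical.
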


\begin{proof}
Taking $v_0(x) =  -4 \cos^3(x/\sqrt{5}) / \sqrt{5\sqrt{5}\pi}$ for 
$x \in [-\sqrt{5}\pi/2, \sqrt{5}\pi/2]$ and $v_0(x) = 0$ elsewhere, we can directly compute 
$J(\alpha,\beta,v_0) = \beta - \frac{9}{25}$. Hence the assertion follows.
\end{proof}

To provide the description of $\Omega^{\texttt{-}}$,
let us set $\alpha = q \beta$ with $q \in (1,+\infty)$ arbitrary but fixed, i.e., 
$$
J(\alpha,\beta, v) = J(q\beta,\beta,v) = \int\limits_{\R}\left((v'')^2-2(v')^2 + \beta \left( q(v^+)^2  
+ (v^-)^2 \right) \right)\d x
$$
and denote
\begin{equation}
\label{eq:beta_star_orig}
\beta^{\ast}(q)  := \sup\left\{  \beta>0: ~ \exists v \in H^2(\R): ~ J(q\beta,\beta,v) = 0 \right\},\quad q > 1.
\end{equation}
Notice that Lemma \ref{lemma:9/25} and Remark \ref{rem:omega} immediately imply 
$\frac{9}{25} \leq \beta^{\ast}(q) \leq 1$ for any $q > 1$.
Using $\beta^{\ast}$, we can characterize $\Omega^{\texttt{-}}$ in the following way.

\begin{lemma}
\label{lemma:omega_minus}
Let $\beta^{\ast}$ be given by \eqref{eq:beta_star_orig}. Then
\begin{equation}
\label{eq:omega_minus_2}
\Omega^{\texttt{-}}  = \left\{(q\beta, \beta)\in\Omega: q > 1,\ \beta < \beta^{\ast}(q)\right\}.
\end{equation}
\end{lemma}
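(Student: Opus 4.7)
The plan hinges on an algebraic observation: for any fixed $q > 1$ and any nontrivial $v \in H^2(\R)$, the map $\beta \mapsto J(q\beta,\beta,v)$ is affine in $\beta$, namely
\[
J(q\beta,\beta,v) = A(v) + \beta\, B(q,v),
\]
where $A(v) := \int_{\R}((v'')^2 - 2(v')^2)\d x$ and $B(q,v) := \int_{\R}(q(v^+)^2 + (v^-)^2)\d x > 0$. The slicing of $\Omega$ along the rays $\alpha = q\beta$ is exactly the parametrization used in the definition of $\beta^{\ast}(q)$, so the whole lemma reduces to exploiting the strict monotonicity of this affine function in $\beta$.

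For the inclusion $\Omega^{\texttt{-}} \subseteq \{(q\beta,\beta) \in \Omega : q>1,\ \beta < \beta^{\ast}(q)\}$, I would take $(\alpha,\beta)\in\Omega^{\texttt{-}}$, set $q := \alpha/\beta$ (which automatically satisfies $q > 1$ since $\alpha > 1 > \beta$ on $\Omega$), and pick $v$ witnessing $J(q\beta,\beta,v) < 0$. Such $v$ is necessarily nontrivial because $J(\cdot,\cdot,0) \equiv 0$. The affine function $\beta' \mapsto A(v) + \beta' B(q,v)$ is then strictly increasing, negative at $\beta' = \beta$, and tends to $+\infty$ as $\beta' \to +\infty$, so the intermediate value theorem produces $\beta^{\dagger} > \beta$ with $J(q\beta^{\dagger},\beta^{\dagger},v) = 0$; this places $\beta^{\dagger}$ in the defining set of $\beta^{\ast}(q)$ and yields $\beta < \beta^{\dagger} \leq \beta^{\ast}(q)$.

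For the reverse inclusion, I would take $(q\beta,\beta) \in \Omega$ with $q > 1$ and $\beta < \beta^{\ast}(q)$. The supremum characterization of $\beta^{\ast}(q)$ then provides some $\beta^{\dagger}$ with $\beta < \beta^{\dagger} \leq \beta^{\ast}(q)$ and a nontrivial $v$ satisfying $J(q\beta^{\dagger},\beta^{\dagger},v) = 0$. Subtracting the two affine evaluations gives
\[
J(q\beta,\beta,v) = J(q\beta^{\dagger},\beta^{\dagger},v) + (\beta - \beta^{\dagger})\, B(q,v) = (\beta - \beta^{\dagger})\, B(q,v) < 0,
\]
since $B(q,v) > 0$, so $(q\beta,\beta) \in \Omega^{\texttt{-}}$.

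I do not anticipate a serious technical obstacle; the only point requiring care is that the supremum in \eqref{eq:beta_star_orig} must be understood as taken over nontrivial $v$ only---otherwise the choice $v \equiv 0$ gives $J \equiv 0$ and $\beta^{\ast}(q) = +\infty$, contradicting the bound $\beta^{\ast}(q) \leq 1$ recorded just after \eqref{eq:beta_star_orig}. With that convention in place (and with $B(q,v) > 0$ guaranteed by nontriviality of $v$), the affine-slope argument above delivers both inclusions immediately.
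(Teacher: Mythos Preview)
Your proposal is correct and follows essentially the same approach as the paper: both arguments rest on the single observation that if $J(q\beta_0,\beta_0,v_0)=0$ for some nontrivial $v_0$, then $J(q\beta,\beta,v_0)<0$ for every $\beta<\beta_0$, which is precisely your affine monotonicity in $\beta$ with positive slope $B(q,v_0)$. The paper's proof is simply a terser version of yours---it states this monotonicity and then says the assertion follows directly from the definitions---whereas you spell out both inclusions separately and make explicit the (necessary) convention that the supremum in \eqref{eq:beta_star_orig} is taken over nontrivial $v$.
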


\begin{proof}
Indeed, if $\beta_0$ and $v_0$ are such that $J(q\beta_0,\beta_0,v_0) = 0$, then $J(q\beta,\beta,v_0) < 0$ for any $\beta < \beta_0$. The assertion thus follows directly from the definitions of $\Omega^{\texttt{-}}$ and $\beta^{\ast}$ (cf. \eqref{eq:omega_minus}, \eqref{eq:beta_star_orig}).
\end{proof}

By Lemma \ref{lemma:omega_minus}, we have converted the problem of describing $\Omega^{\texttt{-}}$ to the problem of describing~$\beta^{\ast}$.

\section{Connection of $\beta^{\ast}$ to the \fucik\ spectrum and another answer to~$Q_c$}
\label{sec:connection}

First of all, since 
$\int_{\R} (q(v^+)^2 + (v^-)^2) \d x \geq \int_{\R} v^2 \d x > 0$ for any nontrivial $v \in H^2(\R)$ and $q>1$, we can set
$$
S_q := \left\{ v \in H^2(\R): ~\int_{\R} (q(v^+)^2 + (v^-)^2) \d x = 1 \right\}
$$
and express
\begin{equation*}
 \beta^{\ast} (q) = \sup_{v\in H^2(\R)\setminus\{0\}} \frac{\int_{\R} (2(v')^2 - (v'')^2) \d x}{\int_{\R} (q(v^+)^2 + (v^-)^2) \d x} = \sup_{v \in S_q} \int_{\R} (2(v')^2 - (v'')^2) \d x.
\end{equation*}
For an arbitrary $r > 0$, let us now consider the space
$
H_0^2(-r,r) := \left\{v \in H^2(-r,r): ~v(\pm r) = v'(\pm r) = 0 \right\}
$
and define the function $\beta_r^{\ast}$ as
\begin{equation}
\label{eq:beta_star_a}
\beta_r^{\ast}(q) := \sup_{v \in S_{r,q}} \int_{-r}^r (2(v')^2 - (v'')^2) \d x,\quad q > 1,
\end{equation}
where the manifold $S_{r,q}$ is given by
$$
S_{r,q} := \left\{ v \in H_0^2(-r,r): ~\int_{-r}^{r} (q(v^+)^2 + (v^-)^2) \d x = 1 \right\}.
$$
Similarly as in the case of $\beta^{\ast}$, we easily obtain 
that $\beta^{\ast}_r(q) \geq \frac{9}{25}$ for any $r \geq \sqrt{5}\pi/2$ and $q > 1$
(we can choose the same $v_0 \in S_{r,q}$ as in the proof of Lemma \ref{lemma:9/25}), 
or that $\beta^{\ast}_r(q) > 0$ for any $r > \sqrt{2}\pi/2$ and $q > 1$
(for details and suitable choice of $v$, see Remark \ref{rem:eigen} in \ref{sec:eigen}). 
For these reasons, we limit our considerations to $r > \sqrt{2}\pi/2$ in the following assertions.

\begin{proposition}
\label{prop:beta_lim}
For any $q > 1$ fixed, we have
\begin{equation}
\label{eq:beta_lim}
\beta^{\ast}(q) = \sup_{r > \sqrt{2}\pi/2} \beta_r^{\ast}(q) = \lim_{r \to +\infty} \beta_r^{\ast}(q).
\end{equation}
\end{proposition}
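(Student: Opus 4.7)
The plan is to establish each equality in~\eqref{eq:beta_lim} separately. A zero-extension argument yields both the monotonicity giving $\sup_{r}=\lim_{r\to+\infty}$ and the upper bound $\lim_{r\to+\infty}\beta_r^{\ast}(q)\le\beta^{\ast}(q)$. Specifically, whenever $\sqrt{2}\pi/2<r<r'$, any $v\in H_0^2(-r,r)$ extends by zero to an element of $H_0^2(-r',r')$ and also to $H^2(\R)$; the regularity of the zero extension across $\pm r$ follows from $v$ and $v'$ vanishing there together with the embedding $H^2\hookrightarrow C^1$. Since none of the integrals appearing in~\eqref{eq:beta_star_a} or in the ratio representation of $\beta^{\ast}$ changes under zero extension, one obtains $\beta_r^{\ast}(q)\le\beta_{r'}^{\ast}(q)$ and $\beta_r^{\ast}(q)\le\beta^{\ast}(q)$ at once.

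For the reverse inequality $\beta^{\ast}(q)\le\lim_{r\to+\infty}\beta_r^{\ast}(q)$, I would fix $\epsilon>0$ and use the ratio form of $\beta^{\ast}(q)$ displayed just before the proposition statement to pick $v\in H^2(\R)\setminus\{0\}$ with $\int_{\R}(2(v')^2-(v'')^2)\,\d x\big/\int_{\R}(q(v^+)^2+(v^-)^2)\,\d x>\beta^{\ast}(q)-\epsilon$. Next I would truncate with a fixed cutoff $\chi\in C_c^{\infty}(\R)$ equal to $1$ on $[-1,1]$ and to $0$ outside $[-2,2]$, setting $v_R(x):=v(x)\chi(x/R)$; since $\chi(\cdot/R)$ and its first derivative vanish at $\pm 2R$, and $v_R,v_R'$ are continuous by Sobolev embedding, one has $v_R\in H_0^2(-2R,2R)$. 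A standard dominated-convergence computation, using $\|(\chi(\cdot/R))^{(k)}\|_{\infty}\le C R^{-k}$ for $k=1,2$, shows $v_R\to v$ in $H^2(\R)$ as $R\to+\infty$.

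The final step is continuity of the ratio along this $H^2$-convergence: the numerator is a continuous quadratic form in $v',v''\in L^2$, while for the denominator the pointwise inequality $|v_n^{\pm}-v^{\pm}|\le|v_n-v|$ passes $L^2$-convergence through the positive and negative-part maps. Consequently the ratio of $v_R$ exceeds $\beta^{\ast}(q)-\epsilon$ for all $R$ large enough, forcing $\beta_{2R}^{\ast}(q)>\beta^{\ast}(q)-\epsilon$; letting $\epsilon\to 0$ closes the gap. The main obstacle I anticipate is the bookkeeping in the truncation step, namely verifying that $v_R$ genuinely belongs to $H_0^2(-2R,2R)$ and that the $L^2$-continuity of the positive-part map propagates cleanly through the denominator; both points are standard but require careful statement rather than hand-waving.
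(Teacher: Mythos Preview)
Your proposal is correct and follows the same route as the paper: zero extension gives monotonicity in $r$ and the bound $\beta_r^{\ast}(q)\le\beta^{\ast}(q)$, while approximation of $H^2(\R)$-functions by compactly supported ones yields the reverse inequality. The paper's own proof is considerably more terse---it simply invokes the density of compactly supported functions in $H^2(\R)$ without writing out the cutoff construction or the continuity of the ratio---so your version actually fills in details that the paper leaves implicit.
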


\begin{proof}
Considering an arbitrary pair $\sqrt{2}\pi/2 < r_1 < r_2$, any $v \in H_0^2(-r_1,r_1)$ extended by zero to $(-r_2,r_2)$ belongs to $H_0^2(-r_2,r_2)$, similarly any $v \in S_{r_1,q}$ extended by zero to $(-r_2,r_2)$ belongs to $S_{r_2,q}$. Hence, we obtain $0< \beta_{r_1}^{\ast}(q) \leq \beta_{r_2}^{\ast}(q) \leq \beta^{\ast}(q)$.
Moreover, any $v\in H^2(\R)$ can be approximated by functions with compact support, i.e., by functions from $H_0^2(-r,r)$ with sufficiently large $r$.
Thus, we can conclude that \eqref{eq:beta_lim} holds true.
\end{proof}

For another fundamental characterization of $\beta^{\ast}$, let us consider the Dirichlet boundary value problem
\begin{equation}
\label{eq:fucik}
\left\{ \begin{array}{l}
v^{(4)} + 2v'' + \alpha v^+ - \beta v^- = 0, \quad x \in (-r,r),\\[3pt]
v(\pm r) = v'(\pm r) = 0,
\end{array} \right.
\end{equation}
and denote by $\Sigma^{\texttt{D}}_r$ the corresponding \fucik\ spectrum of \eqref{eq:fucik}, i.e.,
$$
\Sigma^{\texttt{D}}_r :=  
\left\{ (\alpha,\beta) \in \R^2: ~ \mbox{\eqref{eq:fucik} has a nontrivial solution} \right\}.
$$
\begin{remark}
Let us note that the linear term $v^{(4)} + 2v''$ in \eqref{eq:fucik} 
can be formally written as $(\partial^2 + 1)^2 v - v$ and the operator 
$(\partial^2 + 1)^2$ is sometimes called the (stationary) \emph{Swift-Hohenberg operator}. 
Its origin comes from the fourth order term in Swift--Hohenberg equation 
in \cite{Swift_1977} which was used to study hydrodynamic behaviour of fluids. 
Nowadays, it finds many other applications, e.g., in models of pattern formation.
\end{remark}
We claim the following.

\begin{proposition}
\label{prop:connection}
For any $r > \sqrt{2}\pi/2$ and $q > 1$, we have 
$$
\beta^{\ast}_r(q) = \max \left\{ \beta > 0: \, (q\beta,\beta) \in \Sigma^\textup{\texttt{D}}_r \right \}.
$$
Consequently,
\begin{equation}
\label{eq:beta_sup_max}
\beta^{\ast}(q) = \sup_{r > \sqrt{2}\pi/2} \max \left\{ \beta > 0: \, (q\beta,\beta) \in \Sigma^\textup{\texttt{D}}_r \right \}. 
\end{equation}
\end{proposition}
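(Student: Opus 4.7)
My plan is to realize $\beta^{\ast}_r(q)$ as the Lagrange multiplier for the constrained maximization that defines it, and to identify the resulting Euler-Lagrange equation with the \fucik\ eigenvalue problem \eqref{eq:fucik} along the ray $\alpha = q\beta$. The claim $\beta^{\ast}_r(q) = \max\{\beta>0 : (q\beta,\beta) \in \Sigma^{\texttt{D}}_r\}$ will then split naturally into two inclusions.

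First I would establish attainment of the supremum. Given a maximizing sequence $\{v_n\}\subset S_{r,q}$, the constraint together with $q > 1$ gives $\|v_n\|_{L^2} \leq 1$, and the Dirichlet interpolation bound $\|v_n'\|_{L^2}^2 \leq \|v_n\|_{L^2}\|v_n''\|_{L^2}$ (integration by parts plus Cauchy-Schwarz, using $v_n(\pm r)=v_n'(\pm r)=0$) turns the asymptotic condition $F(v_n):=\int_{-r}^r 2(v_n')^2-(v_n'')^2\d x \to \beta^{\ast}_r(q)$ into a quadratic inequality in $\|v_n''\|_{L^2}$ that yields a uniform $H^2$-bound. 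Banach-Alaoglu together with the compact embedding $H_0^2(-r,r)\hookrightarrow H^1(-r,r)$ then produce, along a subsequence, $v_n \rightharpoonup v^{\ast}$ in $H_0^2$ and $v_n \to v^{\ast}$ strongly in $H^1$, hence $v_n^{\pm}\to (v^{\ast})^{\pm}$ in $L^2$ and the constraint survives the limit, so $v^{\ast}\in S_{r,q}$ (in particular $v^{\ast}\neq 0$). Strong convergence of the $(v_n')^2$ integral combined with weak lower semicontinuity of $\|v_n''\|_{L^2}^2$ forces $F(v^{\ast}) \geq \beta^{\ast}_r(q)$, so $v^{\ast}$ is a maximizer.

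Next I would apply the Lagrange multiplier rule. The functionals $F$ and $G(v):=\int_{-r}^r q(v^+)^2 + (v^-)^2\d x$ are of class $C^1$ on $H_0^2(-r,r)$ (for $G$, because $t \mapsto (t^{\pm})^2$ is $C^1$ with Lipschitz derivative $2t^{\pm}$), and $G'(v^{\ast})[v^{\ast}] = 2 \neq 0$, so some $\lambda \in \R$ satisfies $F'(v^{\ast}) = \lambda G'(v^{\ast})$. Integration by parts rewrites this as
$$(v^{\ast})^{(4)} + 2(v^{\ast})'' + \lambda q (v^{\ast})^+ - \lambda (v^{\ast})^- = 0$$
distributionally, together with the Dirichlet conditions of \eqref{eq:fucik}. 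Testing with $v^{\ast}$ itself and using $G(v^{\ast})=1$ yields $\lambda = F(v^{\ast}) = \beta^{\ast}_r(q)$, so $(q\beta^{\ast}_r(q),\beta^{\ast}_r(q)) \in \Sigma^{\texttt{D}}_r$.

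For the reverse inclusion I would take any $(q\beta,\beta) \in \Sigma^{\texttt{D}}_r$ with nontrivial eigenfunction $w$; multiplying \eqref{eq:fucik} by $w$, integrating by parts using the Dirichlet data, and rescaling $w$ to lie in $S_{r,q}$ produces $\beta = F(\tilde w) \leq \beta^{\ast}_r(q)$. Together with the previous step this gives the first identity, and the formula for $\beta^{\ast}(q)$ drops out by taking $\sup_{r > \sqrt{2}\pi/2}$ and invoking Proposition~\ref{prop:beta_lim}. The hard part will be the attainment step: because $F$ is sign-indefinite, the $H^2$-bound on the maximizing sequence is not automatic and relies essentially on the Dirichlet interpolation inequality above, which is exactly why the restriction $r > \sqrt{2}\pi/2$ (forcing $\beta^{\ast}_r(q)>0$) enters the statement.
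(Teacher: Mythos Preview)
Your proposal is correct and follows essentially the same route as the paper: attainment of the supremum via an interpolation-type bound on $\|v_n''\|_{L^2}$ (the paper uses Young's inequality with parameter $p\in(0,1)$ where you use Cauchy--Schwarz, but both stem from $2\int (v')^2 = -2\int v''v$), followed by the Lagrange multiplier identification $\lambda = \beta_r^\ast(q)$ and the testing argument for the reverse inclusion. One small correction: the restriction $r > \sqrt{2}\pi/2$ is not needed for the $H^2$-bound on the maximizing sequence (your quadratic inequality in $\|v_n''\|_{L^2}$ gives boundedness regardless of the sign of $\beta_r^\ast(q)$); its role, as the paper notes after the proof, is only to ensure $\beta_r^\ast(q)>0$ so that the set $\{\beta>0:(q\beta,\beta)\in\Sigma_r^{\texttt{D}}\}$ is nonempty and the displayed identity makes sense.
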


\begin{proof}
First of all, we show that the supremum in \eqref{eq:beta_star_a} is really attained. Indeed, there must exist a sequence $(v_n)_{n=1}^{+\infty} \subset S_{r,q}$ such that $\int_{-r}^r (2(v_n')^2 - (v_n'')^2) \d x \to \beta_r^{\ast}(q)$. 
Using the fact that $\vertiii{v} = \left(\int_{-r}^r (v'')^2 \d x\right)^{1/2}$ is an equivalent norm on $H_0^2(-r,r)$, 
$\int_{-r}^r v^2 \d x \leq 1$ on $S_{r,q}$ and applying the estimate
$$
2\int_{-r}^r (v')^2 \d x = -2\int_{-r}^r v''v \d x \leq p^2 \int_{-r}^r (v'')^2 \d x + \frac{1}{p^2}  \int_{-r}^r v^2 \d x
\leq p^2 \int_{-r}^r (v'')^2 \d x + \frac{1}{p^2} 
$$
with $p \in (0,1)$,
we can write
$$
(1-p^2) \vertiii{v_n}^2 \leq \frac{1}{p^2}  - \int_{-r}^r (2(v_n')^2 - (v_n'')^2)
 \d x  \quad \to \quad \frac{1}{p^2} - \beta_r^{\ast}(q) \in \R.
$$
Hence $(v_n)$ is bounded in $H_0^2(-r,r)$. Due to the reflexivity of $H_0^2(-r,r)$ and its compact embedding into $H_0^1(-r,r)$, there exists a subsequence (denoted again by $(v_n)$) and $v \in H_0^2(-r,r)$ such that
$$
v_n \rightharpoonup v \quad \mbox{in } H_0^2(-r,r), \qquad v_n \to v \quad \mbox{in } L^2(-r,r), \qquad v'_n \to v' \quad \mbox{in } L^2(-r,r).
$$
This implies $\int_{-r}^r (q(v^+)^2 + (v^-)^2) \d x = \lim_{n \to +\infty} \int_{-r}^r (q(v^+_n)^2 + (v^-_n)^2) \d x = 1$, i.e., $v \in S_{r,q}$. Using the definition of $\beta_r^{\ast}(q)$ and the weak semicontinuity of the norm, we finally obtain
\begin{eqnarray*}
\beta_r^{\ast}(q) & \geq & \int_{-r}^r \left(2(v')^2 - (v'')^2\right) \d x = 2\|v'\|^2_{L^2} - \vertiii{v}^2 \geq 2 \lim\limits_{n \to +\infty} \|v'_n\|^2_{L^2} - \liminf\limits_{n \to +\infty} \vertiii{v_n}^2 \\
& = & \lim\limits_{n \to +\infty}  \int_{-r}^r \left(2(v_n')^2 - (v_n'')^2\right) \d x = \beta_r^{\ast}(q),
\end{eqnarray*}
that is, $\beta_r^{\ast}(q)$ is attained at $v \in S_{r,q}$.

Now, since $v \not\equiv 0$, we can apply the standard Lagrange multiplier method (cf. \cite[Theorem 7.8.2]{drabek_milota}) and conclude that there exist $\lambda \in \R$ such that
\begin{equation}
\label{eq:weak_fucik}
\int_{-r}^{r} \left(v''\varphi'' - 2v'\varphi' + \lambda(q v^+ - v^-)\varphi\right) \d x = 0 \quad \mbox{for all } \varphi \in H_0^2(-r,r).
\end{equation} 
Taking $\varphi = v$ in \eqref{eq:weak_fucik}, we get $\lambda = \beta_r^{\ast}(q)$. Moreover, \eqref{eq:weak_fucik} is a weak formulation of \eqref{eq:fucik} with $(\alpha,\beta) = (q\beta_r^{\ast}(q),\beta_r^{\ast}(q))$. Due to standard regularity arguments, the nontrivial weak solution of \eqref{eq:fucik} coincides with the classical one and $(q\beta_r^{\ast}(q),\beta_r^{\ast}(q)) \in \Sigma^{\texttt{D}}_r$.

Relation \eqref{eq:beta_sup_max} is then a direct consequence of Proposition \ref{prop:beta_lim}.
\end{proof}

Since our primary goal is the existence of travelling wave solutions of \eqref{eq:original_problem}, 
we assume $a > b > 0$ and thus, for $\alpha$ and $\beta$ given by \eqref{eq:alpha_beta},
we have $\alpha/\beta = q > 1$. However, Proposition \ref{prop:connection} could be formulated and proved to cover more general cases. First, it is not necessary to limit ourselves to $r > \sqrt{2}\pi/2$. 
But, if we consider the case of $0 < r \leq \sqrt{2}\pi/2$ then
we have to admit that
$\beta^{\ast}_r(q) = \max \left\{ \beta \in \R: \, (q\beta,\beta) \in \Sigma^{\texttt{D}}_r \right \}$
could be negative or zero.
Next, we can include also the case $q = 1$. 
This results in the following corollary.

\begin{corollary}
\label{cor:r=1}
For $q \to 1+$, we have
$\beta_r^{\ast}(q) \to \lambda_{1,r}^\textup{\texttt{D}}$ being the first (i.e., the largest) eigenvalue of 
\begin{equation}
\label{eq:eigen}
\left\{ \begin{array}{l}
v^{(4)} + 2v'' + \lambda v = 0, \quad x \in (-r,r),\\[3pt]
v(\pm r) = v'(\pm r) = 0. 
\end{array} \right.
\end{equation}
\end{corollary}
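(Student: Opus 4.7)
The plan is to squeeze $\beta_r^{\ast}(q)$ between $\lambda_{1,r}^{\texttt{D}}/q$ and $\lambda_{1,r}^{\texttt{D}}$ by exploiting the variational characterization established just before Proposition \ref{prop:connection}. Recall that
$$
\beta_r^{\ast}(q) = \sup_{v \in H_0^2(-r,r)\setminus\{0\}} \frac{\int_{-r}^r (2(v')^2 - (v'')^2)\,\dd x}{\int_{-r}^r (q(v^+)^2 + (v^-)^2)\,\dd x},
$$
while, by the standard Rayleigh-quotient characterization applied to the self-adjoint eigenvalue problem \eqref{eq:eigen}, the largest eigenvalue is
$$
\lambda_{1,r}^{\texttt{D}} = \sup_{v \in H_0^2(-r,r)\setminus\{0\}} \frac{\int_{-r}^r (2(v')^2 - (v'')^2)\,\dd x}{\int_{-r}^r v^2\,\dd x}.
$$

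The key elementary observation is that $(v^+)^2 + (v^-)^2 = v^2$, so for every $q > 1$ and every $v$ one has the pointwise sandwich
$$
v^2 \le q(v^+)^2 + (v^-)^2 = v^2 + (q-1)(v^+)^2 \le q\,v^2.
$$
Integrating, the denominator in the definition of $\beta_r^{\ast}(q)$ lies between $\int v^2$ and $q\int v^2$. Consequently, on the set of $v$ with positive numerator $\int(2(v')^2-(v'')^2)>0$, the Rayleigh quotient $R_q(v)$ appearing in $\beta_r^{\ast}(q)$ and the quotient $R_1(v)$ appearing in $\lambda_{1,r}^{\texttt{D}}$ satisfy $\tfrac{1}{q}R_1(v) \le R_q(v) \le R_1(v)$.

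To finish, I would pass to suprema. For the upper bound, any $v$ producing a positive value $R_q(v)$ must have positive numerator (the denominator is always strictly positive), so $R_q(v) \le R_1(v) \le \lambda_{1,r}^{\texttt{D}}$; taking the sup gives $\beta_r^{\ast}(q) \le \lambda_{1,r}^{\texttt{D}}$. For the lower bound, I would plug in $v = v_1$, the maximizer of $R_1$ realizing $\lambda_{1,r}^{\texttt{D}}$ (whose existence follows from the same compactness argument as in the proof of Proposition \ref{prop:connection}), and obtain $\beta_r^{\ast}(q) \ge R_q(v_1) \ge \tfrac{1}{q}\lambda_{1,r}^{\texttt{D}}$. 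Combining these,
$$
\tfrac{1}{q}\,\lambda_{1,r}^{\texttt{D}} \le \beta_r^{\ast}(q) \le \lambda_{1,r}^{\texttt{D}},
$$
and letting $q\to 1+$ yields the claim.

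There is no substantial obstacle here; the only point requiring care is the direction of the inequality when taking the sup, which is handled by the assumption $r > \sqrt{2}\pi/2$ (ensuring $\lambda_{1,r}^{\texttt{D}}>0$, so both suprema are attained on functions with positive numerator) that is already built into the setup of this section.
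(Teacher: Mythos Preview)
Your argument is correct. The sandwich $\tfrac{1}{q}\lambda_{1,r}^{\texttt{D}} \le \beta_r^{\ast}(q) \le \lambda_{1,r}^{\texttt{D}}$ follows cleanly from the pointwise bound $v^2 \le q(v^+)^2 + (v^-)^2 \le q v^2$ together with the positivity of the numerator at the relevant test functions (guaranteed by $r > \sqrt{2}\pi/2$, as you note), and the squeeze settles the limit.

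The paper takes a different route: it does not prove the corollary directly but presents it as the $q=1$ endpoint of Proposition~\ref{prop:connection}. There, $\beta_r^{\ast}(q)$ is identified with $\max\{\beta>0:(q\beta,\beta)\in\Sigma_r^{\texttt{D}}\}$, and for $q=1$ the diagonal of the \fucik\ spectrum is exactly the eigenvalue set of \eqref{eq:eigen}, so $\beta_r^{\ast}(1)=\lambda_{1,r}^{\texttt{D}}$. The passage from $q=1$ to $q\to 1+$ is left implicit. Your approach is more self-contained and actually supplies the missing continuity explicitly, without invoking the \fucik\ spectrum at all. It is also worth noting that the same Rayleigh-quotient comparison you use here reappears, in a slightly sharper form, in the paper's proof of Lemma~\ref{lemma:nonempty}, so your method is very much in the spirit of the surrounding arguments.
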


A detailed description of $\lambda_{1,r}^{\texttt{D}}$ can be found in \ref{sec:eigen}. 
As we have stated above, $\frac{9}{25} \leq \beta^{\ast}(q) \leq 1$ for any $q > 1$.
Moreover, the following holds true.

\begin{lemma}
\label{lemma:nonempty}
For any $q > 1$ we have $ \beta^{\ast}(q) > 1/q $.
\end{lemma}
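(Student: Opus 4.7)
The plan is to rewrite $\beta^{\ast}(q) > 1/q$ as a test-function condition and then exhibit a concrete oscillatory truncation. From the Rayleigh-quotient description of $\beta^{\ast}(q)$ established just before Proposition \ref{prop:connection}, the inequality $\beta^{\ast}(q) > 1/q$ holds if and only if there exists $v \in H^2(\R)\setminus\{0\}$ with $J(1, 1/q, v) < 0$. Using $(v^+)^2 = v^2 - (v^-)^2$ together with the integration-by-parts identity $\int_{\R} ((v'')^2 - 2(v')^2 + v^2)\, \d x = \int_{\R} (v'' + v)^2\, \d x$ (valid for $v \in H^2(\R)$), I would first recast the quantity as
$$
J(1, 1/q, v) \;=\; \int_{\R} (v'' + v)^2\, \d x \;-\; \frac{q-1}{q}\int_{\R} (v^-)^2\, \d x.
$$
Since $q > 1$, the coefficient of the second term is positive, so the task reduces to finding a $v$ whose first integral is much smaller than the second.

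The natural candidate is a spatial cutoff of the (non-$H^2$) zero mode of $\partial_x^2 + 1$: set $v_R(x) := \cos(x)\,\phi(x/R)$, where $\phi \in C_c^{\infty}(\R)$ is a fixed nonnegative bump and $R>0$ is a large parameter to be chosen. Direct differentiation yields
$$
v_R''(x) + v_R(x) \;=\; -\frac{2}{R}\sin(x)\,\phi'(x/R) + \frac{1}{R^2}\cos(x)\,\phi''(x/R),
$$
and the elementary bound $(a+b)^2 \leq 2a^2 + 2b^2$ combined with the substitution $y = x/R$ gives $\int_{\R} (v_R'' + v_R)^2\, \d x = O(1/R)$. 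For the other term, since $\phi \geq 0$ we have $v_R^-(x) = -\cos(x)\,\phi(x/R)\,\mathbf{1}_{\{\cos(x)<0\}}$, hence
$$
\int_{\R} (v_R^-)^2\, \d x \;=\; \int_{\R} g(x)\,\phi(x/R)^2\, \d x,
$$
where $g(x) = \cos^2(x)\,\mathbf{1}_{\{\cos(x)<0\}}$ is $2\pi$-periodic with mean $1/4$. The standard averaging principle (a Riemann-sum-type computation after rescaling) shows that this integral is asymptotic to $\tfrac{R}{4}\|\phi\|_{L^2}^2$ as $R \to +\infty$, so it is bounded below by a positive multiple of $R$ for all large $R$.

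Combining the two estimates, $J(1, 1/q, v_R) \leq C_1/R - C_2\,(q-1)R/q$, which is strictly negative once $R$ is large enough, yielding the desired test function and hence $\beta^{\ast}(q) > 1/q$. The only delicate point is the oscillatory lower bound on $\int_{\R} (v_R^-)^2\, \d x$, which secures the linear growth of the negative-part mass of $v_R$; however, this is a routine Riemann-sum argument on a periodic function weighted by a compactly supported profile and poses no conceptual difficulty. Conceptually the test function captures the fact that the upper bound $\beta^{\ast}(q) \leq 1$ from \eqref{mrce1} becomes strict as soon as $(q-1)/q>0$ is available to dominate the $O(1/R)$ defect of $\cos(x)\phi(x/R)$ from the formal kernel of $\partial_x^2+1$.
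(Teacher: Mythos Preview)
Your argument is correct and takes a genuinely different route from the paper's proof. The paper works on the finite interval $(-r,r)$ and plugs the first Dirichlet eigenfunction $v_{1,r}$ (with the sign chosen so that $\int (v_{1,r}^{-})^2 \ge \tfrac12\int v_{1,r}^2$) into the Rayleigh quotient $R_{q,r}$; a short computation then gives $\beta_r^{\ast}(q) \ge \lambda^{\texttt{D}}_{1,r}\big/\big(q-\tfrac{q-1}{2}\big)$, and letting $r\to+\infty$ together with the appendix fact $\lambda^{\texttt{D}}_{1,r}\to 1$ yields the quantitative estimate $\beta^{\ast}(q)\ge \tfrac{2}{q+1} > 1/q$. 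Your proof is more self-contained: the identity $J(1,1/q,v)=\int_{\R}(v''+v)^2\,\d x - \tfrac{q-1}{q}\int_{\R}(v^-)^2\,\d x$ isolates the Swift--Hohenberg kernel explicitly, and the truncated cosine $v_R=\cos(x)\,\phi(x/R)$ makes the first term $O(1/R)$ while the second grows like $R$, without invoking the Dirichlet spectral theory developed in Proposition~\ref{prop:connection} and \ref{sec:eigen}. Both arguments ultimately exploit that $\cos x$ is a formal zero mode of $\partial_x^2+1$ approximable in $H^2(\R)$; the paper's version buys the clean closed-form lower bound $\tfrac{2}{q+1}$, whereas yours buys independence from the eigenvalue machinery and would work verbatim even if the Dirichlet analysis were unavailable.
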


\begin{proof}
For simplicity, let us denote 
$$
R_{q,r}(v) := \frac{\int_{-r}^{r}(2(v')^2 - (v'')^2) \d x}{\int_{-r}^r (q(v^+)^2 + (v^-)^2) \d x}.
$$
Due to Proposition \ref{prop:connection} and \eqref{eq:beta_star_a}, we can write
$$
\beta^{\ast}_r(q) = \max_{v \in H^2_0(-r,r) \setminus\{0\}} R_{q,r}(v) = R_{q,r}(v_{q,r}) \geq R_{q,r}(v_{1,r}),
$$
where $v_{q,r}$ solves \eqref{eq:fucik} with 
$(\alpha,\beta) = (q\beta^{\ast}_r(q), \beta^{\ast}_r(q))$ and 
$v_{1,r}$ is the eigenfunction of \eqref{eq:eigen} corresponding 
to the first (largest) eigenvalue $\lambda^{\texttt{D}}_{1,r}$ such that 
$\int_{-r}^r (v^-_{1,r})^2 \d x \geq \frac{1}{2} \int_{-r}^r v^2_{1,r} \d x$. 
Notice that for any function $v$, $v^- = (-v)^+$ and 
$\int_{-r}^r (v^+)^2 \d x + \int_{-r}^r (v^-)^2 \d x = \int_{-r}^r v^2 \d x$, 
that is either $v_{1,r}$ or $-v_{1,r}$ must fulfill our requirements.

Trivially, using the fact that $R_{1,r}(v_{1,r}) = \lambda^{\texttt{D}}_{1,r} > 0$ for $r > \sqrt{2}\pi/2$, we have
$$
R_{q,r}(v_{1,r}) = R_{1,r}(v_{1,r}) \frac{\int_{-r}^r v^2_{1,r} \d x}{\int_{-r}^r (q(v^+_{1,r})^2 + (v^-_{1,r})^2) \d x} = 
\lambda^{\texttt{D}}_{1,r} \left({q - (q-1)\frac{\int_{-r}^r (v^-_{1,r})^2 \d x}{\int_{-r}^r (v_{1,r})^2\d x}}\right)^{-1} 
\geq  \frac{\lambda^{\texttt{D}}_{1,r}}{q - \frac{q-1}{2}}.
$$
That is, for any $r > \sqrt{2}\pi/2$ and $q > 1$, $\beta^{\ast}_r(q) \geq {\lambda^{\texttt{D}}_{1,r}}/(q - \frac{q-1}{2})$. Passing to the limit for $r \to + \infty$,  we obtain $\lambda^{\texttt{D}}_{1,r} \to 1$ (cf. \ref{sec:eigen}) and hence
$\beta^{\ast}(q) \geq 1/(q - \frac{q-1}{2}) > 1/q$, what we wanted to prove.
\end{proof}

Proposition \ref{prop:connection} says, in other words, that $(q\beta_r^{\ast}(q),\beta_r^{\ast}(q))$ 
coincides with the ``highest point'' in the intersection of the \fucik\ spectrum $\Sigma^{\texttt{D}}_r$ 
of \eqref{eq:fucik} and the ray $\alpha = q\beta$. 
Due to Lemma \ref{lemma:nonempty}, $q\beta_r^{\ast}(q) > 1$, i.e., 
$\Omega^{\texttt{-}}$ is nonempty on any ray $\alpha = q\beta$, $q > 1$.
If we further introduce 
$\Omega^{\texttt{+}}  := \left\{(q\beta, \beta)\in\Omega: q > 1,\ \beta > \beta^{\ast}(q)\right\}$ and
\begin{equation}
\label{eq:envelope}
\mathscr{E} := \left\{(q\beta^{\ast}(q),\beta^{\ast}(q)): \, q > 1 \right\},
\end{equation}
then $\mathscr{E}$ can be understood as the \emph{envelope} of the collection
of all \fucik\ spectra $\Sigma^{\texttt{D}}_r$ with $r > \sqrt{2}\pi/2$ and 
represents the border line under which $J(\alpha,\beta,v)$ possesses negative
values for some $v \in H^2(\R)$.

Using the above results, we are now ready to state our main theorem providing
another version of the answer to $Q_c$. It is visualized in Figures \ref{fig:main} and \ref{fig:main_2} and reads as follows.

\begin{figure}[h]
\centerline{
  \setlength{\unitlength}{1mm}
  \begin{picture}(66, 61)(-3,-1)
    \put(0,0){\includegraphics[height=6.0cm]{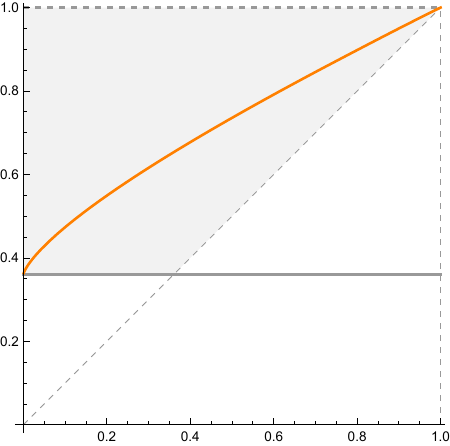}}
    \put(52,-3){\makebox(0,0)[lb]{\footnotesize$1/q$}}         
    \put(-3,52){\makebox(0,0)[lb]{\footnotesize$\beta$}}        
    \put(31,52){\makebox(0,0)[lb]{\footnotesize$\beta = \beta^{*}(q)$}}            
    \put(31,27){\makebox(0,0)[lb]{\footnotesize $\beta = 1/q$}}                
    \put(31,18){\makebox(0,0)[lb]{\footnotesize $\beta = \frac{9}{25}$}}                    
  \end{picture}    
\hspace{12pt}  
  \begin{picture}(73, 61)(-7, -1)
    \put(0,0){\includegraphics[height=6.0cm]{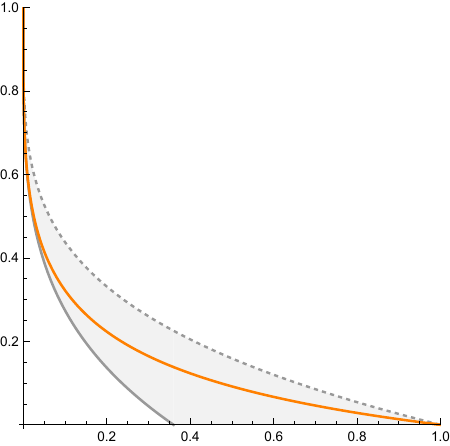}}
    \put(52,-3){\makebox(0,0)[lb]{\footnotesize$b/a$}}         
    \put(-7,50){\makebox(0,0)[lb]{\footnotesize$\cfrac{l}{\sqrt[4]{4a}}$}}             
  \end{picture}    
  }
\caption{
The graph of $\beta^{*}$ (orange curve, left) located in the gray area 
and the dependence of the length $l$ of the interval in \eqref{eq:c}
on $a$ and $b$ (orange curve, right).
Both gray curves (right) represent cases when we replace 
the function $\beta^{*}$ in \eqref{eq:c} by a constant:
$\frac{9}{25}$ (gray solid curve, right), $1$ (gray dashed curve, right).
}
\label{fig:main_2}
\end{figure}

\begin{theorem}
\label{th:main2}
Let $a > b > 0$ be arbitrary but fixed and let $\beta^{\ast}$ be given by \eqref{eq:beta_sup_max}.
Then the travelling wave solution of \eqref{eq:original_problem} exists for any wave speed $c$ satisfying
\begin{equation}
\label{eq:c}
|c| \in \left( \sqrt[4]{\frac{4b}{\beta^{\ast}(a/b)}}, \sqrt[4]{4a} \right).
\end{equation}
\end{theorem}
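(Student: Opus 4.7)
The statement is essentially a reformulation of Theorem~\ref{th:main1} using the characterization of $\Omega^{\texttt{-}}$ developed in Sections~\ref{sec:omega_minus} and~\ref{sec:connection}, so the plan is simply to unpack the relevant definitions and chain the preceding results together.

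First, since $a>b>0$ are fixed, I set $q:=a/b>1$ and, for a candidate wave speed $c\neq 0$, put $\alpha:=4a/c^4$ and $\beta:=4b/c^4$ as in \eqref{eq:alpha_beta}. By construction $\alpha/\beta=q$, so the pair $(\alpha,\beta)$ lives on the ray $\alpha=q\beta$. By Theorem~\ref{th:main1} it suffices to guarantee $(\alpha,\beta)\in\Omega^{\texttt{-}}$, and by Lemma~\ref{lemma:omega_minus} this is equivalent to the two conditions $(\alpha,\beta)\in\Omega$ and $\beta<\beta^{\ast}(q)$.

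Next I translate these two conditions back into restrictions on $|c|$. The membership $(\alpha,\beta)\in\Omega$ means $\alpha>1$ and $0<\beta<1$, i.e.\ $|c|<\sqrt[4]{4a}$ together with $|c|>\sqrt[4]{4b}$ (recall $a>b$). The strict inequality $\beta<\beta^{\ast}(q)$ reads $4b/c^4<\beta^{\ast}(a/b)$, which is exactly
\[
|c|>\sqrt[4]{\frac{4b}{\beta^{\ast}(a/b)}}.
\]
Finally, I check that this lower bound absorbs the necessary condition $|c|>\sqrt[4]{4b}$: from Remark~\ref{rem:omega}(2) one has $\beta^{\ast}(q)\le 1$ for all $q>1$ (otherwise $J(q\beta,\beta,v)<0$ would be achievable with $\beta\ge 1$, contradicting \eqref{mrce1}), hence $\sqrt[4]{4b/\beta^{\ast}(a/b)}\ge\sqrt[4]{4b}$, and the interval \eqref{eq:c} is a subset of the necessary interval \eqref{eq:necessary_con}. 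Combining everything, any $|c|$ in \eqref{eq:c} yields $(\alpha,\beta)\in\Omega^{\texttt{-}}$ and Theorem~\ref{th:main1} provides the desired travelling wave solution.

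There is essentially no obstacle left at this stage: all the analytic work (mountain pass geometry, attainment of the supremum in \eqref{eq:beta_star_a}, the limit in Proposition~\ref{prop:beta_lim}, and the nondegeneracy $\beta^{\ast}(q)>1/q$ from Lemma~\ref{lemma:nonempty}, which incidentally guarantees that the interval in \eqref{eq:c} is nonempty and lies strictly above $\sqrt[4]{4b}$) is already packaged in the preceding results. The only care needed is to track the change of variables $\alpha=4a/c^4$, $\beta=4b/c^4$ carefully and to verify the endpoint inequalities. The deeper content of the theorem lies in the earlier propositions; this statement is their clean geometric consequence.
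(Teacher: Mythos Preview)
Your proof is correct and follows essentially the same route as the paper: set $q=a/b$, $\alpha=4a/c^4$, $\beta=4b/c^4$, use Lemma~\ref{lemma:omega_minus} to reduce $(\alpha,\beta)\in\Omega^{\texttt{-}}$ to the inequalities $\alpha>1$ and $\beta<\beta^{\ast}(q)$, translate these back to the interval \eqref{eq:c}, and invoke Theorem~\ref{th:main1}. You are slightly more explicit than the paper in checking that $\beta^{\ast}(q)\le 1$ absorbs the condition $\beta<1$, and in noting that Lemma~\ref{lemma:nonempty} ensures the interval is nonempty, but these are exactly the ingredients the paper uses as well.
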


\begin{proof}
Due to Lemma \ref{lemma:nonempty}, $\beta^{\ast}(a/b) > b/a$ and the interval in \eqref{eq:c} is nonempty.
Now, for any $c$ satisfying \eqref{eq:c}, we have
$$
\alpha = \frac{4a}{c^4} 
> 1 \quad \mbox{and} \quad 
\beta = \frac{4b}{c^4} 
< \beta^{\ast}(a/b).
$$
That is, denoting $q=a/b$, we can write
$
\left({4a}/{c^4},{4b}/{c^4}\right) = (\alpha,\beta) = (q\beta,\beta) \in \Omega^{\texttt{-}}
$
and the proved assertion follows directly from Theorem \ref{th:main1}.
\end{proof}


\section{Analytical approximation of $\beta^{\ast}$}
\label{sec:approx1}

In spite of the characterization \eqref{eq:beta_sup_max} of $\beta^{\ast}$ provided
by Proposition \ref{prop:connection}, it is not easy to compute the precise value
of $\beta^{\ast}(q)$ for a given $q > 1$, since we do not have available 
any analytical description of the \fucik~spectrum $\Sigma^{\texttt{D}}_{r}$. 
In this section, we present one possible way how to analytically 
construct lower bounds for $\beta^{\ast}$ using both its characterizations 
\eqref{eq:beta_star_orig} and \eqref{eq:beta_sup_max}. In particular, we 
construct a function $\mathcal{B} = \mathcal{B}(q)$ for $q > 1$ such that  
$\mathcal{B}(q) \le \beta^{\ast}(q)$ and $\beta^{\ast}(q) - \mathcal{B}(q)$ 
is small as much as possible.
Let us recall that for any $q > 1$, we have 
$\frac{9}{25} \le \beta^{\ast}(q) \le 1$ and $1/q < \beta^{\ast}(q)$ 
due to Lemmas \ref{lemma:9/25} and~\ref{lemma:nonempty}. Thus, we want to
achieve $\mathcal{B}(q) > \min\left\{\frac{9}{25}, 1/q\right\}$, i.e., the graph of 
$\mathcal{B}$ will be in the gray area in Figure \ref{fig:main_2}, left.

Similarly as in the proof of Lemma \ref{lemma:9/25}, 
we construct a function $v\in H^{2}(\R)$ such that
$J(\alpha,\beta,v) < 0$ with $(\alpha,\beta)$ close as much as possible
to the set $\mathscr{E}$ defined in \eqref{eq:envelope}.
Since $\mathscr{E}$ is the envelope 
of the family of all \fucik\ spectra $\Sigma^{\texttt{D}}_{r}$ of the problem \eqref{eq:fucik},
it is natural to look for inspiration directly in numerical solutions of the problem \eqref{eq:fucik}.

\begin{figure}[h]
\centerline{
  \setlength{\unitlength}{1mm}
  \begin{picture}(146, 89)(-6, -4)
    \put(0,0){\includegraphics[width=13.8cm]{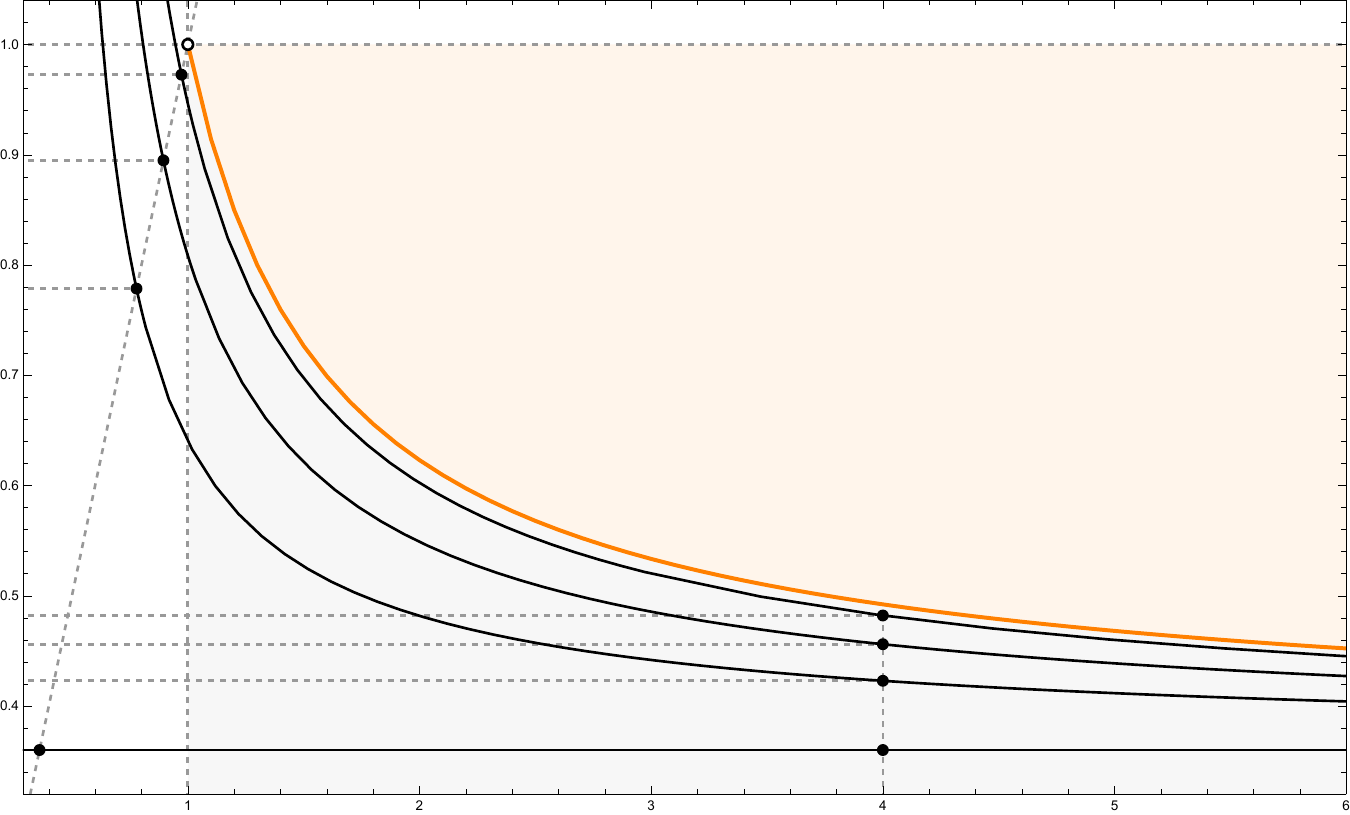}}
    \put(101,-3){\makebox(0,0)[lb]{$\alpha$}}         
    \put(-4,38.5){\makebox(0,0)[lb]{$\beta$}}             
    \put(49,34){\makebox(0,0)[lb]{$\mathscr{E}$}}        
    \put(5.3,34.0){\colorbox{White}{\makebox(7,1)[cm]{\footnotesize $\alpha = \beta$}}}            
    \put(-6,4.5){\makebox(0,0)[lb]{\footnotesize$\lambda^{\texttt{D}}_{1,r_{1}}$}}                          
    \put(-6,52.0){\makebox(0,0)[lb]{\footnotesize$\lambda^{\texttt{D}}_{1,r_{2}}$}}                          
    \put(-6,65.5){\makebox(0,0)[lb]{\footnotesize$\lambda^{\texttt{D}}_{1,r_{3}}$}}                          
    \put(-6,74.5){\makebox(0,0)[lb]{\footnotesize$\lambda^{\texttt{D}}_{1,r_{6}}$}}                                      
    \put(25.0,7.5){\makebox(0,0)[lb]{\footnotesize$\gamma^{\texttt{D}\texttt{-}}_{1,r_{1}}$}}                      
    \put(25.0,22.5){\makebox(0,0)[lb]{\footnotesize$\gamma^{\texttt{D}\texttt{+}}_{1,r_{2}}$}}                  
    \put(33.5,26.5){\makebox(0,0)[lb]{\footnotesize$\gamma^{\texttt{D}\texttt{-}}_{1,r_{3}}$}}                  
    \put(40.0,29.5){\makebox(0,0)[lb]{\footnotesize$\gamma^{\texttt{D}\texttt{+}}_{1,r_{6}}$}}                      
    \put(65,51){\makebox(3.3,2.2)[lb]{$\Omega^{\texttt{+}}$}}             
    \put(66,9){\makebox(3.3,2.2)[lb]{$\Omega^{\texttt{-}}$}}                 
  \end{picture}    
  }
\caption{
Four \fucik~curves 
$\gamma^{\texttt{D}\texttt{-}}_{1,r_{1}}$,
$\gamma^{\texttt{D}\texttt{+}}_{1,r_{2}}$,
$\gamma^{\texttt{D}\texttt{-}}_{1,r_{3}}$ and
$\gamma^{\texttt{D}\texttt{+}}_{1,r_{6}}$
that are parts of the \fucik~spectra $\Sigma^{\texttt{D}}_{r_{n}}$, $n=1,2,3,6$, 
and emanate from points 
$\big(\lambda^{\texttt{D}}_{1,r_{n}}, \lambda^{\texttt{D}}_{1,r_{n}}\big)$
on the diagonal $\alpha = \beta$,
where
$r_{1} = \frac{\pi}{2}\sqrt{5}$,
$r_{2} = \frac{\pi}{2}\sqrt{17}$,
$r_{3} = \frac{\pi}{2}\sqrt{37}$ and
$r_{6} = \frac{\pi}{2}\sqrt{145}$.}
\label{obr6}
\end{figure}

Using continuation method, we find numerically curves that belong
to $\Sigma_{r}^{\texttt{D}}$ and emanate from the diagonal $\alpha=\beta$
(see Figure \ref{obr6} for curves $\gamma_{1,r}^{\texttt{D\textpm}}$).
We proceed in the following way.
For fixed $r = r_{k} := \frac{\pi}{2}\sqrt{4k^2+1}$, $k\in\N$,
we start on the diagonal $\alpha=\beta=\lambda^{\texttt{D}}_{1,r_{k}}=(4k^{2} - 1)^2/(4k^2 + 1)^2$
with the solution of the problem \eqref{eq:fucik} 
in the form of $\pm v^{\texttt{D}}_{1,r_{k}}$, where the eigenfunction $v^{\texttt{D}}_{1,r_{k}}$ is 
given in \eqref{eq:eigenfunction} (see Figure \ref{obr19}).
Then we increase the value of $\alpha$ and consider the following initial value problem 
with $\delta\in\R$ as a parameter
\begin{equation}
\label{ivp}
\left\{ \begin{array}{l}
v^{(4)} + 2v'' + \alpha v^+ - \beta v^- = 0, \quad x \in (-r,r),\\[3pt]
v(0) = \pm 1,\
v'(0) = 0,\
v''(0) = \delta,\
v'''(0) = 0.
\end{array} \right.
\end{equation}
Now, using the shooting method, we find $\beta$ and $\delta$ such that the solution $v$ of \eqref{ivp}
satisfies also the Dirichlet boundary conditions in \eqref{eq:fucik}.
Following these steps, it is possible to find curves 
$\gamma_{1,r_{2}}^{\texttt{D+}}$, 
$\gamma_{1,r_{3}}^{\texttt{D-}}$,
$\gamma_{1,r_{6}}^{\texttt{D+}}$
as in Figure~\ref{obr6}, where $r_{2} < r_{3} < r_{6}$. 
See also Figures~\ref{obr21} and \ref{obr3}
for the corresponding nontrivial solutions 
$v_{1,r_{2}}^{\texttt{D+}}$, 
$v_{1,r_{3}}^{\texttt{D-}}$,
$v_{1,r_{6}}^{\texttt{D+}}$
of the Dirichlet problem \eqref{eq:fucik} for $\alpha = 4$.
Let us note that we have  
$v_{1,r_{k}}^{\texttt{D\textpm}}(0) = \pm 1$.
We can observe that for increasing $k$, 
the number of positive and negative semi-waves of $v_{1,r_{k}}^{\texttt{D\textpm}}$ 
on the interval $(-r_{k}, r_{k})$ increases 
and the corresponding
Fučík curves $\gamma_{1,r_{k}}^{\texttt{D\textpm}}$ are getting closer to $\mathscr{E}$ 
(see Figure \ref{obr6}).

\begin{figure}[t]
\centerline{
  \setlength{\unitlength}{1mm}
  \begin{picture}(44, 32)(0, 0)
    \put(0,0){\includegraphics[width=4.5cm]{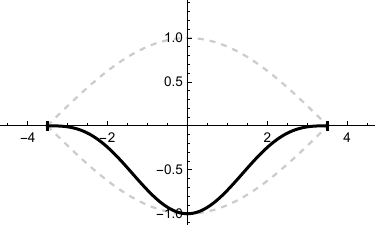}}
    \put(1,28){\makebox(0,0)[lb]{\footnotesize$(\alpha,\beta)\in \gamma_{1,r_{1}}^{\texttt{D-}}$}}    
    \put(1,24.5){\makebox(0,0)[lb]{\footnotesize$\alpha = 4$}}
    \put(1,19.0){\makebox(0,0)[lb]{\footnotesize$\beta = \tfrac{9}{25}$}}
    \put(43,9.0){\makebox(0,0)[lb]{\footnotesize$x$}}  
    \put(1.5,14.0){\makebox(0,0)[lb]{\footnotesize$-r_{1}$}}            
    \put(38.5,14.0){\makebox(0,0)[lb]{\footnotesize$r_{1}$}}           
    \put(24.0,23.5){\makebox(0,0)[lb]{\footnotesize$y = v_{1,r_{1}}^{\texttt{D-}}(x)$}}            
  \end{picture}    
  \hspace{6pt}
  \begin{picture}(44, 32)(0, 0)
    \put(0,0){\includegraphics[width=4.5cm]{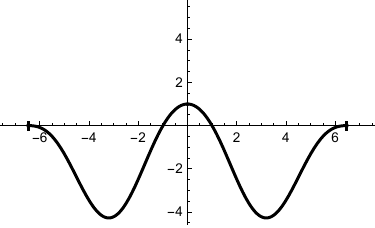}}
    \put(1,28){\makebox(0,0)[lb]{\footnotesize$(\alpha,\beta)\in \gamma_{1,r_{2}}^{\texttt{D+}}$}}
    \put(1,24.5){\makebox(0,0)[lb]{\footnotesize$\alpha = 4$}}    
    \put(1,19.5){\makebox(0,0)[lb]{\footnotesize$\beta \doteq 0.42278$}}
    \put(43,9.0){\makebox(0,0)[lb]{\footnotesize$x$}}  
    \put(0.0,14.0){\makebox(0,0)[lb]{\footnotesize$-r_{2}$}}            
    \put(40.5,14.0){\makebox(0,0)[lb]{\footnotesize$r_{2}$}}           
    \put(24.0,23.5){\makebox(0,0)[lb]{\footnotesize$y = v_{1,r_{2}}^{\texttt{D+}}(x)$}}            
  \end{picture}    
  \hspace{6pt}  
  \begin{picture}(44, 32)(0, 0)
    \put(0,0){\includegraphics[width=4.5cm]{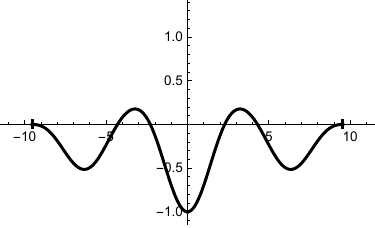}}
    \put(1,28){\makebox(0,0)[lb]{\footnotesize$(\alpha,\beta)\in \gamma_{1,r_{3}}^{\texttt{D-}}$}}
    \put(1,24.5){\makebox(0,0)[lb]{\footnotesize$\alpha = 4$}}
    \put(1,19.5){\makebox(0,0)[lb]{\footnotesize$\beta \doteq 0.45597$}}    
    \put(43,9.0){\makebox(0,0)[lb]{\footnotesize$x$}}  
    \put(0.5,14.0){\makebox(0,0)[lb]{\footnotesize$-r_{3}$}}            
    \put(40.0,14.0){\makebox(0,0)[lb]{\footnotesize$r_{3}$}}  
    \put(24.0,23.5){\makebox(0,0)[lb]{\footnotesize$y = v_{1,r_{3}}^{\texttt{D-}}(x)$}}            
  \end{picture}    
  }
\caption{The \fucik~eigenfunctions 
$v^{\texttt{D-}}_{1,r_{1}}$,
$v^{\texttt{D+}}_{1,r_{2}}$ and 
$v^{\texttt{D-}}_{1,r_{3}}$
of the Dirichlet problem \eqref{eq:fucik}
for $\alpha = 4$ and $\beta\in(0,1)$, 
where 
$r_{1}=\frac{\pi}{2}\sqrt{5}$,
$r_{2}=\frac{\pi}{2}\sqrt{17}$, 
$r_{3}=\frac{\pi}{2}\sqrt{37}$.
}
\label{obr21}
\end{figure}

As we have stated above, our plan is to analytically construct a function $v\in H^{2}(\R)$ such that
$J(\alpha,\beta,v) < 0$ with $(\alpha,\beta)$ close to $\mathscr{E}$.
One possible way is to take $v$ consisting of damped positive and negative
semi-waves represented by fourth order polynomials 
(see Figure~\ref{obr1} for the graph of the analytically constructed function $v_{3}$ 
and compare it with the graph of the \fucik~eigenfunction $v_{1,r_{6}}^{\texttt{D+}}$
in Figure \ref{obr3} obtained numerically). 
This choice leads to the following result.

\begin{figure}[h]
\centerline{
  \setlength{\unitlength}{1mm}
\begin{minipage}{14cm}
  \begin{picture}(140, 36)(0, 0)
    \put(0,0){\includegraphics[width=14.0cm]{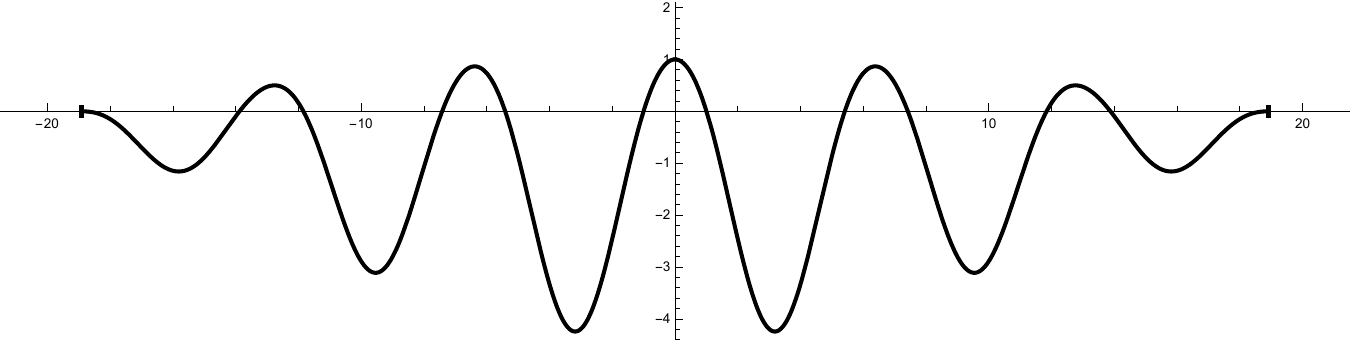}}
    \put(67.5,22){\makebox(0,0)[lm]{\footnotesize$0$}}                 
    \put(92,29){\makebox(0,0)[lb]{\footnotesize$y=v^{\texttt{D+}}_{1,r_{6}}(x)$}}     
    \put(138,21){\makebox(0,0)[lb]{\footnotesize$x$}}    
    \put(72.0,34){\makebox(0,0)[lm]{\footnotesize$y$}}      
    \put(130.5,21.0){\makebox(0,0)[lm]{\footnotesize$r_{6}$}}                     
    \put(5.0,21.0){\makebox(0,0)[lm]{\footnotesize$-r_{6}$}}         
  \end{picture}    
\end{minipage}  
  }
\caption{The \fucik~eigenfunction $v^{\texttt{D+}}_{1,r_{6}}$
of the Dirichlet problem \eqref{eq:fucik}
for $r = r_{6} = \frac{\pi}{2}\sqrt{145}$ and
$(\alpha,\beta)\in \gamma_{1,r_{6}}^{\texttt{D+}}\subset\Sigma_{r_{6}}^{\texttt{D}}$
such that $\alpha = 4$ and $\beta \doteq 0.48204$.}
\label{obr3}
\end{figure}

\begin{figure}[h]
\centerline{
  \setlength{\unitlength}{1mm}
\begin{minipage}{14cm}
  \begin{picture}(140, 36)(0, 0)
    \put(0,0){\includegraphics[width=14.0cm]{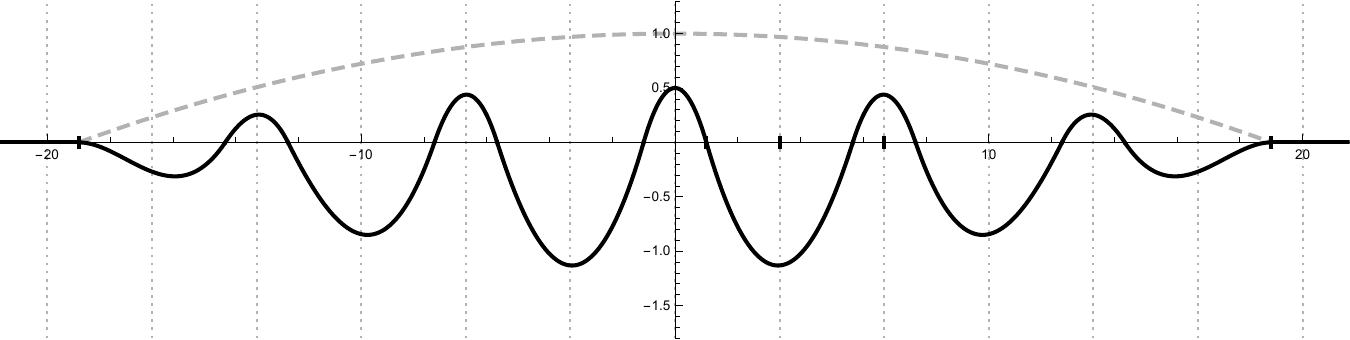}}
    \put(67.5,18.5){\makebox(0,0)[lm]{\footnotesize$0$}}             
    \put(74,23){\makebox(0,0)[lm]{\footnotesize$x_{1}$}}         
    \put(76.0,17.5){\makebox(0,0)[lm]{\footnotesize$x_{1} + x_{2}$}}             
    \put(90.8,17.5){\makebox(0,0)[lm]{\footnotesize$T$}}                 
    \put(130.5,17.5){\makebox(0,0)[lm]{\footnotesize$s_{3}$}}                     
    \put(5.0,17.5){\makebox(0,0)[lm]{\footnotesize$-s_{3}$}}                      
    \put(138.0,18.0){\makebox(0,0)[lm]{\footnotesize$x$}}                            
    \put(100,7){\makebox(0,0)[lb]{\footnotesize$y=v_{3}(x)$}}     
    \put(100,30){\makebox(0,0)[lb]{\footnotesize$y=w(x)$}}         
    \put(72.0,34){\makebox(0,0)[lm]{\footnotesize$y$}}                                
  \end{picture}    
\end{minipage}  
  }
\caption{The graph of analytically constructed function $v_{3}$ 
with 11 semi-waves on the interval $[-s_{3}, s_{3}] = [-19, 19]$
for $x_{1}=1$ and $x_{2}=\frac{7}{3}$. 
Each semi-wave is represented by a fourth order polynomial.}
\label{obr1}
\end{figure}

\begin{lemma} \label{veta1}
Let $x_{1}, x_{2} > 0$ be arbitrary and let $\alpha_{0}, \beta_{0} > 0$ be such that 
\begin{equation}
  \label{rce1}
  \alpha_{0} \cdot 2x_{1}^{3} + \beta_{0} \cdot 2x_{2}^{3} = 
  10(x_{1} + x_{2}) - 15\left(\tfrac{1}{x_{1}} + \tfrac{1}{x_{2}}\right).
\end{equation}
Then for all $(\alpha,\beta)\in(0,\alpha_{0}]\times(0,\beta_{0}]$,
 $(\alpha,\beta)\neq(\alpha_{0},\beta_{0})$, 
there exists $v\in H^{2}(\R)$ 
such that $J(\alpha, \beta, v) < 0$.
\end{lemma}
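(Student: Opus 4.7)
The plan is to exhibit, for each fixed admissible pair $(\alpha,\beta)$, an explicit compactly supported $v\in H^2(\R)$ built by concatenating alternating parabolic semi-waves, in such a way that the per-bump contribution to $J$ collapses exactly into the expression on the right-hand side of \eqref{rce1}; strict negativity of $J(\alpha,\beta,v)$ then follows by scaling up the number of bumps.

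First I would take each interior positive semi-wave of half-width $x_1$ to be the parabolic profile $A_{+}\bigl(x_1^2-(s-s_k)^2\bigr)$ on an interval of length $2x_1$, and each interior negative semi-wave of half-width $x_2$ to be the reflected parabolic profile with amplitude $-A_{-}$, where $A_{+}:=\sqrt{15/8}/x_1$ and $A_{-}:=\sqrt{15/8}/x_2$. The identity $A_{+}x_1=A_{-}x_2=\sqrt{15/8}$ is precisely the $C^{1}$-matching condition at an interior junction where the two neighbouring parabolas share a common zero, so $v$ is $C^{1}$ in the interior. On each of the two outer ends I would attach a short fixed-length taper (a Hermite-type polynomial) interpolating the inner-junction values $(v,v')=(0,\pm 2\sqrt{15/8})$ to $(v,v')=(0,0)$ at the outer edge; the resulting $v_N\in H^2(\R)$ has compact support with $v_N^{+}\not\equiv 0$ and $v_N^{-}\not\equiv 0$.

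The heart of the argument is the per-bump integral. For $v(s)=A(x^2-s^2)$ on $[-x,x]$ one computes directly
\[
\int_{-x}^{x}(v'')^2\,ds=8A^2 x,\qquad
\int_{-x}^{x}(v')^2\,ds=\tfrac{8}{3}A^2 x^3,\qquad
\int_{-x}^{x}v^2\,ds=\tfrac{16}{15}A^2 x^5,
\]
and the choice $A^2=15/(8x^2)$ collapses the bump's contribution to $J$ into exactly $2\gamma x^3-10x+15/x$, with $\gamma\in\{\alpha,\beta\}$ according to its sign. Summing over $N$ interior positive and $N$ interior negative bumps and lumping the two taper contributions into a constant $C=C(\alpha,\beta,x_1,x_2)$ that is independent of $N$, I obtain
\[
J(\alpha,\beta,v_N)=N\bigl[\,2\alpha x_1^3+2\beta x_2^3-10(x_1+x_2)+15(1/x_1+1/x_2)\,\bigr]+C.
\]

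By \eqref{rce1} the bracket vanishes at $(\alpha_0,\beta_0)$, so for any $(\alpha,\beta)\in(0,\alpha_0]\times(0,\beta_0]$ with $(\alpha,\beta)\neq(\alpha_0,\beta_0)$ the bracket is strictly negative (by linearity and strict monotonicity in whichever coordinate has been strictly decreased), and taking $N$ large enough forces $J(\alpha,\beta,v_N)<0$, as required. The main technical point I anticipate is the explicit bookkeeping for the two tapers — in particular tracking which polynomial piece is positive (contributing to the $\alpha$-term) and which is negative (contributing to the $\beta$-term) — so as to confirm that $C$ is a fixed finite quantity depending only on the prescribed parameters, not on $N$; this is settled once the taper length is fixed and a single Hermite interpolant is computed explicitly.
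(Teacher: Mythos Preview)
Your proof is correct and shares the paper's core construction---alternating parabolic semi-waves with amplitudes chosen so that $A_{+}x_1=A_{-}x_2$ (the $C^1$-matching condition), making each full period contribute exactly the bracket in \eqref{rce1}---but the tapering mechanism differs. The paper multiplies the entire periodic profile $\tilde v$ by a global quadratic envelope $w_n(x)=1-x^2/s_n^2$, which turns every bump into a quartic and forces an asymptotic expansion: their Appendix~\ref{sec:app3} computes $J(\alpha,\beta,v_n)=\tfrac{32}{225}[\cdots]\,n+\mathcal O(1)$ via Bernoulli-polynomial sums over the bump centres. You instead keep the interior bumps as exact parabolas and confine the boundary correction to two fixed-length Hermite tapers, so your identity $J(\alpha,\beta,v_N)=N[\cdots]+C$ is exact rather than asymptotic and needs no $\mathcal O(1)$ bookkeeping. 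Your route is more elementary; the paper's multiplicative envelope, on the other hand, generalises cleanly to the rational semi-waves of Lemma~\ref{veta2}, where a simple end-taper would be harder to write down.
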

\begin{proof}
We find the required function $v$ as a suitable member of a sequence $(v_{n})_{n=1}^{+\infty}$ formed
by even functions $v_{n}$ with exactly $(4n-1)$ semi-waves (see Figure \ref{obr1}).
More precisely, we define $v_{n}$ as 
\begin{align}  \label{rce11}
  v_{n}(x) := \tilde{v}(x) \cdot w_{n}(x),\quad n\in\N, 
\end{align}
where $\tilde v$ is an even $T$-periodic function with the basic period $T:=2(x_{1} + x_{2}) > 0$
and $w_{n}$ is an even weight function $w_{n} = w_{n}(x) \ge 0$ that is nonzero only on
the interval $(-s_{n}, s_{n})$, where $s_{n} := n T - x_{1}$.
In particular, we consider $w_{n}$ in the following form
\begin{equation} \label{rce80}
  w_{n}(x) := 
    \left\{
  \begin{array}{lll}
    \displaystyle
    1 -\frac{x^{2}}{s_{n}^{2}}
    && \text{for } x \in (-s_{n}, s_{n}), \\[3pt]
    0
    && \text{for } |x| \ge s_{n},
  \end{array}
  \right.
\end{equation}
and $\tilde{v}$ on an interval of length $T$ as
\begin{equation} \label{rce81}
  \tilde{v}(x) := 
  \left\{
  \begin{array}{lll}
    \vpt(x)
    && \text{for } x \in [- x_{1}, x_{1}], \\[3pt]
    \vnt(x-(x_{1} + x_{2}))
    && \text{for } x \in (x_{1}, x_{1} + 2x_{2}],
  \end{array}
  \right.
\end{equation}
where the positive and the negative semi-waves $\vpt$ and $\vnt$ are given by
\begin{equation} \label{rce82}
  \vpt(x) :=  \frac{x_{1}}{2}\left(1 - \frac{x^{2}}{x_{1}^{2}}\right),\qquad
  \vnt(x) := -\frac{x_{2}}{2}\left(1 - \frac{x^{2}}{x_{2}^{2}}\right).
\end{equation}
Let us note that the even function $v_{n}$ defined in \eqref{rce11} consists of exactly
$(2n-1)$ positive and $2n$ negative semi-waves.
We show that the required function $v$ such that $J(\alpha,\beta,v) < 0 $ is given by $v_{n}$ for sufficiently large $n$.

At first, it is straightforward to verify that $v_{n} \in C^{1}(\R)$
since it consists of a finite number of polynomials that are smoothly connected
at zeros of $v_{n}$ on $[-s_{n}, s_{n}]$.
Moreover, we have $v_{n}(x) = 0$ for all $|x|\ge s_{n}$ 
and $v'_{n}(\pm s_{n}) = 0$. 
We therefore conclude that $v_{n} \in H^{2}(\R)$.

\begin{figure}[t]
\centerline{
  \setlength{\unitlength}{1mm}
  \begin{picture}(66, 62)(-7,-2)
    \put(0,0){\includegraphics[height=6.0cm]{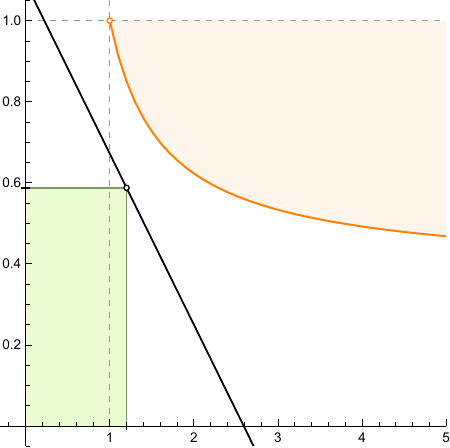}}
    \put(53.5,-2){\makebox(0,0)[lb]{\footnotesize$\alpha$}}         
    \put(-4,50){\makebox(0,0)[lb]{\footnotesize$\beta$}}        
    \put(34,45){\makebox(0,0)[lb]{\footnotesize$\Omega^{\texttt{+}}$}}            
    \put(34,34){\makebox(0,0)[lb]{\footnotesize$\mathscr{E}$}}                
    \put(15.5,-2){\makebox(0,0)[lb]{\footnotesize$\alpha_{0}$}}                     
    \put(-4,33.5){\makebox(0,0)[lb]{\footnotesize$\beta_{0}$}}      
  \end{picture}    
\hspace{12pt}  
  \begin{picture}(66, 62)(-7,-2)
    \put(0,0){\includegraphics[height=6.0cm]{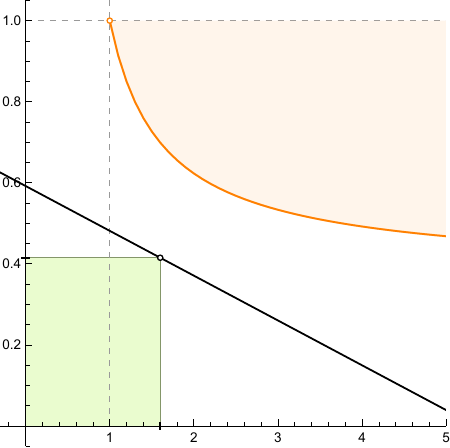}}
    \put(53.5,-2){\makebox(0,0)[lb]{\footnotesize$\alpha$}}         
    \put(-4,50){\makebox(0,0)[lb]{\footnotesize$\beta$}}        
    \put(34,45){\makebox(0,0)[lb]{\footnotesize$\Omega^{\texttt{+}}$}}            
    \put(34,34){\makebox(0,0)[lb]{\footnotesize$\mathscr{E}$}}              
    \put(20,-2){\makebox(0,0)[lb]{\footnotesize$\alpha_{0}$}}                     
    \put(-4,24){\makebox(0,0)[lb]{\footnotesize$\beta_{0}$}}                         
  \end{picture}    
  }
\caption{
Existence results provided by Lemma \ref{veta1} for 
$\alpha_{0} = 1.2$, $x_{1} = 1.5$, $x_{2} = 2$ (left)
and $\alpha_{0} = 1.6$, $x_{1} = 1.2$, $x_{2} = 2.5$ (right).
For each point $(\alpha,\beta)$ in the green rectangle, there exists 
$v\in H^{2}(\R)$ such that $J(\alpha, \beta, v) < 0$.
Note that the point $(\alpha_{0}, \beta_{0})$ is given by \eqref{rce1}
and lies on the black line
$\alpha \cdot 2x_{1}^{3} + \beta \cdot 2x_{2}^{3} = 
10(x_{1} + x_{2}) - 15\left(1/x_{1} + 1/x_{2}\right)$.
}
\label{fig:exist}
\end{figure}

At second, we claim that $J(\alpha, \beta, v_{n})$ can be expanded in terms of $n$ 
in the following way
(for the derivation see~\ref{sec:app3})
\begin{equation} \label{ref10}
  J(\alpha, \beta, v_{n}) 
  = 
  \tfrac{32}{225}
  \left(
  15\left(\tfrac{1}{x_{1}} + \tfrac{1}{x_{2}}\right) - 10\left(x_{1} + x_{2}\right)
  + \alpha\cdot 2 x_{1}^{3}
  + \beta\cdot 2 x_{2}^{3}
  \right) 
  n + \mathcal{O}(1),
\end{equation}
which can be written using \eqref{rce1} as
\begin{equation*}
  J(\alpha, \beta, v_{n}) 
  =
  n \left( 
  \tfrac{64}{225}
  \left(
  \left(\alpha - \alpha_{0}\right)\cdot x_{1}^{3}
  +
  \left(\beta - \beta_{0}\right)\cdot x_{2}^{3}
  \right) + \frac{\mathcal{O}(1)}{n}
  \right).
\end{equation*}
Finally, since $\alpha \le \alpha_{0}$, $\beta \le \beta_{0}$
and $(\alpha, \beta) \neq (\alpha_{0}, \beta_{0})$, we conclude that
$J(\alpha, \beta, v_{n}) < 0$
for sufficiently large~$n$.
\end{proof}

The statement of Lemma \ref{veta1} is illustrated in Figure~\ref{fig:exist}.
In the following theorem, we introduce a two-parametric family of functions 
$\mathcal{B}_{x_{1},x_{2}}$ 
such that each $\mathcal{B}_{x_{1},x_{2}}$ represents a lower bound for $\beta^{\ast}$, i.e.,
$\mathcal{B}_{x_{1},x_{2}}(q) \le \beta^{\ast}(q)$ for $q > 1$.
See also Figure \ref{obr55} for the graphs of $\mathcal{B}_{x_{1},x_{2}}$ 
as black dotted dashed curves
for two different settings of parameters:
$x_{1} = \frac{6}{5}$, $x_{2} = \frac{32}{15}$ (left) and 
$x_{1} = \frac{3}{2}$, $x_{2} = \frac{11}{6}$ (right).

\begin{figure}[t]
\centerline{
  \setlength{\unitlength}{1mm}
  \begin{picture}(71, 53)(-3,-3)
    \put(0,0){\includegraphics[height=5.0cm]{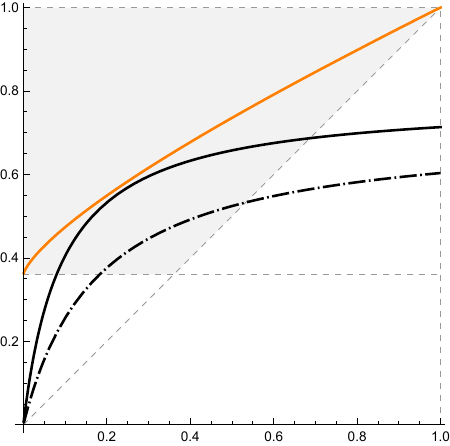}}
    \put(43,-3){\makebox(0,0)[lb]{\footnotesize$1/q$}}         
    \put(-3,43){\makebox(0,0)[lb]{\footnotesize$\beta$}}        
    \put(26,44.5){\makebox(0,0)[lb]{\footnotesize$\beta = \beta^{\ast}(q)$}}            
    \put(26,22){\makebox(0,0)[lb]{\footnotesize $\beta = 1/q$}}                
    \put(26,14){\makebox(0,0)[lb]{\footnotesize $\beta = \frac{9}{25}$}}                    
    \put(51,34.5){\makebox(0,0)[lb]{\footnotesize$\beta = \mathcal{B}_{x_{1},x_{2}}^{p_{1},p_{2}}(q)$}}                
    \put(51,29.0){\makebox(0,0)[lb]{\footnotesize$\beta = \mathcal{B}_{x_{1},x_{2}}(q)$}}                
  \end{picture}    
  \hspace{6pt}
  \begin{picture}(71, 53)(-3,-3)
    \put(0,0){\includegraphics[height=5.0cm]{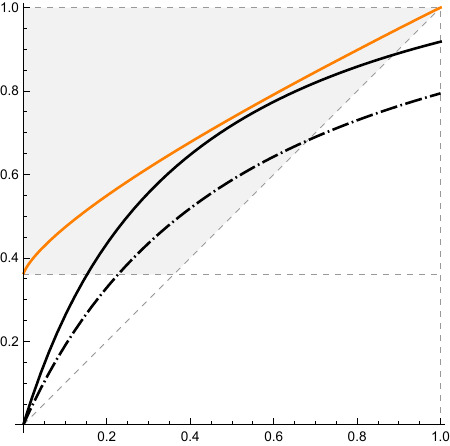}}
    \put(43,-3){\makebox(0,0)[lb]{\footnotesize$1/q$}}         
    \put(-3,43){\makebox(0,0)[lb]{\footnotesize$\beta$}}        
    \put(26,44.5){\makebox(0,0)[lb]{\footnotesize$\beta = \beta^{\ast}(q)$}}            
    \put(26,22){\makebox(0,0)[lb]{\footnotesize $\beta = 1/q$}}                
    \put(26,14){\makebox(0,0)[lb]{\footnotesize $\beta = \frac{9}{25}$}}                    
    \put(51,44.0){\makebox(0,0)[lb]{\footnotesize$\beta = \mathcal{B}_{x_{1},x_{2}}^{p_{1},p_{2}}(q)$}}                
    \put(51,38.5){\makebox(0,0)[lb]{\footnotesize$\beta = \mathcal{B}_{x_{1},x_{2}}(q)$}}                
  \end{picture}    
  }
\caption{Graphs of functions $\mathcal{B}_{x_{1},x_{2}}$ 
and $\mathcal{B}_{x_{1},x_{2}}^{p_{1},p_{2}}$
for $x_{1} = \frac{6}{5}$ (left), $x_{1} = \frac{3}{2}$ (right)
and for $x_{2} = \frac{T}{2} - x_{1}$, $T = \frac{20}{3}$ and
$p_{1} = \frac{1}{10}$, $p_{2} = \frac{3}{20}$.}
\label{obr55}
\end{figure}

\begin{theorem} \label{veta11}
Let $a > b > 0$ be arbitrary but fixed and let 
$x_{1},x_{2} > 0$ be such that 
$\mathcal{B}_{x_{1},x_{2}}(\frac{a}{b}) > \frac{b}{a}$, where
\begin{equation} 
\label{eq:bx1x2}
  \mathcal{B}_{x_{1},x_{2}}(q) :=
  \frac{10(x_{1} + x_{2})-15\left(\frac{1}{x_{1}} + \frac{1}{x_{2}}\right)}{2 x_{1}^{3}\cdot q + 2 x_{2}^{3}}, \quad q > 1.
\end{equation}
Then the travelling wave solution of \eqref{eq:original_problem} exists for any wave speed $c$ satisfying
\begin{equation} \label{eq:ccc}
|c| \in \left( \sqrt[4]{\frac{4b}{\mathcal{B}_{x_{1},x_{2}}(a/b)}}, \sqrt[4]{4a} \right).
\end{equation}
\end{theorem}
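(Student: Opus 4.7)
The plan is to reduce the statement to Theorem \ref{th:main2} by showing that the explicit function $\mathcal{B}_{x_1,x_2}$ is a pointwise lower bound for $\beta^{\ast}$ on $(1,+\infty)$. Once this is established, the wave-speed interval in \eqref{eq:ccc} will be contained in the interval \eqref{eq:c} furnished by Theorem \ref{th:main2}, and the assertion will follow.

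First I would fix $q>1$ and apply Lemma \ref{veta1} with the specific choice $\alpha_0 := q\,\mathcal{B}_{x_1,x_2}(q)$ and $\beta_0 := \mathcal{B}_{x_1,x_2}(q)$. The compatibility condition \eqref{rce1} then reduces to
\begin{equation*}
\mathcal{B}_{x_1,x_2}(q)\bigl(2x_1^{3} q + 2x_2^{3}\bigr) = 10(x_1+x_2)-15\bigl(\tfrac{1}{x_1}+\tfrac{1}{x_2}\bigr),
\end{equation*}
which is nothing but the defining identity \eqref{eq:bx1x2}. Consequently, for every $\beta$ with $0<\beta<\mathcal{B}_{x_1,x_2}(q)$, the pair $(q\beta,\beta)$ lies strictly inside the rectangle $(0,\alpha_0]\times(0,\beta_0]$, and Lemma \ref{veta1} supplies some $v\in H^{2}(\R)$ with $J(q\beta,\beta,v)<0$. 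By Lemma \ref{lemma:omega_minus} this means $\beta<\beta^{\ast}(q)$; passing to the supremum over such $\beta$ yields the desired inequality $\mathcal{B}_{x_1,x_2}(q)\le\beta^{\ast}(q)$.

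Specializing to $q=a/b$, the hypothesis $\mathcal{B}_{x_1,x_2}(a/b)>b/a$ is precisely the condition that $\sqrt[4]{4b/\mathcal{B}_{x_1,x_2}(a/b)}<\sqrt[4]{4a}$, so the interval in \eqref{eq:ccc} is nonempty. Combining $\mathcal{B}_{x_1,x_2}(a/b)\le\beta^{\ast}(a/b)$ with the monotonicity of $t\mapsto\sqrt[4]{4b/t}$ shows that this interval is contained in the one from \eqref{eq:c}, and the existence of a travelling wave solution then follows directly from Theorem \ref{th:main2}.

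The substantive analytic content is already encapsulated in Lemma \ref{veta1}, whose proof carries out the asymptotic expansion \eqref{ref10} for the explicit polynomial test functions $v_n$. Given that lemma together with Theorem \ref{th:main2}, the present theorem is essentially an algebraic recasting in the parameters $a$, $b$ and $c$, so I do not anticipate any genuine obstacle beyond the two routine verifications sketched above: the identification $(\alpha_0,\beta_0)=\bigl(q\,\mathcal{B}_{x_1,x_2}(q),\mathcal{B}_{x_1,x_2}(q)\bigr)$ and the conversion of the hypothesis $\mathcal{B}_{x_1,x_2}(a/b)>b/a$ into nonemptiness of the wave-speed interval.
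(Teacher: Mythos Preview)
Your proposal is correct and follows essentially the same route as the paper: both arguments hinge on applying Lemma~\ref{veta1} with $(\alpha_0,\beta_0)=(q\,\mathcal{B}_{x_1,x_2}(q),\mathcal{B}_{x_1,x_2}(q))$, for which \eqref{rce1} holds by the very definition \eqref{eq:bx1x2}. The only cosmetic difference is that the paper invokes Theorem~\ref{th:main1} directly (verifying $(4a/c^4,4b/c^4)\in\Omega^{\texttt{-}}$ and using Remark~\ref{rem:omega} to get $\beta_0\le 1$), whereas you first isolate the inequality $\mathcal{B}_{x_1,x_2}(q)\le\beta^{\ast}(q)$ and then conclude via Theorem~\ref{th:main2}.
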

\begin{proof}
Let us define
$$
  q_{0}:= \frac{a}{b} > 1,\quad
  \beta_{0} := \mathcal{B}_{x_{1},x_{2}}(q_{0}) > \frac{b}{a} > 0, \quad
  \alpha_{0} := q_{0}\beta_{0} > 1.
$$
Then the equation \eqref{rce1} is satisfied and using Lemma \ref{veta1},
we obtain that for all $(\alpha,\beta)\in(0,\alpha_{0}]\times(0,\beta_{0}]$,
$(\alpha,\beta)\neq(\alpha_{0},\beta_{0})$, 
there exists $v\in H^{2}(\R)$ such that $J(\alpha, \beta, v) < 0$.
Thus, using \eqref{mrce1} in Remark \ref{rem:omega}, we may conclude that $\beta_{0} \le 1$.
Now, let us take
$$
  \alpha = \frac{4 a}{c^4} > 0, \quad
  \beta = \frac{4 b}{c^4} > 0,
$$
and using \eqref{eq:ccc}, we get
$$
  0 < \beta < \beta_{0} \le 1 < \alpha = q_{0}\beta < q_{0}\beta_{0} = \alpha_{0},
$$
which implies that $(\alpha,\beta)\in\Omega^{\texttt{-}}$.
Our statement then follows from Theorem \ref{th:main1}.
\end{proof}

\begin{remark}
Theorem \ref{veta11} is a weaker version of Theorem \ref{th:main2} 
in the sense that the interval in \eqref{eq:ccc} is a proper subset of the wave speed range in \eqref{eq:c}.
See Figure \ref{fig:main_3} to compare the length of intervals in \eqref{eq:ccc} and \eqref{eq:c}.
On the other hand, the assumption \eqref{eq:ccc} in Theorem \ref{veta11} is easy 
to verify compared to the optimal condition \eqref{eq:c} in Theorem \ref{th:main2}.
\end{remark}

\begin{figure}[t]
\centerline{
  \setlength{\unitlength}{1mm}
  \begin{picture}(67, 63)(-7, -3)
    \put(0,0){\includegraphics[height=6.0cm]{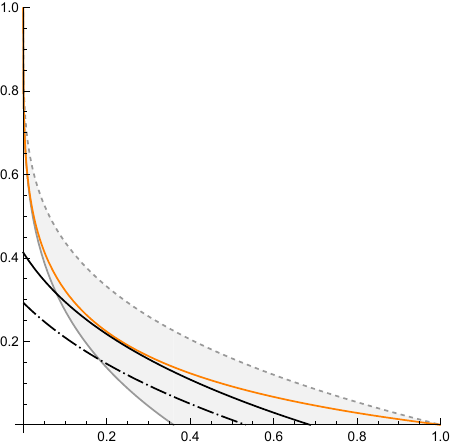}}
    \put(52,-3){\makebox(0,0)[lb]{\footnotesize$b/a$}}         
    \put(-7,50){\makebox(0,0)[lb]{\footnotesize$\cfrac{l}{\sqrt[4]{4a}}$}}             
  \end{picture}     
\hspace{12pt}  
  \begin{picture}(73, 63)(-7, -3)
    \put(0,0){\includegraphics[height=6.0cm]{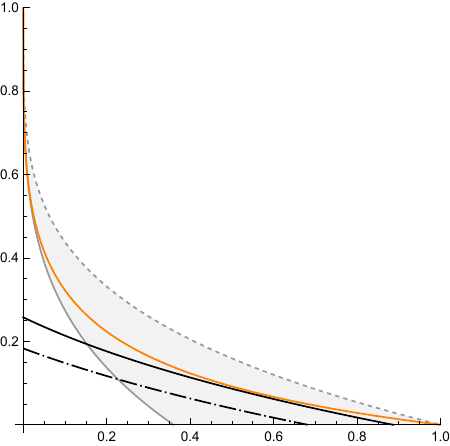}}
    \put(52,-3){\makebox(0,0)[lb]{\footnotesize$b/a$}}         
    \put(-7,50){\makebox(0,0)[lb]{\footnotesize$\cfrac{l}{\sqrt[4]{4a}}$}}             
  \end{picture}    
  }
\caption{
Dependence of the length $l$ of 
the interval in \eqref{eq:c} (orange curves),
the interval in \eqref{eq:ccc} (black dotted dashed curves)
and
the interval in \eqref{eq:cc} (black solid curves) 
on $a$ and $b$ 
in the case of the following setting of free parameters:
$x_{1} = \frac{6}{5}$ (left), $x_{1} = \frac{3}{2}$ (right)
and for $x_{2} = \frac{T}{2} - x_{1}$, $T = \frac{20}{3}$ and
$p_{1} = \frac{1}{10}$, $p_{2} = \frac{3}{20}$.
}
\label{fig:main_3}
\end{figure}

The price we pay for the simplicity of formulas \eqref{eq:bx1x2}, \eqref{eq:ccc} 
is a significant difference between $\beta^{\ast}$ and $\mathcal{B}_{x_{1},x_{2}}$ 
illustrated in Figure \ref{obr55}. 
To obtain a better analytical approximation of 
$\beta^{\ast}$ (and hence larger interval for $|c|$) requires 
a better analytical approximation of \fucik~eigenfunctions $v^{\texttt{D+}}_{1,r}$.

The following lemma extends the existence result of Lemma \ref{veta1} 
using a more general description of positive and negative semi-waves represented
by rational functions with two free parameters $p_{1}$ and $p_{2}$
(see Figure \ref{obr11} for the graph of the analytically constructed function 
$\breve{v}_{3}$ and compare it with the graph of the \fucik~eigenfunction $v_{1,r_{6}}^{\texttt{D+}}$
in Figure \ref{obr3} obtained numerically).
Let us note that this construction is strongly inspired 
by Padé approximations of trigonometric functions (see~\cite{Baker_1996}). 

\begin{lemma} \label{veta2}
Let $x_{1},x_{2},p_{1},p_{2} > 0$ be arbitrary and let 
$\alpha_{0}, \beta_{0} > 0$ be such that 
\begin{equation}
  \label{rce2}
  G(p_{1},x_{1})\cdot\alpha_{0} + 
  G(p_{2},x_{2})\cdot\beta_{0} = 
  2 M(p_{1},x_{1}) + 
  2 M(p_{2},x_{2}) - 
  N(p_{1},x_{1}) - 
  N(p_{2},x_{2}),
\end{equation}
where functions $G$, $M$ and $N$ are defined as
\begin{align*}
  G(p,x)
  & := 
  \frac{\left(p x^{2} + 1\right)^{2}
  \left(
  \sqrt{p}x\left(p x^{2} + 3\right) + 
  \left(p x^{2} - 3\right)\left(p x^{2} + 1\right)\arctan\left(\sqrt{p}x\right)
  \right)  
  }{4\sqrt{p^{5}} x^{2}},
\\[6pt]
  M(p,x)
  & := 
  \frac{\left(p x^{2} + 1\right)
  \left(
  \sqrt{p}x\left(p x^{2} + 3\right)\left(3p x^{2} - 1\right) + 
  3\left(p x^{2} + 1\right)^{3}\arctan\left(\sqrt{p}x\right)
  \right)  
  }{24 \sqrt{p^{3}} x^{2}},
\\[6pt]
  N(p,x)
  & := 
  \frac{
  \sqrt{p}x\left(15 p^{4} x^{8} + 70 p^{3} x^{6} + 128 p^{2} x^{4} + 10 p x^{2} + 65 \right) 
  + 15\left(p x^{2} + 1\right)^{5}\arctan\left(\sqrt{p}x\right) 
  }{40 \sqrt{p} x^{2}\left(p x^{2} + 1\right)}.
\end{align*}
Then for all $(\alpha,\beta)\in(0,\alpha_{0}]\times(0,\beta_{0}]$,
 $(\alpha,\beta)\neq(\alpha_{0},\beta_{0})$, 
there exists $v\in H^{2}(\R)$ 
such that $J(\alpha, \beta, v) < 0$.
\end{lemma}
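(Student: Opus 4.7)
The plan is to mirror closely the structure of the proof of Lemma \ref{veta1}, replacing the fourth-order polynomial semi-waves by rational functions inspired by Padé approximants of $\cos/\sin$ with $\sqrt{p}x$ as the argument. Concretely, I would introduce a two-parameter family of semi-waves of the shape
\begin{equation*}
  \vpt(x) := \frac{x_{1}}{2}\cdot\frac{1 - x^{2}/x_{1}^{2}}{1 + p_{1} x^{2}},\qquad
  \vnt(x) := -\frac{x_{2}}{2}\cdot\frac{1 - x^{2}/x_{2}^{2}}{1 + p_{2} x^{2}}
\end{equation*}
(or a similar normalization leading exactly to the $G$, $M$, $N$ expressions in the statement; the key property is that the $p_i \to 0$ limits recover the polynomials used in Lemma \ref{veta1}). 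I would then define the $T$-periodic even function $\tilde v$ by \eqref{rce81} with these new $\vpt$, $\vnt$, keep the quadratic weight $w_{n}$ from \eqref{rce80}, and set $v_{n} := \tilde v \cdot w_{n}$, which vanishes outside $(-s_{n}, s_{n})$ with $s_{n} = nT - x_{1}$.

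Next, I would verify that $v_{n}\in H^{2}(\R)$ in exactly the same way as in the proof of Lemma \ref{veta1}: the rational building blocks are smooth on each semi-wave interval, they vanish together with their first derivatives at the endpoints of their respective semi-waves (this is what pins down the prefactors), and $w_n$ is smooth with $w_n'(\pm s_n)=0$, so the resulting $v_n$ is in $C^{1}(\R)$ with compact support and piecewise $C^{\infty}$, hence in $H^{2}(\R)$.

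The analytic heart of the proof is the expansion of $J(\alpha,\beta,v_n)$ in powers of $n$, analogous to \eqref{ref10}. Here I would separately evaluate the four integrals $\int (v^+_n)^2$, $\int (v^-_n)^2$, $\int (v'_n)^2$, $\int (v''_n)^2$. Each of them splits into a sum over the $O(n)$ semi-waves; because the weight $w_n$ is slowly varying on the scale of one semi-wave, a Riemann-sum/averaging argument (or direct bookkeeping, as done in Appendix \ref{sec:app3}) shows that the leading term is $n$ times the integral of the corresponding quantity over one period of $\tilde v$, plus $\mathcal O(1)$. The integrals over one semi-wave of $(\vpt)^{2}$, $2(\vpt')^{2}-(\vpt'')^{2}$ (and analogously for $\vnt$) are the rational integrals that produce, upon partial fraction decomposition, the $\arctan(\sqrt{p}x)$ contributions; matching these with the definitions in the statement identifies $G(p_i,x_i)$ with the $L^{2}$-mass of a semi-wave and $2M(p_i,x_i)-N(p_i,x_i)$ with its $(2(v')^2-(v'')^2)$-integral. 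The outcome is
\begin{equation*}
  J(\alpha,\beta,v_{n}) = n\bigl(G(p_{1},x_{1})\alpha + G(p_{2},x_{2})\beta
  - 2M(p_{1},x_{1}) - 2M(p_{2},x_{2}) + N(p_{1},x_{1}) + N(p_{2},x_{2})\bigr) + \mathcal O(1),
\end{equation*}
which by \eqref{rce2} rewrites as $n\bigl(G(p_{1},x_{1})(\alpha-\alpha_{0}) + G(p_{2},x_{2})(\beta-\beta_{0})\bigr) + \mathcal O(1)$. Since $G(p_i,x_i)>0$, the bracket is $\le 0$ with strict inequality whenever $(\alpha,\beta)\neq(\alpha_{0},\beta_{0})$ while $\alpha\le\alpha_{0}$, $\beta\le\beta_{0}$, so $J(\alpha,\beta,v_{n})<0$ for $n$ large enough, and $v:=v_n$ is the required function.

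The main obstacle I anticipate is the explicit computation of the one-semi-wave integrals leading to $G$, $M$, $N$: each is an integral of a rational function of up to degree ten in the numerator over $(1+px^{2})^{k}$, and their combination with the slowly varying weight $w_n$ must be controlled to isolate the leading linear-in-$n$ term and bound the remainder uniformly. Everything else — the $H^{2}$ regularity, the reduction via \eqref{rce2}, and the conclusion — is a straightforward transcription of the argument used for Lemma \ref{veta1}, so the novelty is entirely in the integral identities, which I would defer to an appendix analogous to \ref{sec:app3}.
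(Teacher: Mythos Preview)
Your approach is essentially the one the paper takes: rational (Pad\'e-type) semi-waves glued into a periodic profile, multiplied by the same quadratic weight $w_n$, followed by the linear-in-$n$ expansion and the sign argument via \eqref{rce2}. Two concrete points deserve correction.

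First, your claim that the semi-waves ``vanish together with their first derivatives at the endpoints'' is false, and this matters for $H^2$ regularity. For the polynomial semi-waves of Lemma~\ref{veta1} one has $\vpt'(x_1)=\vnt'(-x_2)=-1$, not $0$; the $C^1$ gluing works because the slopes \emph{match}, not because they vanish. With your normalization $\vpt(x)=\tfrac{x_1}{2}(1-x^2/x_1^2)/(1+p_1 x^2)$ one computes $\vpt'(x_1)=-1/(1+p_1 x_1^2)$ and $\vnt'(-x_2)=-1/(1+p_2 x_2^2)$, which do not agree for generic $p_1,p_2$, so $\tilde v\notin C^1$ and $v_n\notin H^2(\R)$. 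The paper fixes this by inserting the factor $(1+p_i x_i^2)$ in the numerator,
\[
\vpt(x)=\frac{x_1}{2}\Bigl(1-\frac{x^2}{x_1^2}\Bigr)\frac{1+p_1 x_1^2}{1+p_1 x^2},\qquad
\vnt(x)=-\frac{x_2}{2}\Bigl(1-\frac{x^2}{x_2^2}\Bigr)\frac{1+p_2 x_2^2}{1+p_2 x^2},
\]
which forces $\vpt'(x_1)=\vnt'(-x_2)=-1$ independently of $p_1,p_2$ and also yields exactly the $G$, $M$, $N$ in the statement.

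Second, you assert $G(p_i,x_i)>0$ without justification; this is not immediate since the factor $(px^2-3)$ changes sign. The paper supplies a short proof: writing $G(p,x)=\tfrac{(px^2+1)^2}{4p^{5/2}x^2}\,g(\sqrt{p}\,x)$ with $g(u)=u(u^2+3)+(u^2-3)(u^2+1)\arctan u$, one checks $g(0)=0$ and $g'(u)>0$ for $u>0$. Everything else in your outline matches the paper.
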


\begin{figure}[h]
\centerline{
  \setlength{\unitlength}{1mm}
\begin{minipage}{14cm}
  \begin{picture}(140, 35)(0, 0)
    \put(0,0){\includegraphics[width=14.0cm]{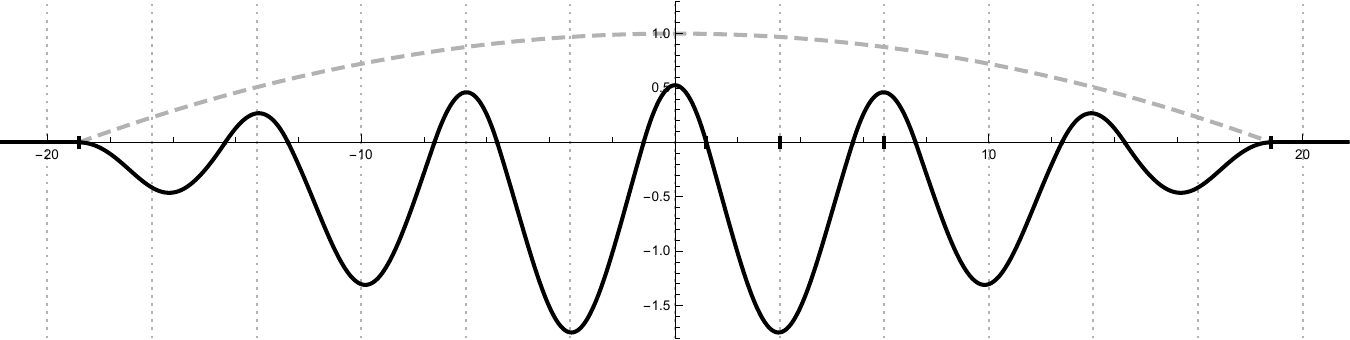}}
    \put(67.5,18){\makebox(0,0)[lm]{\footnotesize$0$}}             
    \put(74,23){\makebox(0,0)[lm]{\footnotesize$x_{1}$}}         
    \put(76.0,17.5){\makebox(0,0)[lm]{\footnotesize$x_{1} + x_{2}$}}             
    \put(90.8,17.5){\makebox(0,0)[lm]{\footnotesize$T$}}                 
    \put(130.5,17.5){\makebox(0,0)[lm]{\footnotesize$s_{3}$}}                     
    \put(5.0,17.5){\makebox(0,0)[lm]{\footnotesize$-s_{3}$}}                      
    \put(138.0,18.0){\makebox(0,0)[lm]{\footnotesize$x$}}                            
    \put(100,2){\makebox(0,0)[lb]{\footnotesize$y=\breve{v}_{3}(x)$}}     
    \put(100,30){\makebox(0,0)[lb]{\footnotesize$y=w(x)$}}         
    \put(72.0,34){\makebox(0,0)[lm]{\footnotesize$y$}}    
  \end{picture}    
\end{minipage}  
  }
\caption{The graph of 
analytically constructed function $\breve{v}_{3}$ with 11 semi-waves on 
the interval $[-s_{3}, s_{3}] = [-19, 19]$
for $x_{1}=1$, $x_{2}=\frac{7}{3}$ and 
$p_{1} = \frac{1}{20}$, $p_{2} = \frac{1}{10}$.
Each semi-wave is represented by a rational function of order $(4,2)$.}
\label{obr11}
\end{figure}

\begin{proof}
We proceed similarly as in the proof of Lemma \ref{veta1}. The main difference is that
now we consider the $T$-periodic function $\tilde{v}$ given by \eqref{rce81} 
with a more general form of positive and negative semi-waves
\begin{equation} \label{rce90}
  \vpt(x) =  \frac{x_{1}}{2}\left(1 - \frac{x^{2}}{x_{1}^{2}}\right)\frac{1 + p_{1} x_{1}^{2}}{1 + p_{1} x^{2}},\qquad
  \vnt(x) = -\frac{x_{2}}{2}\left(1 - \frac{x^{2}}{x_{2}^{2}}\right)\frac{1 + p_{2} x_{2}^{2}}{1 + p_{2} x^{2}},
\end{equation}
where $p_{1},p_{2} \ge 0$ are new parameters.
Let us note that for $p_{1} = p_{2} = 0$, functions $\vpt$ and $\vnt$ 
in \eqref{rce90} reduce to the form in \eqref{rce82}. 
For this reason, in what follows, we take $p_{1},p_{2} > 0$.
 
Now, let us consider the sequence $(\breve{v}_{n})_{n=1}^{+\infty}$, 
where $\breve{v}_{n}(x) = \tilde{v}(x)\cdot w_{n}(x)$ and the weight function $w_{n}$ 
is given by \eqref{rce80}.
If we proceed similarly as in \ref{sec:app3}, we obtain expansions of the following
integrals with respect to $n$
\begin{align*}
\int_{\breve{v}_{n} > 0} \left(\breve{v}_{n}\right)^{2}\,\dd x & = \tfrac{16}{15}G(p_{1},x_{1}) n + \mathcal{O}(1), &
\int_{\breve{v}_{n} < 0} \left(\breve{v}_{n}\right)^{2}\,\dd x & = \tfrac{16}{15}G(p_{2},x_{2}) n + \mathcal{O}(1), \\
\int_{\breve{v}_{n} > 0} \left(\breve{v}_{n}'\right)^{2}\,\dd x & = \tfrac{16}{15}M(p_{1},x_{1}) n + \mathcal{O}(1), &
\int_{\breve{v}_{n} < 0} \left(\breve{v}_{n}'\right)^{2}\,\dd x & = \tfrac{16}{15}M(p_{2},x_{2}) n + \mathcal{O}(1), \\
\int_{\breve{v}_{n} > 0} \left(\breve{v}_{n}''\right)^{2}\,\dd x & = \tfrac{16}{15}N(p_{1},x_{1}) n + \mathcal{O}(1), &
\int_{\breve{v}_{n} < 0} \left(\breve{v}_{n}''\right)^{2}\,\dd x & = \tfrac{16}{15}N(p_{2},x_{2}) n + \mathcal{O}(1),
\end{align*}
which, using \eqref{rce2}, leads to
\begin{align*} 
  J(\alpha, \beta, v_{n}) 
  = &\
  \tfrac{16}{15}
  \big(
  N(p_{1},x_{1}) + N(p_{2},x_{2}) - 2 M(p_{1},x_{1}) - 2 M(p_{2},x_{2}) + \\
  &\
  \alpha\cdot G(p_{1},x_{1}) + \beta\cdot G(p_{2},x_{2})
  \big)\cdot n + \mathcal{O}(1), \\
  = &\
  n\left(
  \tfrac{16}{15}
  \left(
  (\alpha - \alpha_{0})\cdot G(p_{1},x_{1})+
  (\beta - \beta_{0})\cdot G(p_{2},x_{2})
  \right)
  + \frac{\mathcal{O}(1)}{n}\right).
\end{align*}
Finally, it remains to show that $G(p,x) > 0$ for $p,x > 0$.
Since we have 
$$
  G(p,x) = \frac{(p x^{2} + 1)^{2}}{4 \sqrt{p^{5}}x^{2}}\cdot g\left(\sqrt{p} x\right),
$$
where $g(u) := u(u^{2} + 3) + (u^{2} - 3)(u^{2} + 1)\arctan u$, it is enough to justify that
$g(u) > 0$ for $u > 0$. Indeed, we have $g(0)=0$ and it is straightforward to verify that $g'(u) > 0$ for $u>0$.
Hence, we conclude that $J(\alpha, \beta, v_{n}) <0$ for sufficiently large $n$.
\end{proof}

The following theorem provides a significant improvement of 
the existence result in Theorem~\ref{veta11} in terms of
the length of the interval in \eqref{eq:ccc}. 
It uses a four-parametric family of functions $\mathcal{B}_{x_{1},x_{2}}^{p_{1},p_{2}}$,
each of these functions can be used as a lower bound for $\beta^{\ast}$, 
i.e., $\mathcal{B}_{x_{1},x_{2}}^{p_{1},p_{2}}(q) \le \beta^{\ast}(q)$ for $q > 1$.
See Figure \ref{obr55} for the graphs of $\mathcal{B}_{x_{1},x_{2}}^{p_{1},p_{2}}$ 
as black solid curves for two different settings of parameters
and notice their position with respect to graphs of $\beta^{\ast}$ and $\mathcal{B}_{x_{1},x_{2}}$. 
See also Figure \ref{fig:main_3} to compare the length of the corresponding intervals for $|c|$.

\begin{theorem}
\label{veta12}
Let $a > b > 0$ be arbitrary but fixed and let 
$x_{1},x_{2},p_{1},p_{2} > 0$ be such that 
$\mathcal{B}_{x_{1},x_{2}}^{p_{1},p_{2}}(\frac{a}{b}) > \frac{b}{a}$, where
\begin{equation} 
  \mathcal{B}_{x_{1},x_{2}}^{p_{1},p_{2}}(q) :=
  \frac{2 M(p_{1},x_{1}) + 2 M(p_{2},x_{2}) - N(p_{1},x_{1}) - N(p_{2},x_{2})}{G(p_{1},x_{1})\cdot q + G(p_{2},x_{2})}, \quad q > 1,
\end{equation}
with $M$, $N$ and $G$ being functions given in Lemma \ref{veta2}.
Then the travelling wave solution of \eqref{eq:original_problem} exists for any wave speed $c$ satisfying
\begin{equation} \label{eq:cc}
|c| \in \left( \sqrt[4]{\frac{4b}{\mathcal{B}_{x_{1},x_{2}}^{p_{1},p_{2}}(a/b)}}, \sqrt[4]{4a} \right).
\end{equation}
\end{theorem}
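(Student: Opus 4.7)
The proof will be an almost verbatim adaptation of the argument given for Theorem \ref{veta11}, with Lemma \ref{veta2} replacing Lemma \ref{veta1} as the existence engine. The plan is to set
$$
q_0 := \frac{a}{b} > 1,\qquad \beta_0 := \mathcal{B}_{x_1,x_2}^{p_1,p_2}(q_0),\qquad \alpha_0 := q_0\beta_0,
$$
and then to verify that these constants satisfy the algebraic compatibility condition \eqref{rce2}. Indeed, the definition of $\mathcal{B}_{x_1,x_2}^{p_1,p_2}$ is engineered precisely so that
$$
G(p_1,x_1)\cdot q_0\beta_0 + G(p_2,x_2)\cdot\beta_0 = 2M(p_1,x_1) + 2M(p_2,x_2) - N(p_1,x_1) - N(p_2,x_2),
$$
which is \eqref{rce2} with $\alpha_0=q_0\beta_0$. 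The hypothesis $\mathcal{B}_{x_1,x_2}^{p_1,p_2}(a/b) > b/a$ guarantees $\beta_0 > b/a > 0$, and consequently $\alpha_0 = q_0\beta_0 > 1$, so that the pair $(\alpha_0,\beta_0)$ lies in $\Omega$.

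Next I would invoke Lemma \ref{veta2}: for every $(\alpha,\beta)\in(0,\alpha_0]\times(0,\beta_0]$ distinct from $(\alpha_0,\beta_0)$, there exists $v\in H^2(\R)$ with $J(\alpha,\beta,v)<0$. Combined with \eqref{mrce1} in Remark \ref{rem:omega}, which forbids $J(\alpha,\beta,v)<0$ whenever $\beta\ge 1$, this automatically yields the a posteriori bound $\beta_0\le 1$. Hence $(\alpha_0,\beta_0)\in\Omega$ and the full rectangle $(0,\alpha_0]\times(0,\beta_0]\setminus\{(\alpha_0,\beta_0)\}$ that intersects $\Omega$ lies inside $\Omega^{\texttt{-}}$.

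Finally, for any wave speed $c$ satisfying \eqref{eq:cc}, put $\alpha := 4a/c^4$ and $\beta := 4b/c^4$ as in \eqref{eq:alpha_beta}. The upper bound $|c|<\sqrt[4]{4a}$ gives $\alpha>1$, while the lower bound $|c|>\sqrt[4]{4b/\mathcal{B}_{x_1,x_2}^{p_1,p_2}(a/b)}$ rearranges to $\beta<\beta_0$; moreover $\alpha/\beta=a/b=q_0$, so
$$
0<\beta<\beta_0\le 1<\alpha=q_0\beta<q_0\beta_0=\alpha_0,
$$
which places $(\alpha,\beta)$ strictly inside the rectangle above and hence in $\Omega^{\texttt{-}}$. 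Applying Theorem \ref{th:main1} produces the desired travelling wave solution of \eqref{eq:original_problem}. I do not expect any real obstacle in the argument: the nontrivial analytical work has already been isolated in Lemma \ref{veta2} (the construction of the Padé-type test function and the asymptotic expansion of the three integrals in terms of $G$, $M$, $N$), and the theorem itself is essentially a dictionary between that lemma and the geometric condition $(\alpha,\beta)\in\Omega^{\texttt{-}}$ supplied by Theorem \ref{th:main1}.
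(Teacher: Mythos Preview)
Your proposal is correct and follows exactly the approach the paper intends: the paper's own proof is the one-line remark that it ``is based on Lemma \ref{veta2} and is an analogue of the proof of Theorem \ref{veta11} with $\mathcal{B}_{x_{1},x_{2}}^{p_{1},p_{2}}$ instead of $\mathcal{B}_{x_{1},x_{2}}$,'' and you have simply written that analogue out in full. The only cosmetic slip is the premature claim that $(\alpha_0,\beta_0)\in\Omega$ before you have deduced $\beta_0\le 1$ from \eqref{mrce1}; this plays no role in the argument, since what you actually need (and correctly establish) is $(\alpha,\beta)\in\Omega^{\texttt{-}}$ for the point determined by $c$.
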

\begin{proof}
The proof is based on Lemma \ref{veta2} and is an analogue of the proof of Theorem \ref{veta11}
with $\mathcal{B}_{x_{1},x_{2}}^{p_{1},p_{2}}$ instead of $\mathcal{B}_{x_{1},x_{2}}$.
\end{proof}

\begin{remark}~
Let us note that for functions $G$, $M$ and $N$ 
in Lemma \ref{veta2}, we have that
\begin{equation} \label{rce88}
  \lim_{p\to 0+} G(p,x) = \tfrac{4 x^{3}}{15},\qquad
  \lim_{p\to 0+} M(p,x) = \tfrac{2x}{3},\qquad
  \lim_{p\to 0+} N(p,x) = \tfrac{2}{x}.
\end{equation}
Thus, the relation \eqref{rce2} reduces to \eqref{rce1} for $p_{1} \to 0+$ and $p_{2} \to 0+$
and moreover, for $x_{1},x_{2} > 0$ and $q>1$, we have
\begin{align*}
  \lim_{\substack{p_{1} \to 0+\\ p_{2} \to 0+}} \mathcal{B}_{x_{1},x_{2}}^{p_{1},p_{2}}(q) = \mathcal{B}_{x_{1},x_{2}}(q).
\end{align*}
\end{remark}


\section{Analytical approximations of $\mathscr{E}$}
\label{sec:approx2}

The previous Section \ref{sec:approx1} provided two different families of subintervals
in \eqref{eq:ccc} and \eqref{eq:cc} 
of the theoretical optimal interval in \eqref{eq:c} for the wave speed $c$. 
To describe unions of these families, it is more convenient to express them as approximations 
of the set $\Omega^{\texttt{-}}$ in $\alpha\beta$ plane. 
Consequently, we reverse the point of view, fix the wave speed $c$ and provide a characterization
of original parameters $a$ and $b$ for which the problem \eqref{eq:original_problem} possesses 
a travelling wave solution.

We start with  
the set of lines \eqref{rce1} introduced in Lemma \ref{veta1} containing
two parameters $x_{1}$ and $x_{2}$, and remove them step by step by constructing suitable envelopes
(see Figure~\ref{obr5} for curves $\mu_{T}$ and $\mu$).

\begin{figure}[h]
\centerline{
  \setlength{\unitlength}{1mm}
  \begin{picture}(140, 85)(0, 0)
    \put(0,0){\includegraphics[width=14.0cm]{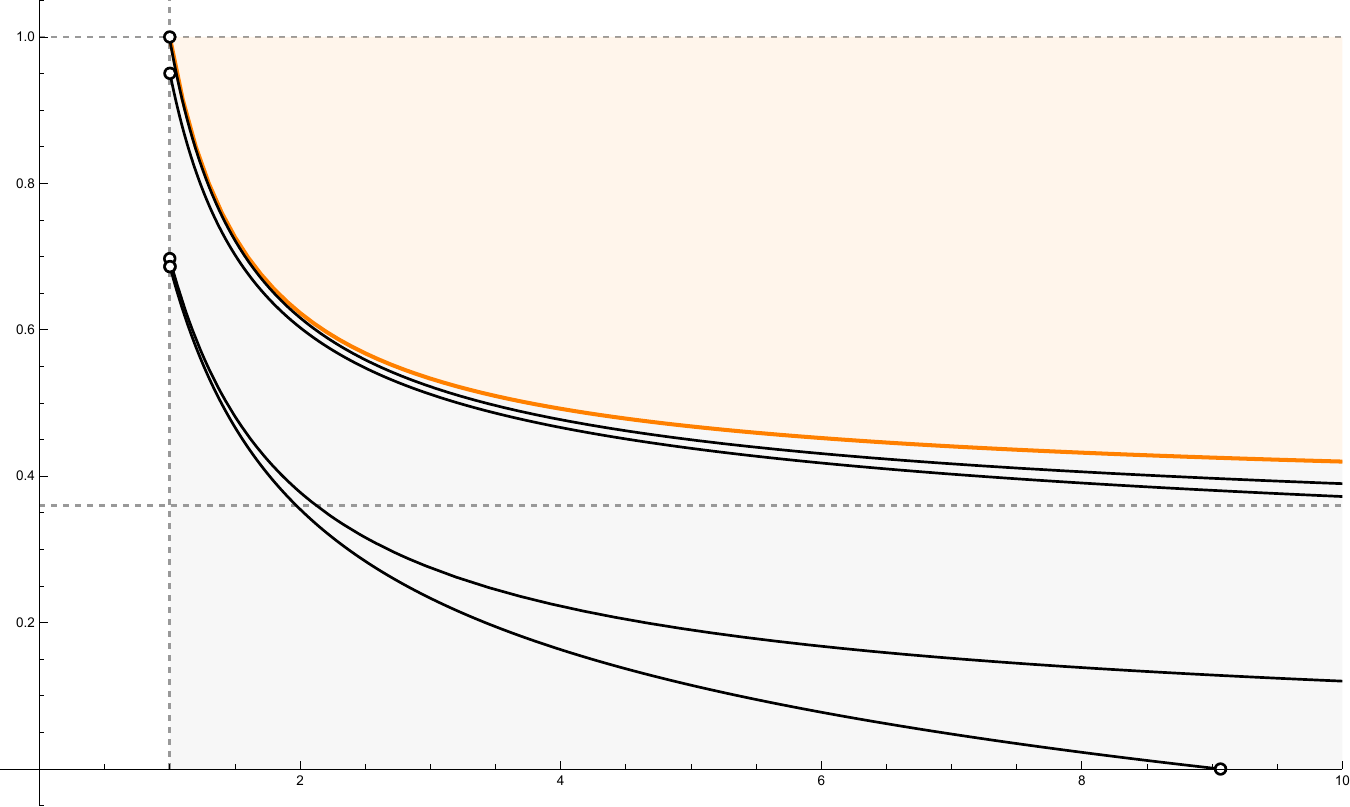}}
    \put(118.5,-1){\makebox(0,0)[lb]{$\alpha$}}         
    \put(-2,70){\makebox(0,0)[lb]{$\beta$}}             
    \put(-2,29.1){\makebox(0,0)[lb]{$\frac{9}{25}$}}                 
    \put(48,15){\makebox(0,0)[lb]{$\mu_{T}$}}     
    \put(48,25){\makebox(0,0)[lb]{$\mu$}}         
    \put(48,35){\makebox(0,0)[lb]{$\eta_{T}^{p_{1}, p_{2}}$}}     
    \put(141.0,32.3){\makebox(0,0)[lb]{$\eta$}}             
    \put(80,56){\makebox(0,0)[lb]{$\Omega^{\texttt{+}}$}}             
    \put(80,23){\makebox(0,0)[lb]{$\Omega^{\texttt{-}}$}}            
    \put(80,40){\makebox(0,0)[lb]{$\mathscr{E}$}}              
  \end{picture}    
  }
\caption{Comparison of the existence results provided 
by Lemmas \ref{col1}, \ref{col3} and \ref{col2} for $T = \frac{20}{3}$,
$p_{1} = \frac{1}{10}$ and $p_{2} = \frac{3}{20}$.}
\label{obr5}
\end{figure}

\begin{lemma} \label{col1}
Let $T>0$ be fixed and let $(\alpha_{0},\beta_{0})\in(0,+\infty)\times(0,+\infty)$ 
be an arbitrary point of the curve $\mu_{T} = \mu_{T}(x_{1})$
given by
\begin{equation} \label{rce55}
  \mu_{T} : 
  \left\{
  \begin{array}{l}
  \alpha = Q\big(x_{1}, \tfrac{T}{2} - x_{1}\big), \\[3pt]
  \beta  = Q\big(\tfrac{T}{2} - x_{1}, x_{1}\big),   
  \end{array}
  \right.
  \quad
  0 < x_{1} < \tfrac{T}{2},\qquad
  Q(x_{1}, x_{2}) := \cfrac{10 x_{1}^{2} x_{2} - 20 x_{1} + 5 x_{2}}{2 x_{1}^{4} x_{2}}.
\end{equation}
Then for all $(\alpha,\beta)\in(0,\alpha_{0}]\times(0,\beta_{0}]$,
 $(\alpha,\beta)\neq(\alpha_{0},\beta_{0})$, 
there exists $v\in H^{2}(\R)$ 
such that $J(\alpha, \beta, v) < 0$.
\end{lemma}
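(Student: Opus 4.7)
The plan is to recognize that $\mu_T$ is precisely the parametric curve traced out on the one-parameter family of lines from Lemma~\ref{veta1} once we impose the periodicity constraint $x_1 + x_2 = T/2$. Once this identification is made, the statement reduces to a direct application of Lemma~\ref{veta1}.

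I would proceed as follows. First, fix an arbitrary $(\alpha_0, \beta_0) \in (0,+\infty)^2$ on $\mu_T$. By the parametric form \eqref{rce55}, there exists $x_1 \in (0, T/2)$ with $\alpha_0 = Q(x_1, T/2 - x_1)$ and $\beta_0 = Q(T/2 - x_1, x_1)$. Setting $x_2 := T/2 - x_1 > 0$, I would then verify by direct substitution that this $(\alpha_0, \beta_0)$ satisfies the constraint \eqref{rce1} of Lemma~\ref{veta1}. The short computation
$$2 x_1^3 \, Q(x_1,x_2) = 10 x_1 - \frac{20}{x_2} + \frac{5}{x_1}, \qquad 2 x_2^3 \, Q(x_2,x_1) = 10 x_2 - \frac{20}{x_1} + \frac{5}{x_2},$$
together with summation, yields exactly $10(x_1+x_2) - 15\bigl(1/x_1 + 1/x_2\bigr)$, which is the right-hand side of \eqref{rce1}.

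With this in place, invoking Lemma~\ref{veta1} with these values of $x_1, x_2, \alpha_0, \beta_0$ immediately delivers the desired conclusion: for every $(\alpha, \beta) \in (0, \alpha_0] \times (0, \beta_0]$ with $(\alpha,\beta) \neq (\alpha_0,\beta_0)$ there exists $v \in H^2(\R)$ such that $J(\alpha, \beta, v) < 0$.

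There is no real obstacle here; the lemma is a repackaging of Lemma~\ref{veta1} along the one-parameter family indexed by $x_1$ with $x_1 + x_2 = T/2$ held fixed. The geometric content motivating the specific form of $Q$, and relevant for the subsequent Lemmas~\ref{col3} and~\ref{col2}, is that $\mu_T$ coincides with the \emph{envelope} of the corresponding family of affine lines in the $\alpha\beta$-plane. That envelope property can be checked by differentiating the line equation \eqref{rce1} with respect to $x_1$ (subject to $x_2 = T/2 - x_1$) and verifying that the resulting tangency condition is also satisfied by the point $(Q(x_1,x_2), Q(x_2,x_1))$; however, this is not logically needed for the statement of Lemma~\ref{col1}, only membership on each individual line is.
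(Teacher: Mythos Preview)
Your proof is correct. Both arguments ultimately rest on the same observation, namely that every point on $\mu_T$ lies on one of the lines \eqref{rce1} with $x_2 = T/2 - x_1$, so Lemma~\ref{veta1} applies. The difference is one of emphasis: the paper derives the parametrization \eqref{rce55} by solving the envelope system $F=0$, $\partial F/\partial x_1 - \partial F/\partial x_2 = 0$ for $(\alpha,\beta)$, thereby identifying $\mu_T$ as the \emph{envelope} of the family of lines; you instead take the parametrization as given and verify by a direct algebraic check that the point $(Q(x_1,x_2),Q(x_2,x_1))$ satisfies \eqref{rce1}. Your route is more economical for the statement at hand, and you rightly note that tangency is not logically required here. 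The paper's envelope derivation, on the other hand, explains where the formula for $Q$ comes from and sets up the machinery reused in Lemmas~\ref{col3} and~\ref{col2}.
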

\begin{proof}
Let us consider the family of lines given by \eqref{rce1}
as a set of curves in $\alpha\beta$ plane with one parameter $x_{1}\in\left(0, \frac{T}{2}\right)$ 
and one condition $x_{2} = \frac{T}{2} - x_{1}$.
Let us denote by 
 $\mu_{T}$  the envelope of this set of lines,
i.e., $\mu_{T}$ consists of all points $(\alpha, \beta)$ for which
\begin{align} \label{rce12}
  F\big(\alpha, \beta, x_{1}, \tfrac{T}{2} - x_{1}\big) = 0,\qquad
  \tfrac{\partial F}{\partial x_{1}}\big(\alpha, \beta, x_{1}, \tfrac{T}{2} - x_{1}\big) - 
  \tfrac{\partial F}{\partial x_{2}}\big(\alpha, \beta, x_{1}, \tfrac{T}{2} - x_{1}\big) = 0,  
\end{align}
where $F(\alpha, \beta, x_{1}, x_{2}) :=
\alpha \cdot 2x_{1}^{3} + \beta \cdot 2x_{2}^{3} - 
10(x_{1} + x_{2}) + 15\left(1/x_{1} + 1/x_{2}\right)$.  
The system \eqref{rce12} can be directly solved for ($\alpha, \beta)$ 
and results in the parametrization \eqref{rce55} of $\mu_{T}$.
\end{proof}

\begin{lemma} \label{col3}
Let $\alpha_{0}, \beta_{0} > 0$ be such that 
\begin{align}
  \label{rce77}
  \beta_{0} = \frac{675\, \alpha_{0} + 125 + \sqrt{125(18\,\alpha_{0} + 5)^{3}}}{2916\, \alpha_{0}^{2}}.
\end{align}
Then for all $(\alpha,\beta)\in(0,\alpha_{0}]\times(0,\beta_{0}]$,
$(\alpha,\beta)\neq(\alpha_{0},\beta_{0})$, 
there exists $v\in H^{2}(\R)$ 
such that $J(\alpha, \beta, v) < 0$.
\end{lemma}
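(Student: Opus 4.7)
The plan is to recognize relation \eqref{rce77} as the envelope of the two-parameter family of lines \eqref{rce1} introduced in Lemma \ref{veta1}, now with both parameters $x_1, x_2 > 0$ varying freely, as opposed to Lemma \ref{col1} where the constraint $x_1 + x_2 = T/2$ eliminated only one of them. Once a point $(\alpha_0, \beta_0)$ satisfying \eqref{rce77} is identified with a point on this envelope, \eqref{rce1} holds for some admissible $x_1, x_2 > 0$, and Lemma \ref{veta1} immediately produces the required $v \in H^2(\R)$ with $J(\alpha,\beta,v) < 0$.

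Concretely, setting $F(\alpha,\beta,x_1,x_2) := 2\alpha x_1^3 + 2\beta x_2^3 - 10(x_1+x_2) + 15(1/x_1 + 1/x_2)$, the envelope is characterized by $F = 0$, $\partial F/\partial x_1 = 0$ and $\partial F/\partial x_2 = 0$. The two derivative conditions decouple into $6\alpha x_1^4 = 10 x_1^2 + 15$ and $6\beta x_2^4 = 10 x_2^2 + 15$, which I would rewrite as $2\alpha x_1^3 = 10 x_1/3 + 5/x_1$ and $2\beta x_2^3 = 10 x_2/3 + 5/x_2$. Substituting into $F = 0$ and simplifying collapses everything to the remarkably clean relation $x_1 x_2 = 3$.

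Taking the positive root $u := x_1^2 = (5 + \sqrt{25 + 90\alpha})/(6\alpha)$ and using $x_2 = 3/x_1$, so that $x_2^2 = 9/u$, I would express $\beta = (10 x_2^2 + 15)/(6 x_2^4) = 5 u(6+u)/162$. Expanding $30u + 5u^2$, collecting terms over the common denominator $18\alpha^2$, and rewriting $5(18\alpha+5)\sqrt{25+90\alpha}$ as $5\sqrt{5}\,(18\alpha+5)^{3/2} = \sqrt{125(18\alpha+5)^3}$ produces precisely the compact form \eqref{rce77}. Conversely, every $(\alpha_0,\beta_0)$ satisfying \eqref{rce77} arises from a unique $x_1 > 0$ together with $x_2 = 3/x_1 > 0$, so that \eqref{rce1} holds, and the conclusion then follows directly from Lemma \ref{veta1}.

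The envelope derivation and the invocation of Lemma \ref{veta1} are conceptually routine; the only real obstacle is the careful algebraic bookkeeping needed to pass from the parametric description in $u$ to the explicit closed form \eqref{rce77}, where exploiting the identity $\sqrt{25+90\alpha} = \sqrt{5}\sqrt{5+18\alpha}$ (together with $125 = (5\sqrt{5})^2$) is what reveals the clean structure on the right-hand side.
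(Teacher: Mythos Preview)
Your proof is correct and follows essentially the same approach as the paper: both identify \eqref{rce77} as the envelope of the family of lines \eqref{rce1} (yielding the key relation $x_1 x_2 = 3$) and then invoke Lemma~\ref{veta1}. The only cosmetic difference is that the paper obtains $\mu$ as the envelope of the one-parameter family $\{\mu_T\}_{T>0}$ from Lemma~\ref{col1} (an envelope of envelopes), whereas you compute the two-parameter envelope directly; your explicit algebra deriving the closed form \eqref{rce77} from the parametrization $\beta = 5u(6+u)/162$ is a detail the paper merely asserts.
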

\begin{proof}
Let us consider the set of curves $\mu_{T}$ 
in $\alpha\beta$ plane with one parameter $T>0$,
introduced in \eqref{rce55} in Lemma \ref{col1}.
It is straightforward to check that the envelope of this set is
the following curve
\begin{equation} \label{krivka}
  \mu : 
  \left\{
  \begin{array}{l}
  \alpha = Q\left(x_{1}, \frac{3}{x_{1}}\right), \\[6pt]
  \beta  = Q\left(\frac{3}{x_{1}}, x_{1}\right),   
  \end{array}
  \right.
  \quad
  x_{1} > 0,
\end{equation}
where $Q$ is the rational function defined in \eqref{rce55}.
Indeed, the curve $\mu$ is tangent to all curves $\mu_{T}$ since
for $x_{1} > 0$ and $T = \frac{6}{x_{1}} + 2 x_{1}$, we have that 
\begin{align*}
\mu(x_{1}) = \mu_{T}(x_{1}),\quad
\mu'(x_{1}) = \tfrac{9 + 3 x_{1}^{2}}{9 + x_{1}^{4}}\cdot \mu_{T}'(x_{1}).
\end{align*}
Finally, for $\alpha_{0}, \beta_{0} > 0$, the relation \eqref{rce77} is equivalent to 
$(\alpha_{0}, \beta_{0}) = \mu(x_{1})$ with 
some $x_{1} > 0$.
\end{proof}


Lemma \ref{col3} enables us to state the following alternative existence result 
for the original problem~\eqref{eq:original_problem}.
For the illustration, see Figure \ref{fig:main_222}.

\begin{theorem} \label{veta22}
Let $c\neq 0$ be arbitrary but fixed.
Then the travelling wave solution of \eqref{eq:original_problem}
with the wave speed $c$ exists for any $(a,b)$ satisfying
\begin{equation} \label{podm8}
   a > \frac{c^{4}}{4},\quad
   0 < b < \frac{c^{4}}{4} \cdot 
   P\left(\frac{4a}{c^{4}}\right)
\end{equation}
with $P$ given by
\begin{equation} \label{funkceF}
P(\alpha) := \frac{675 \, \alpha + 125 + \sqrt{125 \, (18 \, \alpha + 5)^{3}}}{2916 \, \alpha^{2}}.
\end{equation}
\end{theorem}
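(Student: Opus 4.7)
The strategy is a direct translation of Lemma \ref{col3} from the $\alpha\beta$-plane into the original parameters $(a,b,c)$, with $c$ held fixed rather than $(a,b)$. Since the scaling \eqref{eq:alpha_beta} is bijective in each direction, the hypothesis on $(a,b)$ is equivalent to a hypothesis on $(\alpha,\beta)$ that puts the pair just below the envelope curve described by \eqref{rce77}, and the conclusion follows from Theorem \ref{th:main1}.

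Concretely, I would introduce $\alpha := 4a/c^{4}$ and $\beta := 4b/c^{4}$. The condition $a > c^{4}/4$ becomes $\alpha > 1$, and the condition $0 < b < (c^{4}/4)\, P(4a/c^{4})$ becomes $0 < \beta < P(\alpha)$. I would then set $\alpha_{0} := \alpha$ and $\beta_{0} := P(\alpha)$; by the very definition \eqref{funkceF} of $P$, the pair $(\alpha_{0},\beta_{0})$ satisfies relation \eqref{rce77}, and positivity of $P$ on $(1,+\infty)$ is immediate from the formula. Lemma \ref{col3} then applies: since $0 < \alpha \le \alpha_{0}$, $0 < \beta < \beta_{0}$ and in particular $(\alpha,\beta) \ne (\alpha_{0},\beta_{0})$, there exists $v \in H^{2}(\R)$ with $J(\alpha,\beta,v) < 0$. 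This membership $(\alpha,\beta) \in \Omega^{\texttt{-}}$ implicitly forces $\beta < 1$ (otherwise Remark \ref{rem:omega}(2) would contradict the existence of such $v$), so $(\alpha,\beta) \in \Omega$ is automatic and no separate check is required.

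With $(4a/c^{4},4b/c^{4}) \in \Omega^{\texttt{-}}$, an application of Theorem \ref{th:main1} produces the travelling wave solution of \eqref{eq:original_problem} with the prescribed wave speed $c$. There is essentially no obstacle in this last step: all the genuine analytic work — the polynomial semi-wave ansatz \eqref{rce11}--\eqref{rce82}, the asymptotic expansion \eqref{ref10}, and the two successive envelope eliminations producing first $\mu_{T}$ in \eqref{rce55} and then $\mu$ in \eqref{krivka}, whose closed form is \eqref{rce77} — is already encapsulated in Lemma \ref{col3}. The present theorem is therefore a short wrapper that relabels the $(\alpha,\beta)$ envelope bound as an $(a,b)$ admissibility region for a fixed $c$.
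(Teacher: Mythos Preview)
Your proposal is correct and follows essentially the same route as the paper's own proof: set $\alpha_{0}=4a/c^{4}$, $\beta_{0}=P(\alpha_{0})$, invoke Lemma~\ref{col3} to obtain $v$ with $J(\alpha,\beta,v)<0$, use Remark~\ref{rem:omega}(2) to ensure $\beta<1$ so that $(\alpha,\beta)\in\Omega^{\texttt{-}}$, and finish with Theorem~\ref{th:main1}. The only cosmetic difference is that the paper first argues $\beta_{0}\le 1$ and then deduces $\beta<1$, whereas you deduce $\beta<1$ directly; the content is identical.
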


\begin{proof}
Let us denote $\alpha_{0} := 4a/c^{4}$ and $\beta_{0} := P(\alpha_{0})$.
Then conditions in \eqref{podm8} imply that 
\begin{equation} \label{rce99}
  0 < \frac{4 b}{c^{4}} < \beta_{0} \quad \text{and} \quad 1 < \frac{4a}{c^{4}} = \alpha_{0}
\end{equation}
and using Lemma \ref{col3}, we get that  for all $(\alpha,\beta)\in(0,\alpha_{0}]\times(0,\beta_{0}]$,
$(\alpha,\beta)\neq(\alpha_{0},\beta_{0})$, 
there exists $v\in H^{2}(\R)$ such that $J(\alpha_{0},\beta, v) < 0$.
Thus, using \eqref{mrce1} in Remark \ref{rem:omega}, we may conclude that $\beta_{0} \le 1$.
Now, let us take $\beta := 4b/c^{4}$ and using \eqref{rce99}, we have that
$0 < \beta < \beta_{0} \le 1 < \alpha_{0}$,
which implies that $(\alpha_{0},\beta)\in\Omega^{\texttt{-}}$.
Our statement then follows from Theorem \ref{th:main1}.
\end{proof}


\begin{figure}[t]
\centerline{
  \setlength{\unitlength}{1mm}
  \begin{picture}(66, 66)(-7,-6)
    \put(0,0){\includegraphics[height=6.0cm]{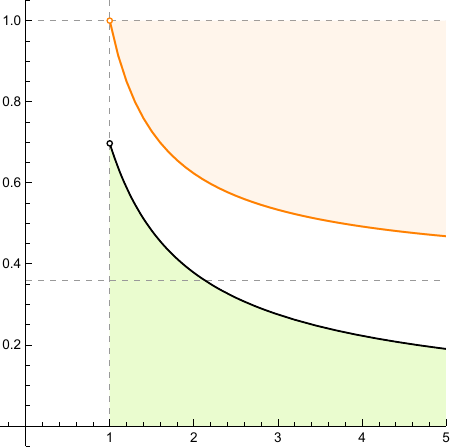}}
    \put(50,-4){\makebox(0,0)[lb]{\footnotesize$\tfrac{4}{c^{4}}\cdot a$}}         
    \put(-7,50){\makebox(0,0)[lb]{\footnotesize$\tfrac{4}{c^{4}}\cdot b$}}        
    \put(34,45){\makebox(0,0)[lb]{\footnotesize$\Omega^{\texttt{+}}$}}            
    \put(34,34){\makebox(0,0)[lb]{\footnotesize$\mathscr{E}$}}                
    \put(34,15.5){\makebox(0,0)[lb]{\footnotesize$\mu$}}                    
    \put(-4,20.5){\makebox(0,0)[lb]{\footnotesize$\frac{9}{25}$}}                    
  \end{picture}    
\hspace{12pt}  
  \begin{picture}(66, 66)(-7,-6)
    \put(0,0){\includegraphics[height=6.0cm]{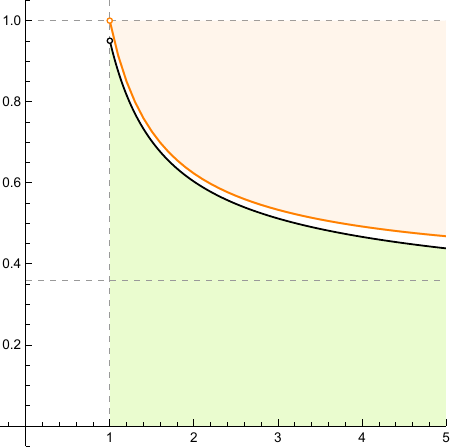}}
    \put(50,-4){\makebox(0,0)[lb]{\footnotesize$\tfrac{4}{c^{4}}\cdot a$}}         
    \put(-7,50){\makebox(0,0)[lb]{\footnotesize$\tfrac{4}{c^{4}}\cdot b$}}        
    \put(34,45){\makebox(0,0)[lb]{\footnotesize$\Omega^{\texttt{+}}$}}            
    \put(34,34){\makebox(0,0)[lb]{\footnotesize$\mathscr{E}$}}              
    \put(34,25.5){\makebox(0,0)[lb]{\footnotesize$\eta_{T}^{p_{1},p_{2}}$}}                          
    \put(-4,20.5){\makebox(0,0)[lb]{\footnotesize$\frac{9}{25}$}}                    
  \end{picture}    
  }
\caption{
Comparison of the existence results provided 
by Theorem \ref{veta22} (left) and Theorem \ref{veta33} (right) 
for $T = \frac{20}{3}$,
$p_{1} = \frac{1}{10}$ and $p_{2} = \frac{3}{20}$.
For a pair $\left(4a/c^{4}, 4b/c^{4}\right)$ in the green area, 
the existence of the travelling wave solution of \eqref{eq:original_problem} is guaranteed.
}
\label{fig:main_222}
\end{figure}

\begin{remark}~
\begin{enumerate}
\item
Let us note that in $\Omega$, the graph of the function $P$ defined in \eqref{funkceF}  
and the curve $\mu$ given by \eqref{krivka} coincide (see Figure \ref{fig:main_222}, left)
and can be interpreted as a lower approximation of $\mathscr{E}$, which is rough but simple to evaluate.
\item
The graph of the function $P$ can be described using the function 
$\mathcal{B}_{x_{1},x_{2}}$ defined by \eqref{eq:bx1x2} in Theorem \ref{veta11}.
For this purpose, let us consider the curve $\mu$ with the parametrization given by
\eqref{krivka} with $x_{1} = \sqrt{3 s}$, $s > 0$, and define the function
$\mathcal{Q}: s \mapsto \alpha/\beta$, where $(\alpha,\beta) = \mu\big(\sqrt{3s}\big)$,
i.e.,
\begin{equation} \label{pos1}
  \mathcal{Q}(s) :=  
  \frac{Q\left(\sqrt{3s}, \frac{3}{\sqrt{3s}}\right)}{Q\left(\frac{3}{\sqrt{3s}}, \sqrt{3s}\right)} = 
  \frac{2s + 1}{s^{3}(s + 2)},
  \quad s > 0.
\end{equation}
The function $\mathcal{Q}$ is strictly decreasing and maps $(0,+\infty)$ onto $(0,+\infty)$.
Thus, for any $q > 0$, there exists exactly one $s > 0$ such that $\mathcal{Q}(s) = q$.
Now, for fixed $s > 0$, it is straightforward to verify that
$\mathcal{B}_{x_{1},x_{2}}(\mathcal{Q}(s))$ attains its maximum 
for $x_{1} = \sqrt{3s}$ and $x_{2}=\frac{3}{\sqrt{3s}}$.
Finally, if we define
\begin{equation} \label{pos2}
  \mathcal{B}^{*}(s) :=
  \sup_{x_{1},x_{2} > 0}\mathcal{B}_{x_{1},x_{2}}(\mathcal{Q}(s)) = 
  \mathcal{B}_{\sqrt{3s},\frac{3}{\sqrt{3s}}}(\mathcal{Q}(s)) = \tfrac{5}{18}s(s + 2),\quad s > 0,
\end{equation}
then we may conclude that 
the graph of the function $P = P(\alpha)$ for $\alpha > 0$ is the set
$$\left\{\left( \mathcal{Q}(s)\mathcal{B}^{*}(s), \mathcal{B}^{*}(s) \right): s > 0\right\}.$$
Indeed, it is easy to check that $P(\mathcal{Q}(s)\mathcal{B}^{*}(s)) = \mathcal{B}^{*}(s)$ for all $s > 0$.
\item Using functions $\mathcal{Q}$ and $\mathcal{B}^{*}$ defined in 
\eqref{pos1} and \eqref{pos2}, respectively,
it is possible to characterize the union of the family of intervals in \eqref{eq:ccc}
in Theorem \ref{veta11} as
\begin{align*}
  \bigcup_{x_{1},x_{2} > 0}
  \left( \sqrt[4]{\frac{4b}{\mathcal{B}_{x_{1},x_{2}}(a/b)}}, \sqrt[4]{4a} \right)
  =
  \left( \sqrt[4]{\frac{4b}{\mathcal{B}^{*}\left(\mathcal{Q}^{-1}(a/b)\right)}}, \sqrt[4]{4a} \right)
  \subset
  \left( \sqrt[4]{\frac{4b}{\beta^{\ast}(a/b)}}, \sqrt[4]{4a} \right).
\end{align*}
\end{enumerate}
\end{remark}
Similarly as in Section \ref{sec:approx1}, a finer result in Lemma \ref{veta2} enables us to obtain a better approximation of $\mathscr{E}$. 
In particular, the following assertion describes the envelope of the family of lines given by \eqref{rce2}
(see Figure~\ref{obr5} for the curve $\eta_{T}^{p_{1},p_{2}}$).

\begin{lemma} \label{col2}
Let $T,p_{1},p_{2} > 0$ be fixed and let 
$(\alpha_{0},\beta_{0})\in(0,+\infty)\times(0,+\infty)$ 
be a point of the curve 
$\eta_{T}^{p_{1},p_{2}} = \eta_{T}^{p_{1},p_{2}}(x_1)$
given by
\begin{equation*}
  \eta_{T}^{p_{1},p_{2}} : 
  \left\{
  \begin{array}{l}
  \alpha = R^{p_{1},p_{2}}\big(x_{1}, \tfrac{T}{2} - x_{1}\big), \\[3pt]
  \beta  = R^{p_{2},p_{1}}\big(\tfrac{T}{2} - x_{1}, x_{1}\big),   
  \end{array}
  \right.
  \quad
  0 < x_{1} < \tfrac{T}{2},
\end{equation*}
where
\begin{align*}
  R^{p_{1},p_{2}}(x_{1}, x_{2}) := & 
  \frac{G_{x}(p_{2},x_{2})
  \left(2M(p_{1},x_{1}) + 2M(p_{2},x_{2}) - N(p_{1},x_{1}) - N(p_{2},x_{2})\right)}
  {G(p_{1},x_{1})G_{x}(p_{2},x_{2}) + G_{x}(p_{1},x_{1}) G(p_{2},x_{2})} + \\
  &
  \frac{G(p_{2},x_{2}) 
  \left(
  2M_{x}(p_{1},x_{1}) - 
  2M_{x}(p_{2},x_{2}) -
   N_{x}(p_{1},x_{1}) +
   N_{x}(p_{2},x_{2})\right)}  
  {G(p_{1},x_{1})G_{x}(p_{2},x_{2}) + G_{x}(p_{1},x_{1}) G(p_{2},x_{2})},
\end{align*}
functions $G$, $M$ and $N$ are given in Theorem \ref{veta2} 
and $G_{x}$, $M_{x}$ and $N_{x}$ denote their first-order 
partial derivatives with respect to the second variable $x$.

Then for all $(\alpha,\beta)\in(0,\alpha_{0}]\times(0,\beta_{0}]$,
$(\alpha,\beta)\neq(\alpha_{0},\beta_{0})$, 
there exists $v\in H^{2}(\R)$ 
such that $J(\alpha, \beta, v) < 0$.
\end{lemma}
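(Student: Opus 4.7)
My plan is to mirror the proof of Lemma~\ref{col1}, with the family \eqref{rce1} replaced by the richer family \eqref{rce2}. First I would define
\[
\tilde F(\alpha,\beta,x_1,x_2) := G(p_1,x_1)\alpha + G(p_2,x_2)\beta - 2M(p_1,x_1) - 2M(p_2,x_2) + N(p_1,x_1) + N(p_2,x_2),
\]
so that each pair with $x_1\in(0,T/2)$ and $x_2 = T/2 - x_1$ yields a line $\tilde F(\alpha,\beta,x_1,x_2)=0$ in the $\alpha\beta$-plane. The curve $\eta_T^{p_1,p_2}$ is, by construction, the envelope of this one-parameter family.

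Next I would compute the envelope via the two conditions
\[
\tilde F(\alpha,\beta,x_1,T/2-x_1) = 0, \qquad \frac{\partial \tilde F}{\partial x_1}(\alpha,\beta,x_1,T/2-x_1) - \frac{\partial \tilde F}{\partial x_2}(\alpha,\beta,x_1,T/2-x_1) = 0,
\]
the second arising from $\frac{d}{dx_1}[\tilde F(\alpha,\beta,x_1,T/2-x_1)]=0$ via the chain rule, since $\frac{d}{dx_1}(T/2-x_1)=-1$. The resulting system is linear in $(\alpha,\beta)$, with coefficient matrix built from $G$ and $G_x$ and right-hand side built from $M$, $M_x$, $N$, $N_x$. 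Applying Cramer's rule yields the explicit parametrization; the symmetry $\tilde F(\alpha,\beta,x_1,x_2) = \tilde F(\beta,\alpha,x_2,x_1)$ under the simultaneous swap $p_1\leftrightarrow p_2$ and $x_1\leftrightarrow x_2$ explains why the formula for $\beta$ is obtained from that for $\alpha$ by exchanging indices, matching the expression $R^{p_2,p_1}(T/2-x_1,x_1)$ in the statement.

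The existence claim is then immediate from Lemma~\ref{veta2}: any point $(\alpha_0,\beta_0)$ on $\eta_T^{p_1,p_2}$ satisfies \eqref{rce2} for its corresponding $(x_1,T/2-x_1)$, so Lemma~\ref{veta2} produces the desired $v\in H^2(\R)$ with $J(\alpha,\beta,v)<0$ for every $(\alpha,\beta)\in(0,\alpha_0]\times(0,\beta_0]\setminus\{(\alpha_0,\beta_0)\}$.

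The main obstacle is computational rather than conceptual: deriving clean closed forms for $G_x$, $M_x$, $N_x$ from the arctangent expressions, carrying the Cramer's rule algebra through without error, and verifying that the denominator $G(p_1,x_1)G_x(p_2,x_2)+G_x(p_1,x_1)G(p_2,x_2)$ does not vanish on $(0,T/2)$, so that $\eta_T^{p_1,p_2}$ is a genuine smooth curve. The latter should reduce to the monotonicity of $x\mapsto G(p,x)$, which follows in turn from positivity of the auxiliary function $g(u) = u(u^2+3) + (u^2-3)(u^2+1)\arctan u$ already established at the end of the proof of Lemma~\ref{veta2}.
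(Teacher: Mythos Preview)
Your proposal is correct and follows essentially the same approach as the paper: define the function $F$ (your $\tilde F$) from the line equation \eqref{rce2}, impose the envelope conditions \eqref{rce12} with $x_2 = T/2 - x_1$, solve the resulting linear system for $(\alpha,\beta)$, and then invoke Lemma~\ref{veta2} for the existence claim. The paper's own proof is in fact even terser than yours---it simply states that one proceeds as in Lemma~\ref{col1} with the new $F$ and does not spell out Cramer's rule, the symmetry under $(p_1,x_1)\leftrightarrow(p_2,x_2)$, or the nonvanishing of the denominator; one minor caution is that positivity of $g$ from Lemma~\ref{veta2} gives $G>0$ rather than $G_x>0$, so the monotonicity of $x\mapsto G(p,x)$ would require a separate (though routine) check.
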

\begin{proof}
We proceed similarly as in the proof of Lemma~\ref{col1}.
That is, we construct the envelope $\eta_{T}^{p_{1},p_{2}}$ of the set of lines given by \eqref{rce2} 
by solving the system \eqref{rce12} for $(\alpha,\beta)$ with $F$ given by
$$
F(\alpha, \beta, x_{1}, x_{2}) :=
\alpha\cdot G(p_{1},x_{1}) + \beta\cdot G(p_{2},x_{2})
- 2 M(p_{1},x_{1}) - 2 M(p_{2},x_{2}) + N(p_{1},x_{1}) + N(p_{2},x_{2}).
$$ 
\end{proof}

Finally, the following theorem substantially improves the existence 
result in Theorem \ref{veta22} (see Figure~\ref{fig:main_222}).

\begin{theorem} \label{veta33}
Let $T,p_{1},p_{2} > 0$ and let $c\neq 0$ be arbitrary but fixed.
Then the travelling wave solution of \eqref{eq:original_problem}
with the wave speed $c$ exists for any $(a,b)$ satisfying
\begin{equation}
   \frac{c^{4}}{4} < a = \frac{c^{4}}{4}\cdot R^{p_{1},p_{2}}\big(x_{1}, \tfrac{T}{2} - x_{1}\big),\quad
   0 < b < \frac{c^{4}}{4} \cdot R^{p_{2},p_{1}}\big(\tfrac{T}{2} - x_{1}, x_{1}\big),
\end{equation}
where $x_{1}\in\left(0,\frac{T}{2}\right)$ and 
the function $R^{p_{1},p_{2}}$ is given in Lemma \ref{col2}.
\end{theorem}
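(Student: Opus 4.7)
The plan is to mirror the argument of Theorem \ref{veta22}, merely replacing the cruder envelope $\mu$ (from Lemma \ref{col3}) by the finer envelope $\eta_T^{p_1,p_2}$ supplied by Lemma \ref{col2}. First I would translate the hypothesis from the $(a,b)$ variables to the $(\alpha,\beta)$ variables via the rescaling \eqref{eq:alpha_beta}: setting $\alpha_0 := 4a/c^4$ and $\beta_0 := R^{p_2,p_1}\!\big(T/2 - x_1,\, x_1\big)$, the first condition of the theorem rewrites as $\alpha_0 = R^{p_1,p_2}\!\big(x_1,\, T/2 - x_1\big)$, so that by construction the point $(\alpha_0,\beta_0)$ lies on the envelope curve $\eta_T^{p_1,p_2}$. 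Lemma \ref{col2} then yields, for every $(\alpha,\beta)\in(0,\alpha_0]\times(0,\beta_0]$ with $(\alpha,\beta)\ne(\alpha_0,\beta_0)$, a function $v \in H^2(\R)$ with $J(\alpha,\beta,v)<0$.

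Next I would establish the automatic upper bound $\beta_0 \le 1$. Since $\alpha_0 = 4a/c^4 > 1$ by the first hypothesis and Lemma \ref{col2} supplies some $v$ with $J(\alpha_0,\beta,v)<0$ for any $\beta$ slightly below $\beta_0$, the estimate \eqref{mrce1} in Remark \ref{rem:omega}(2) (which forces $J \ge 0$ whenever $\alpha>1$ and $\beta \ge 1$) implies $\beta < 1$; letting $\beta \nearrow \beta_0$ gives $\beta_0 \le 1$. This is what makes the construction consistent with the restriction to $\Omega = (1,+\infty)\times(0,1)$ built into the definition of $\Omega^{\texttt{-}}$.

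Finally I would set $\beta := 4b/c^4$ and check that $(\alpha_0,\beta)\in\Omega^{\texttt{-}}$. The strict inequality $b < \tfrac{c^4}{4}\, R^{p_2,p_1}\!\big(T/2 - x_1,\, x_1\big)$ yields $0<\beta<\beta_0\le 1<\alpha_0$, so the pair $(\alpha_0,\beta)$ lies in $\Omega$ and, because $(\alpha_0,\beta)\ne(\alpha_0,\beta_0)$, Lemma \ref{col2} produces some $v\in H^2(\R)$ with $J(\alpha_0,\beta,v)<0$. Hence $(\alpha_0,\beta)\in\Omega^{\texttt{-}}$, and Theorem \ref{th:main1} delivers the existence of the travelling wave solution of \eqref{eq:original_problem} with speed $c$.

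I do not expect a serious obstacle: essentially all the analytic work — the construction of the polynomial-in-$n$ expansion for $J(\alpha,\beta,\breve v_n)$ with rational semi-waves, and the explicit parametrisation of the envelope — has been absorbed into Lemmas \ref{veta2} and \ref{col2}. The only point that warrants a sentence of care is ensuring the final pair $(\alpha_0, 4b/c^4)$ does not coincide with the corner $(\alpha_0,\beta_0)$ of the rectangle on which Lemma \ref{col2} applies; this is automatic from the strict inequality in the hypothesis for $b$.
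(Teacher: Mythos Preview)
Your proposal is correct and follows precisely the approach of the paper: the paper's proof consists of a single sentence stating that one proceeds analogously to Theorem \ref{veta22}, taking $\alpha_{0} := R^{p_{1},p_{2}}\big(x_{1}, \tfrac{T}{2} - x_{1}\big)$ and $\beta_{0} := R^{p_{2},p_{1}}\big(\tfrac{T}{2} - x_{1}, x_{1}\big)$ and invoking Lemma \ref{col2} in place of Lemma \ref{col3}. You have simply written out that analogy in full, including the same use of \eqref{mrce1} to force $\beta_{0}\le 1$ and the concluding appeal to Theorem \ref{th:main1}.
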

\begin{proof}
The proof is based on Lemma \ref{col2} and is analogous to the proof of Theorem \ref{veta22},
where we take 
$\alpha_{0} := R^{p_{1},p_{2}}\big(x_{1}, \frac{T}{2} - x_{1}\big)$
and
$\beta_{0} := R^{p_{2},p_{1}}\big(\frac{T}{2} - x_{1}, x_{1}\big)$.
\end{proof}

\begin{remark}
For $x_{1},x_{2} > 0$, we have that
\begin{align*}
  \lim_{\substack{p_{1} \to 0+\\ p_{2} \to 0+}} R^{p_{1},p_{2}}(x_{1},x_{2}) = Q(x_{1},x_{2})
\end{align*}
and thus, for fixed $T > 0$ and $0 < x_{1} < \frac{T}{2}$, we get that the point 
$\eta_{T}^{p_{1},p_{2}}(x_{1})$ converges to $\mu_{T}(x_{1})$ for 
$p_{1} \to 0+$ and 
$p_{2} \to 0+$.
\end{remark}

\begin{remark}
Let us consider lines given by \eqref{rce2} with $x_{2} = \frac{T}{2} - x_{1}$
as a set of curves in $\alpha\beta$~plane with four parameters $x_{1}, p_{1}, p_{2}, T$ represented by
$F(\alpha, \beta, x_{1}, p_{1}, p_{2}, T) = 0$,
where
\begin{align*}
  F(\alpha, \beta, x_{1}, p_{1}, p_{2}, T) :=\ & 
  \alpha\cdot G(p_{1}, x_{1}) + 
  \beta\cdot G\big(p_{2}, \tfrac{T}{2} - x_{1}\big) - \\
  & 2 M(p_{1}, x_{1}) - 2 M\big(p_{2}, \tfrac{T}{2} - x_{1}\big) + 
  N(p_{1}, x_{1}) + N\big(p_{2}, \tfrac{T}{2} - x_{1}\big).
\end{align*}
The envelope $\eta$ of this four-parameter set of lines is given by
\begin{align} \label{rce91}
  F(\alpha, \beta, x_{1}, p_{1}, p_{2}, T) = 0,\quad
  \tfrac{\partial F}{\partial x_{1}} = 0,\quad
  \tfrac{\partial F}{\partial p_{1}} = 0,\quad
  \tfrac{\partial F}{\partial p_{2}} = 0,\quad
  \tfrac{\partial F}{\partial T} = 0,         
\end{align}
where all partial derivatives are evaluated at $(\alpha, \beta, x_{1}, p_{1}, p_{2}, T)$.
For fixed $x_{1} > 0$, the system \eqref{rce91} can be solved numerically to get
the approximation of one point $(\alpha,\beta)$ of the envelope $\eta$ 
(see~Figure~\ref{obr5}).
Let us point out that any point $(\alpha_{0}, \beta_{0})$ of the envelope $\eta$ 
is also the point of a line in \eqref{rce2} and thus, using Lemma \ref{veta2},
for all $\beta \in(0, \beta_{0})$ there exists $v\in H^{2}(\R)$ such that 
$J(\alpha_{0}, \beta, v) < 0$. Thus, we may conclude that $\eta$ is a lower approximation of $\mathscr{E}$.
\end{remark}


\section{Characterization of $\beta^{\ast}$ using the periodic problem}
\label{sec:conjecture}

As we have stated above, the exact characterization \eqref{eq:beta_sup_max} of $\beta^{\ast}$ 
(and hence $\mathscr{E}$) is not suitable for computation, since 
it is not easy to obtain $\Sigma^{\texttt{D}}_r$ either. 
An alternative and much more straightforward approach to the description of $\beta^{\ast}$
is inspired by the properties of the eigenvalues $\lambda^{\texttt{D}}_{n,r}$ of the Dirichlet problem \eqref{eq:eigen}
and the eigenvalues $\lambda^{\texttt{P}}_{n,T}$ of the periodic problem
\begin{equation}
\label{eq:eigen_periodic}
\left\{ \begin{array}{l}
v^{(4)} + 2v'' + \lambda v = 0, \quad x \in \R,\\[3pt]
v(x + T) = v(x).
\end{array} \right.
\end{equation}
Namely, the following statement holds true (cf. \ref{sec:eigen}).
\begin{lemma}
\label{lemma:eigen_sup}
We have
$$
\lim_{q\to 1+} \beta^{\ast}(q) = \sup_{r>0} \max_{n\in\N} \lambda^\textup{\texttt{D}}_{n,r} = \lim_{r \to +\infty} \lambda^\textup{\texttt{D}}_{1,r} = 1 = \sup_{T>0} \max_{n \in \N_0} \lambda^\textup{\texttt{P}}_{n,T} = \lambda^\textup{\texttt{P}}_{1,2\pi}.
$$
Similarly, for the (even) eigenfunction $v^{\mathtt{D}}_{1,r}$ 
corresponding to $\lambda^\textup{\texttt{D}}_{1,r}$, we have
$\lim_{r \to +\infty} v^{\mathtt{D}}_{1,r}(x) = \cos x $
which coincides with the eigenfunction 
corresponding to $\lambda^\textup{\texttt{P}}_{1,2\pi}$.
\end{lemma}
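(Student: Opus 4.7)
The plan is to prove the chain of five equalities essentially in the order right-to-left. I would begin with the Fourier-theoretic block $\sup_{T>0} \max_{n \in \N_0} \lambda^{\texttt{P}}_{n,T} = \lambda^{\texttt{P}}_{1,2\pi} = 1$. Standard Fourier analysis of \eqref{eq:eigen_periodic} gives the eigenvalues $\lambda_{n,T} = 2(2\pi n/T)^2 - (2\pi n/T)^4$ with eigenfunctions $\cos(2\pi n x/T)$ and $\sin(2\pi n x/T)$. Since $\omega \mapsto 2\omega^2 - \omega^4$ attains its global maximum $1$ at $\omega = 1$, the supremum over $(n,T)$ equals $1$, attained precisely when $T = 2\pi n$, the smallest such period being $T = 2\pi$ at $n=1$ with eigenfunction $\cos x$.

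Next I would establish $\sup_{r>0} \max_{n \in \N} \lambda^{\texttt{D}}_{n,r} = \lim_{r \to +\infty} \lambda^{\texttt{D}}_{1,r} = 1$. Extension by zero embeds $H^2_0(-r,r)$ into $H^2(\R)$, so the Plancherel identity combined with $2\omega^2 - \omega^4 \le 1$ produces
\[
\int_{-r}^r \bigl(2(v')^2 - (v'')^2\bigr) \dd x = \int_{\R} (2\omega^2 - \omega^4)\, \hat{v}^2 \dd \omega \le \int_{\R} \hat{v}^2 \dd \omega = \int_{-r}^r v^2 \dd x,
\]
which forces $\lambda^{\texttt{D}}_{n,r} \le 1$ via the Rayleigh characterization. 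Monotonicity $\lambda^{\texttt{D}}_{1,r_1} \le \lambda^{\texttt{D}}_{1,r_2}$ for $r_1 \le r_2$ also follows from the same zero extension, so the supremum collapses to a limit. For the matching lower bound, I would test the Rayleigh quotient at $v_r(x) := \phi_r(x) \cos x$, where $\phi_r$ is a smooth cutoff equal to $1$ on $[-r+1, r-1]$ and vanishing together with its derivative at $\pm r$. The bulk contribution scales like $r$ and reproduces the ratio $1$, while the boundary contributions remain uniformly bounded in $r$.

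The leftmost equality $\lim_{q\to 1+}\beta^{\ast}(q) = 1$ then follows by a sandwich: the upper bound $\beta^{\ast}(q) \le 1$ is contained in Remark \ref{rem:omega}, while the inequality $\beta^{\ast}_r(q) \ge \lambda^{\texttt{D}}_{1,r}/(q - (q-1)/2)$ obtained inside the proof of Lemma \ref{lemma:nonempty}, together with Proposition \ref{prop:beta_lim} and the just-proved identity $\lim_{r\to+\infty} \lambda^{\texttt{D}}_{1,r} = 1$, gives $\beta^{\ast}(q) \ge 2/(q+1) \to 1$ as $q \to 1+$.

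Finally, the eigenfunction convergence $v^{\mathtt{D}}_{1,r}(x) \to \cos x$ is the most delicate step, and I expect it to be the main obstacle. Because $\cos x \notin L^2(\R)$, convergence must be understood locally; I would aim for uniform convergence on compact subsets under the normalization $v^{\mathtt{D}}_{1,r}(0) = 1$. The explicit expression for $v^{\mathtt{D}}_{1,r}$ from \ref{sec:eigen} is a linear combination of trigonometric and hyperbolic modes whose frequencies and decay rates are determined by a transcendental matching condition at $\pm r$. The task reduces to showing that, as $r \to +\infty$, the trigonometric frequency tends to $1$ while the hyperbolic coefficient tends to $0$ on every bounded interval, and I expect the asymptotic analysis of that transcendental equation to be the essential tool for this step.
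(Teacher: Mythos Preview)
Your proposal is correct and covers all five equalities with sound arguments. The periodic block and the Fourier upper bound for the Dirichlet eigenvalues coincide with the paper's Lemmas~\ref{lemma:eigen_per} and~\ref{lemma:lambda>=1}. The main methodological difference lies in the identity $\lim_{r\to+\infty}\lambda^{\texttt{D}}_{1,r}=1$: you obtain the lower bound variationally by testing the Rayleigh quotient at a cutoff times $\cos x$, whereas the paper (Lemma~\ref{lemma:eigen2}) proceeds via the explicit transcendental equation $F(r,\lambda)=0$ of Lemma~\ref{lemma:eigen1}, uses the Implicit Function Theorem for monotonicity, and sandwiches the first branch between the explicit curves $\bar\lambda_n(r)$ of \eqref{eq:curves_lambda+}, which tend to $1$. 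Your route is shorter and more robust; the paper's route, though heavier, yields finer structural information (the location of the critical points of $\lambda_{1}(r)$ and the explicit eigenfunctions \eqref{eq:eigenfunction}) that the paper exploits elsewhere.

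One small slip in your last paragraph: for large $r$ we have $\lambda^{\texttt{D}}_{1,r}\in(0,1)$, so both characteristic roots $\nu_{1}=\sqrt{1-\sqrt{1-\lambda}}$ and $\nu_{2}=\sqrt{1+\sqrt{1-\lambda}}$ are real, and the even eigenfunction is a combination of the two \emph{trigonometric} modes $\cos(\nu_{1}x)$ and $\cos(\nu_{2}x)$ rather than a trigonometric--hyperbolic pair. The convergence $v^{\texttt{D}}_{1,r}\to\cos x$ on compacta then comes simply from $\nu_{1},\nu_{2}\to 1$ as $\lambda\to 1$, together with the coefficients (determined by the boundary conditions) both tending to $\tfrac{1}{2}$ under the normalization $v^{\texttt{D}}_{1,r}(0)=1$; the explicit formula \eqref{eq:eigenfunction} at $r=r_{k}$ makes this transparent. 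No hyperbolic damping needs to be controlled.
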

That is, suprema of all Dirichlet and periodic eigenvalues coincide, moreover, in the case of the periodic problem, the supremum is attained. Besides that, the periodic eigenvalues are easier to compute. Description of both the eigenvalues and their properties are stated in \ref{sec:eigen}.

Motivated by this fact,
let us now consider the periodic problem
\begin{equation}
\label{eq:fucik_per}
\left\{ \begin{array}{l}
v^{(4)} + 2v'' + \alpha v^+ - \beta v^- = 0, \quad x\in \R,\\[3pt]
v(x + T) = v(x),
\end{array} \right.
\end{equation}
with $T>0$ and denote by $\Sigma^{\texttt{P}}_T$ the corresponding \fucik\ spectrum of \eqref{eq:fucik_per}, i.e.,
$$
\Sigma^{\texttt{P}}_T := \left\{ (\alpha,\beta) \in \R^2: ~ \mbox{\eqref{eq:fucik_per} has a nontrivial solution} \right\}.
$$
Unlike $\Sigma^{\texttt{D}}_r$, 
the \fucik~spectrum $\Sigma^{\texttt{P}}_T$ is easier to obtain. For the full analytic description of its first component see \ref{sec:fucik_per}. Denoting
\begin{equation}
\label{eq:beta_conjecture1}
\hat\beta=\hat\beta(q) := \sup\limits_{T>0} \sup \left\{ \beta > 0: (q \beta, \beta)\in\Sigma_{T}^\textup{\texttt{P}} \right\},
\end{equation}
we state the following.

\begin{conjecture}
\label{con:periodic1}
Let $\Sigma^\textup{\texttt{P}}_T$ be the \fucik\ spectrum of \eqref{eq:fucik_per} with a period $T > 0$ and $\hat\beta$ be given by \eqref{eq:beta_conjecture1}. Then for any $q > 1$, $\beta^{\ast}(q) = \hat\beta(q)$. Moreover, both suprema in \eqref{eq:beta_conjecture1} are attained, namely
\begin{equation}
\label{eq:beta_conjecture2}
\beta^{\ast}(q) = \hat\beta(q)
= \max_{2\pi\leq T \leq \sqrt{5}\pi} \max \left\{ \beta > 0: \, (q\beta,\beta) \in \Sigma^\textup{\texttt{P}}_T \right \}. 
\end{equation}
\end{conjecture}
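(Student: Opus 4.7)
The plan is to split the conjectured equality $\beta^{\ast}(q)=\hat\beta(q)$ into two inequalities and then separately pin down the narrow period window $T\in[2\pi,\sqrt{5}\pi]$ from the explicit description of $\Sigma^{\texttt{P}}_T$ in \ref{sec:fucik_per}.

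The easier direction $\beta^{\ast}(q)\ge \hat\beta(q)$ should follow by mimicking the damping construction used in Lemmas \ref{veta1} and \ref{veta2}. Given any $T>0$ and any periodic solution $v_T$ of \eqref{eq:fucik_per} with $(q\beta,\beta)\in\Sigma^{\texttt{P}}_T$, I set $v_n(x):=v_T(x)\,w_n(x)$ with the quadratic cut-off $w_n$ from \eqref{rce80} on an interval of length $\sim 2nT$. Multiplying \eqref{eq:fucik_per} by $v_T$ and integrating over one period yields the Pohozaev-type identity
\begin{equation*}
\int_0^T\bigl((v_T'')^2-2(v_T')^2+q\beta(v_T^+)^2+\beta(v_T^-)^2\bigr)\d x=0,
\end{equation*}
which makes the leading-$n$ contribution to $J(q\beta',\beta',v_n)$ proportional to $(\beta-\beta')$. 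A computation in the spirit of \ref{sec:app3} should yield $J(q\beta',\beta',v_n)<0$ for every $\beta'<\beta$ and all sufficiently large $n$, so $\beta^{\ast}(q)\ge\beta$ and hence $\beta^{\ast}(q)\ge\hat\beta(q)$.

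For the reverse inequality $\beta^{\ast}(q)\le\hat\beta(q)$ I would exploit Proposition \ref{prop:connection}: for each $r>\sqrt{2}\pi/2$, the maximizer $v_{r,q}\in H^2_0(-r,r)$ solves the Dirichlet Fu\v{c}\'{\i}k problem \eqref{eq:fucik} with $(q\beta^{\ast}_r(q),\beta^{\ast}_r(q))$. The numerics in Figures \ref{obr21} and \ref{obr3} indicate that $v_{r,q}$ develops an asymptotically periodic pattern in the bulk of $(-r,r)$ as $r\to+\infty$. To make this rigorous I would normalize $v_{r,q}$ so that its central positive semi-wave has unit height, use local $H^2$ compactness together with interior elliptic regularity for the Swift--Hohenberg linearization to extract a weak limit $v_\infty$ on $\R$, identify its period $T^{\ast}$ from the spacing of interior zeros, and verify via the elliptic equation that $v_\infty$ is a nontrivial periodic solution of \eqref{eq:fucik_per} at the limit pair $(q\beta^{\ast}(q),\beta^{\ast}(q))$. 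Combined with Proposition \ref{prop:beta_lim} this yields both the inequality and the attainment of the outer supremum in \eqref{eq:beta_conjecture1}. The restriction $T^{\ast}\in[2\pi,\sqrt{5}\pi]$ I would deduce from a monotonicity analysis along the ray $\alpha=q\beta$ applied to the explicit first branch of $\Sigma^{\texttt{P}}_T$: the lower endpoint $T=2\pi$ arises from the linear limit $q\to 1+$ via Lemma \ref{lemma:eigen_sup}, while the upper endpoint $T=\sqrt{5}\pi=2r_1$ should be shown to bound the period at which nontrivial sign-changing Fu\v{c}\'{\i}k branches first emanate from the trivial one, beyond which the first-branch $\beta$-value strictly decreases.

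The main obstacle is the compactness/extraction step in the reverse direction. The maximizers $v_{r,q}$ may translate, slowly modulate their internal period, or concentrate their largest semi-waves near $\pm r$, and naive weak limits can collapse to zero or to non-periodic profiles. I expect a quantitative count of the semi-waves of $v_{r,q}$ together with a Bloch--Floquet decomposition of $\partial^4+2\partial^2$ will be needed to select a translation centering that produces a genuinely periodic, nontrivial limit with a single well-defined period $T^{\ast}$; once that limit is secured, the remaining period-localization argument should reduce to an elementary, though delicate, optimization on the explicitly known curves of $\Sigma^{\texttt{P}}_T$.
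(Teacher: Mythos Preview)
The paper does not contain a proof of this statement: it is explicitly labeled a \emph{conjecture}, and the authors write immediately afterwards that ``all the statements of Conjecture~\ref{con:periodic1} remain in the form of hypotheses and we are not able to prove or disprove them at the moment. Their probable validity is supported by numerical experiments.'' It is also listed as open problem~$Q_4$ in Section~\ref{sec:final}. So there is no ``paper's own proof'' to compare against; what you have written is a proposed attack on an open problem.

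On the substance of your sketch: the inequality $\beta^{\ast}(q)\ge\hat\beta(q)$ via damping a periodic Fu\v{c}\'{\i}k eigenfunction by $w_n$ is in the spirit of Lemmas~\ref{veta1} and~\ref{veta2} and is the more plausible half. One technical point to watch is that with the quadratic cutoff $w_n$ from \eqref{rce80} you have $w_n'(\pm s_n)\neq 0$, so $v_n=v_T\,w_n$ lies in $H^2(\R)$ only if $s_n$ is placed at a zero of $v_T$ (exactly the choice the paper makes in \eqref{rce11}); with that adjustment the leading-$n$ computation you outline should go through.

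The reverse inequality $\beta^{\ast}(q)\le\hat\beta(q)$ is the genuine difficulty, and you correctly flag it. Extracting a \emph{nontrivial periodic} limit from the Dirichlet maximizers $v_{r,q}$ as $r\to+\infty$ is exactly the step the authors could not complete; the dangers you list (translation, slow modulation, collapse to zero) are real, and nothing in the paper supplies the compactness or the Bloch--Floquet argument you would need. Likewise, the localization $T^{\ast}\in[2\pi,\sqrt{5}\pi]$ is asserted only on numerical grounds (Figures~\ref{obr16} and~\ref{obr17}); the paper offers no monotonicity analysis of the first branch of $\Sigma^{\texttt{P}}_T$ along rays $\alpha=q\beta$. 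So your plan is a reasonable program, but it is a program for an open problem, not a gap relative to an existing proof.
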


\begin{figure}[h]
\centerline{
  \setlength{\unitlength}{1mm}
  \begin{picture}(60, 60)(0,0)
    \put(0,0){\includegraphics[height=6.0cm]{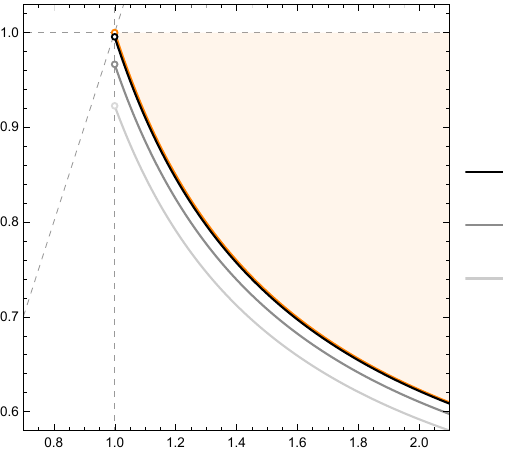}}
    \put(50.5,-2){\makebox(0,0)[lb]{\footnotesize$\alpha$}}         
    \put(-3,50.0){\makebox(0,0)[lb]{\footnotesize$\beta$}}        
    \put(68,36.0){\makebox(0,0)[lb]{\scriptsize$T = T_{1}$}}                  
    \put(68,29.0){\makebox(0,0)[lb]{\scriptsize$T = T_{2}$}}                  
    \put(68,22.0){\makebox(0,0)[lb]{\scriptsize$T = T_{3}$}}   
    \put(38,38){\makebox(0,0)[lb]{\footnotesize$\hat{\Omega}^{\texttt{+}}$}}            
    \put(38,21){\makebox(0,0)[lb]{\footnotesize$\mathscr{\hat{E}}$}}                                
  \end{picture}    
  \hspace{1.8cm}
  \begin{picture}(60, 60)(0,0)
    \put(0,0){\includegraphics[height=6.0cm]{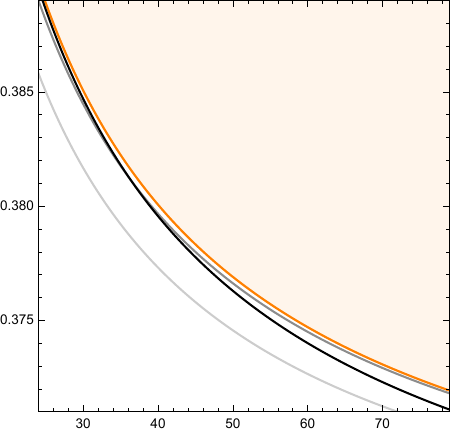}}
    \put(47,-2){\makebox(0,0)[lb]{\footnotesize$\alpha$}}         
    \put(-3,50.0){\makebox(0,0)[lb]{\footnotesize$\beta$}}             
    \put(38,38){\makebox(0,0)[lb]{\footnotesize$\hat{\Omega}^{\texttt{+}}$}}            
    \put(38,19){\makebox(0,0)[lb]{\footnotesize$\mathscr{\hat{E}}$}}                                    
  \end{picture}    
  }
\caption{The first \fucik~curve of $\Sigma_{T}^{\texttt{P}}$ (see \ref{sec:fucik_per})
for three different values of $T$
($T_{1} = \sqrt{4.2}\pi$, $T_{2} = \sqrt{4.6}\pi$ and $T_{3} = \sqrt{5}\pi$)
close to the diagonal $\alpha=\beta$ (left) 
and slightly further off the diagonal (right). 
The orange curve corresponds to the envelope $\mathscr{\hat{E}}$.
}
\label{obr16}
\end{figure}

Unfortunately, all the statements of Conjecture \ref{con:periodic1} remain in the form of hypotheses and we are not able to prove or disprove them at the moment. Their probable validity is supported by numerical experiments. If we denote
\begin{align*}
\mathscr{\hat{E}} := \left\{(q\hat\beta(q), \hat\beta(q)): q > 1\right\},
\end{align*}
and
$$
\hat{\Omega}^{\texttt{-}} := \left\{(q\beta, \beta)\in\Omega: q > 1,\ \beta < \hat{\beta}(q)\right\},\quad
\hat{\Omega}^{\texttt{+}} := \left\{(q\beta, \beta)\in\Omega: q > 1,\ \beta > \hat{\beta}(q)\right\},
$$
then the set $\mathscr{\hat{E}}$ is the envelope of all {\fucik} curves of 
$\Sigma_{T}^{\texttt{P}}$ for all $T > 0$ and Conjecture \ref{con:periodic1} states that $\mathscr{E}=\mathscr{\hat{E}}$ and $\Omega^{\texttt{-}} = \hat{\Omega}^{\texttt{-}}$, $\Omega^{\texttt{+}} = \hat{\Omega}^{\texttt{+}}$. 

\begin{figure}[h]
\centerline{
  \setlength{\unitlength}{1mm}
  \begin{picture}(148, 50)(-6, -4)
    \put(0,0){\includegraphics[width=14.0cm]{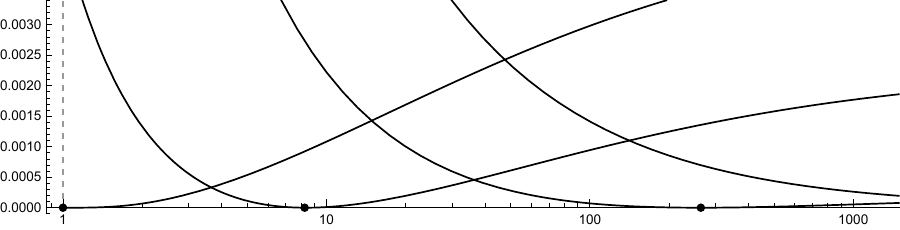}}
    \put(118,-2){\makebox(0,0)[lb]{$\alpha$}}         
    \put(-4,27.6){\makebox(0,0)[lb]{$\beta$}}             
    \put(92,28){\makebox(0,0)[lb]{\footnotesize$T = T_{0}$}}             
    \put(17,28){\makebox(0,0)[lb]{\footnotesize$T = T_{1}$}}             
    \put(36,28){\makebox(0,0)[lb]{\footnotesize$T = T_{2}$}}             
    \put(64,28){\makebox(0,0)[lb]{\footnotesize$T = T_{3}$}}                         
  \end{picture}    
  }
\caption{
Difference of 
the curve $\mathscr{\hat{E}}$ and 
the first \fucik~curve of $\Sigma_{T}^{\texttt{P}}$
for four different values of $T$ 
($T_{0}=2\pi$, 
$T_{1} = \sqrt{4.2}\pi$, 
$T_{2} = \sqrt{4.6}\pi$,
$T_{3} = \sqrt{5}\pi$)
in case they are parametrized as 
$(\alpha, \beta(\alpha))$ 
and we plot the difference of $\beta$ coordinates with respect to~$\alpha$.
}
\label{obr17}
\end{figure}

Figure \ref{obr16} shows analyticaly described parts of $\Sigma_T^{\texttt{P}}$ 
for several values $T$ (cf. \ref{sec:fucik_per}) and 
the envelope $\mathscr{\hat{E}}$ (in orange color) which coincides with numerically obtained envelope $\mathscr{E}$. 
Notice that we can observe mutual intersections of periodic \fucik\ spectra 
with different periods $T$, which means 
that---unlike the eigenvalue case---we cannot say that the ``highest'' one corresponds to the period $T=2\pi$.
Difference of $\mathscr{\hat{E}}$ and particular \fucik\ curves is visualized in Figure \ref{obr17}.


\section{Open problems}
\label{sec:final}

Although we have provided an exact theoretical description of the optimal travelling 
wave speed range (cf. Theorem \ref{th:main2}) as well as families of its approximations 
which are easy to evaluate (cf. Theorems \ref{veta11} and \ref{veta12}) together 
with their envelopes (cf. Theorems \ref{veta22} and \ref{veta33}), several questions still remain open.

First of all, our methods are based on variational approach, 
namely Mountain Pass Theorem, and Theorems \ref{th:main1} and \ref{th:main2} provide 
the maximal possible wave speed range for which the mountain pass geometry is preserved. 
So the following natural questions arise.

\begin{description}
\item{$Q_1$:} \emph{Is it possible to enlarge the interval for admissible values of the wave speed up to $\left(\sqrt[4]{4b},\sqrt[4]{4a}\right)$ by using some other methods? Or are there no travelling wave solutions with the wave speed close enough to $\sqrt[4]{4b}$?}
\end{description}

Moreover, obtained results can be interpreted so that for given $0<b<a$, there exist infinitely many travelling wave solutions of \eqref{eq:original_problem} with different wave speeds from the interval given in \eqref{eq:c}. Another open question concerns the number of travelling wave solutions with the same wave speed, their stability and interaction properties. Champneys and McKenna in \cite{champneys_mckenna} proved multiplicity in the case $a = 1$, $b = 0$ for given wave speeds from certain nested intervals. As far as we know, the following questions for a general setting of parameters are not answered yet.

\begin{description}
\item{$Q_2$:} \emph{For which fixed values of wave speed $c$ from the admissible interval 
do there exist at least two different travelling wave solutions? 
Or for which fixed values of $c$ the uniqueness result holds true?}
\end{description}

Finding the appropriate answers can be quite difficult, but the following task should be more feasible.

\begin{description}
\item{$T_1$:}
\emph{For some fixed values of $a,b$ and $c$, construct two different travelling wave solutions of \eqref{eq:original_problem}.}
\end{description}
 
Further, our description of the wave speed range is characterized by the envelope of 
a collection of all \fucik\ spectra $\Sigma_r^{\texttt{D}}$ of the Dirichlet boundary value 
problems \eqref{eq:fucik} with $r>0$. 
The problem is that the description of \fucik\ spectra of the fourth-order operators are, 
in general, unknown. Some partial results were obtained, e.g., by Krej\v{c}\'{i} in \cite{Krejci_1983}, 
however, the following question is left open.

\begin{description}
\item{$Q_3$:}  \emph{What is the analytical description of the \fucik\ spectrum $\Sigma_r^\textup{\texttt{D}}$ for $r>0$? Or can we analytically describe their envelope?}
\end{description}

In Section \ref{sec:approx1}, 
we show that suitable polynomial and/or rational approximations of Dirichlet \fucik~eigenfunctions 
lead to explicit analytical approximations of the admissible wave speed range
given in \eqref{eq:c}
(cf. Theorems \ref{veta11} and \ref{veta12}). 
Moreover, in Section \ref{sec:approx2}, we show for which pairs 
$\left(4a/c^{4},4b/c^{4}\right)$ there exists the travelling wave solution of
the problem \eqref{eq:original_problem} with the wave speed $c$ 
(cf. Theorems \ref{veta22} and \ref{veta33}).
Let us point out that those pairs are below the curve $\mu$ or below $\eta_{T}^{p_{1},p_{2}}$
(see Figure \ref{fig:main_222})
and that there is still a gap between $\eta_{T}^{p_{1},p_{2}}$ and the border line $\mathscr{E}$
under which $J(\alpha,\beta,v)$ possesses negative
values for some $v \in H^2(\R)$.
Thus, it is reasonable to formulate the following task. 

\begin{description}
\item{$T_2$:} \emph{
Let $\alpha_{0} > 1$ be fixed and denote
$$
  \beta_{0}^{\bullet} := 
  \sup_{\substack{x_{1},x_{2} > 0 \\ p_{1},p_{2} > 0}}
  \left\{
  \beta_{0} > 0:\ \textup{the equation \eqref{rce2} is satisfied}
  \right\}.
$$
For $\beta > \beta_{0}^{\bullet}$, construct $v\in H^{2}(\R)$ such that $J(\alpha_{0}, \beta, v) < 0$.}
\end{description}
 
Unlike $\Sigma_r^{\texttt{D}}$, description of the \fucik\ spectrum $\Sigma_T^{\texttt{P}}$ 
of the periodic problem \eqref{eq:fucik_per} with $T>0$ is much more straightforward. 
Motivated by the fact that suprema of Dirichlet eigenvalues correspond to suprema 
(or maxima, respectively) of periodic eigenvalues, we expect that the same holds 
true also for the envelopes of Dirichlet and periodic \fucik\ spectra and that this 
fact can be used for a better characterization of the optimal travelling wave speed range. 
Unfortunately, this statement remains in the form of hypothesis and leads to the following question. 

\begin{description}
\item{$Q_4$:}  \emph{Is it possible to confirm or refute Conjecture \ref{con:periodic1}?}
\end{description}


\appendix
\section{Eigenvalues of Dirichlet and periodic problems}
\label{sec:eigen}

In this appendix, we provide a description of periodic and Dirichlet eigenvalues of  \eqref{eq:eigen} and \eqref{eq:eigen_periodic}. For simplicity, we consider both problems with additional evenness assumption, i.e., the Dirichlet eigenvalue problem
\begin{equation}
\label{eq:eigen_even}
\left\{ \begin{array}{l}
v^{(4)} + 2v'' + \lambda v = 0, \quad x \in (-r,r),\\[3pt]
v(\pm r) = v'(\pm r) = 0, \, v(-x) = v(x),
\end{array} \right.
\end{equation}
and the periodic eigenvalue problem
\begin{equation}
\label{eq:eigen_per_even}
\left\{ \begin{array}{l}
v^{(4)} + 2v'' + \lambda v = 0, \quad x \in \R,\\[3pt]
v(x + T) = v(x) = v(-x).
\end{array} \right.
\end{equation}
As we mentioned in Section \ref{sec:connection}, the equation in both \eqref{eq:eigen_even}, \eqref{eq:eigen_per_even} can be rewritten with Swift-Hohenberg operator as $(\partial^2 + 1)^2 v = (1-\lambda) v$. 
By a direct calculation we easily obtain the following statement.


\begin{lemma}
\label{lemma:eigen_per}
For any $T>0$, the eigenvalues of \eqref{eq:eigen_per_even} form a sequence 
$(\lambda^\textup{\texttt{P}}_{n,T})_{n=0}^{+\infty}$: 
$$
\lambda^\textup{\texttt{P}}_{n,T} = 1 - \left( \left( {2\pi n}/{T}\right)^2 - 1 \right)^2
$$
with corresponding eigenfunctions in the form
$v_{n,T}^\textup{\texttt{P}}(x)= \cos (2\pi n x/T)$, $n \in \N_0$.
Consequently,
$$
\sup_{T>0,n\in\N_0} \lambda^\textup{\texttt{P}}_{n,T} = \lambda^\textup{\texttt{P}}_{k,2\pi k} = 1, \quad k \in \N,
$$
with  
$v_{k,2\pi k}^\textup{\texttt{P}}(x) = \cos x$.
\end{lemma}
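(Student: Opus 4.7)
The plan is to exploit that the operator $L := \partial^4 + 2\partial^2$ has constant coefficients and commutes with translations, so on the space of even $T$-periodic functions it is diagonalized by the Fourier cosine basis $\{\cos(2\pi n x/T)\}_{n\ge 0}$. Concretely, I would first plug the ansatz $v_n(x) := \cos(2\pi n x/T)$ into the equation: since $v_n'' = -(2\pi n/T)^2 v_n$, I get
$$Lv_n = \bigl((2\pi n/T)^4 - 2 (2\pi n/T)^2\bigr) v_n,$$
so that $v^{(4)} + 2v'' + \lambda v = 0$ forces $\lambda = -(2\pi n/T)^4 + 2(2\pi n/T)^2 = 1 - ((2\pi n/T)^2 - 1)^2$, giving the claimed formula for $\lambda^{\texttt{P}}_{n,T}$.

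Next I would argue that no other eigenvalues can occur. Two routes are available: either invoke completeness of the cosine system in the Hilbert space of even $T$-periodic $L^2$ functions (every eigenfunction admits a Fourier expansion, and $L$ acting on such an expansion reduces to scalar multiplication on each mode, so an eigenfunction must be a linear combination of modes sharing one value of $\lambda^{\texttt{P}}_{n,T}$), or alternatively solve the characteristic equation $\nu^4 + 2\nu^2 + \lambda = 0$ directly, obtain $\nu^2 = -1 \pm \sqrt{1-\lambda}$, and check that the only choices compatible with evenness and $T$-periodicity require $\nu = \pm 2\pi i n/T$ for some $n \in \N_0$, recovering the same list. I expect the Fourier-series argument to be cleaner.

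Finally, for the supremum, the formula $\lambda^{\texttt{P}}_{n,T} = 1 - ((2\pi n/T)^2 - 1)^2$ immediately gives $\lambda^{\texttt{P}}_{n,T} \le 1$, with equality if and only if $(2\pi n/T)^2 = 1$, i.e., $T = 2\pi n$. Taking $n = k \in \N$ and $T = 2\pi k$ yields $\lambda^{\texttt{P}}_{k, 2\pi k} = 1$ with eigenfunction $\cos(2\pi k\, x/(2\pi k)) = \cos x$, as claimed. The hard part, if any, is really just the completeness step of item 2; everything else is routine algebra with the characteristic polynomial factorization $\mu^4 + 2\mu^2 + \lambda = (\mu^2 + 1)^2 - (1-\lambda)$ coming from the Swift--Hohenberg form $(\partial^2 + 1)^2 v = (1-\lambda) v$ already noted in the paper.
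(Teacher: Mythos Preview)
Your proposal is correct and essentially matches the paper's approach: the paper's proof consists of the single sentence ``By a direct calculation we easily obtain the following statement,'' and what you have written is precisely that direct calculation spelled out in detail. Your use of the Fourier cosine basis (or equivalently the characteristic equation) together with the Swift--Hohenberg factorization is exactly the computation the authors have in mind.
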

Dependence of $\lambda^{\texttt{P}}_{n,T}$ on $T$ is illustrated in Figure \ref{fig:eigen}, left. As for the Dirichlet eigenvalues, the situation is more complicated.
We split its description into several partial assertion.

\begin{figure}[htb]
\centerline{
  \setlength{\unitlength}{1mm}
  \begin{picture}(65, 65)(0,0)
    \put(0,5){\includegraphics[height=6.0cm]{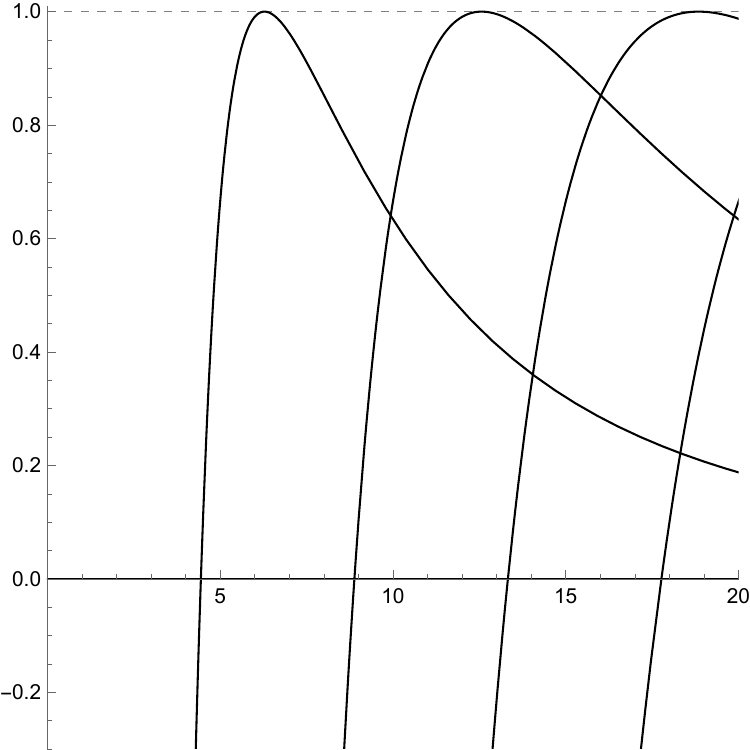}}
  	\put(5,14){\makebox(0,0)[lb]{\footnotesize$\lambda^{\texttt{P}}_{0,T}$}} 
    \put(12,0){\makebox(0,0)[lb]{\footnotesize$\lambda^{\texttt{P}}_{1,T}$}}     
    \put(25,0){\makebox(0,0)[lb]{\footnotesize$\lambda^{\texttt{P}}_{2,T}$}}     
    \put(37,0){\makebox(0,0)[lb]{\footnotesize$\lambda^{\texttt{P}}_{3,T}$}}         
    \put(50,0){\makebox(0,0)[lb]{\footnotesize$\lambda^{\texttt{P}}_{4,T}$}}
    \put(65,18){\makebox(0,0)[lb]{\footnotesize$T$}}
    \put(-5,62){\makebox(0,0)[lb]{\footnotesize$\lambda$}}
  \end{picture}    
\hspace{1.4cm} 
  \begin{picture}(65, 65)(0,0)
    \put(0,5){\includegraphics[height=6.0cm]{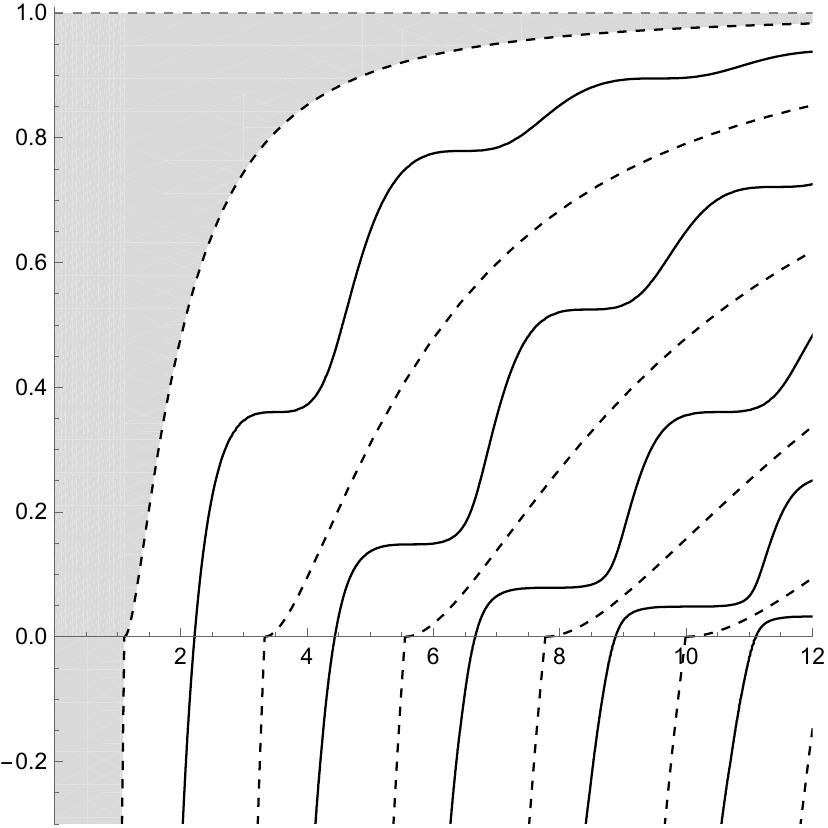}}
    \put(61,62){\makebox(0,0)[lb]{\footnotesize$\bar\lambda_1$}}
    \put(11,0){\makebox(0,0)[lb]{\footnotesize$\lambda^{\texttt{D}}_{1,r}$}}     
    \put(61,56){\makebox(0,0)[lb]{\footnotesize$\bar\lambda_2$}}     
    \put(21,0){\makebox(0,0)[lb]{\footnotesize$\lambda^{\texttt{D}}_{2,r}$}}         
    \put(61,46){\makebox(0,0)[lb]{\footnotesize$\bar\lambda_3$}}
    \put(30,0){\makebox(0,0)[lb]{\footnotesize$\lambda^{\texttt{D}}_{3,r}$}}
    \put(61,35){\makebox(0,0)[lb]{\footnotesize$\bar\lambda_4$}}
    \put(40,0){\makebox(0,0)[lb]{\footnotesize$\lambda^{\texttt{D}}_{4,r}$}}
    \put(61,24){\makebox(0,0)[lb]{\footnotesize$\bar\lambda_5$}}
    \put(50,0){\makebox(0,0)[lb]{\footnotesize$\lambda^{\texttt{D}}_{5,r}$}}
    \put(61,12){\makebox(0,0)[lb]{\footnotesize$\bar\lambda_6$}}  
    \put(65,18){\makebox(0,0)[lb]{\footnotesize$r$}}    
    \put(-5,62){\makebox(0,0)[lb]{\footnotesize$\lambda$}}         
  \end{picture}    
  }
\caption{Eigenvalues $\lambda^{\texttt{P}}_{n,T}$ of \eqref{eq:eigen_per_even} 
for $n=0,1,2,3,4$ on the left, and components of the zero level set of $F$ given by \eqref{eq:F_a_lambda} 
(solid curves) and their bounds (dashed curves) on the right.}
\label{fig:eigen}
\end{figure}
\begin{lemma}
\label{lemma:lambda>=1}
The problem \eqref{eq:eigen_even} has only a trivial solution for any $\lambda \geq 1$.
\end{lemma}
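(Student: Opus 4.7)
The plan is to use the Swift--Hohenberg factorisation $(\partial^2+1)^2 v = (1-\lambda)v$ already highlighted earlier in the paper and to test the equation against $v$ itself. Multiplying the ODE in \eqref{eq:eigen_even} by $v$, integrating over $(-r,r)$ and applying integration by parts twice, the boundary conditions $v(\pm r) = v'(\pm r) = 0$ kill all boundary terms. Together with the identity $(v''+v)^2 = (v'')^2 + 2v''v + v^2$, this yields the key energy relation
\begin{equation*}
  \int_{-r}^{r}\bigl(v''+v\bigr)^{2}\,\dd x \;=\; \int_{-r}^{r}\Bigl((v'')^{2} - 2(v')^{2} + v^{2}\Bigr)\,\dd x \;=\; (1-\lambda)\int_{-r}^{r} v^{2}\,\dd x.
\end{equation*}

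For $\lambda > 1$ the left-hand side is nonnegative while the right-hand side is nonpositive, so both must vanish; in particular $\int_{-r}^{r} v^{2}\,\dd x = 0$, forcing $v \equiv 0$. The delicate case is $\lambda = 1$, where the identity only gives $(\partial^{2}+1)v \equiv 0$, i.e.\ $v'' + v = 0$, so that $v(x) = A\cos x + B\sin x$. Plugging this ansatz into the four boundary conditions $v(\pm r) = v'(\pm r) = 0$ produces the linear system $A\cos r = B\sin r = A\sin r = B\cos r = 0$. Since $\cos r$ and $\sin r$ cannot vanish simultaneously, this forces $A = B = 0$.

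The argument is almost entirely routine once the integration by parts is set up, so the only step requiring any care is the $\lambda = 1$ case, where one must verify that the two conditions $v(\pm r) = 0$ together with $v'(\pm r) = 0$ overdetermine the two-parameter family of solutions of $v'' + v = 0$; the evenness assumption in \eqref{eq:eigen_even} is not actually needed for the conclusion but simplifies bookkeeping. No further obstacles are expected, and the same computation will be reused later when tabulating the eigenvalues $\lambda_{n,r}^{\texttt{D}}$ for $\lambda < 1$.
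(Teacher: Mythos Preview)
Your proof is correct and takes a genuinely different, more elementary route than the paper's. The paper also tests the equation against $v$ to obtain $\int_{-r}^{r}\bigl((v'')^{2}-2(v')^{2}+\lambda v^{2}\bigr)\,\dd x = 0$, but then extends $v$ by zero to all of $\R$ and applies the Fourier transform, rewriting the left-hand side as $\int_{\R}(\omega^{4}-2\omega^{2}+\lambda)\,\hat v^{2}\,\dd\omega = \int_{\R}\bigl((\omega^{2}-1)^{2}+\lambda-1\bigr)\hat v^{2}\,\dd\omega$; positivity of the symbol for $\lambda\geq 1$ (with equality only at $\omega=\pm 1$ when $\lambda=1$, a set of measure zero) forces $\hat v\equiv 0$. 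Your completion-of-squares identity $\int(v''+v)^{2} = \int\bigl((v'')^{2}-2(v')^{2}+v^{2}\bigr)$ is exactly the physical-space avatar of the Fourier factorisation $(\omega^{2}-1)^{2}$, so the two arguments are morally the same, but yours avoids the Fourier machinery entirely and handles the borderline case $\lambda=1$ by a clean finite-dimensional linear-algebra check on the boundary data. The paper's approach, on the other hand, treats $\lambda>1$ and $\lambda=1$ uniformly and does not require a separate case analysis.
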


\begin{proof}
We move on to the weak formulation of \eqref{eq:eigen_even} with $v \in H_0^2(-r,r)$ and take $\varphi = v$ as the test function to get
\begin{equation}
\nonumber
\label{eq:weak_form_eigen}
\int\limits_{-r}^{r} \left( (v'')^2 -2 (v')^2 + \lambda v^2 \right) \d x = 0.
\end{equation}
Now, we extend $v$ by zero onto the whole real line.  If we denote $\hat{v}(\omega) = \mathcal{F}(v(x))$ the Fourier transform of $v=v(x)$ and use $\mathcal{F}(v^n(x)) = (\ii \omega)^n \hat{v}(\omega)$ with $\ii^2 = -1$, we can write
\begin{equation}
\nonumber
\label{eq:weak_form_eigen_R}
\int\limits_{\R} \left( (v'')^2 -2 (v')^2 + \lambda v^2 \right) \d x = \int\limits_{\R} \left(\omega^4-2\omega^2+\lambda \right)\hat{v}^2 \d \omega = 0.
\end{equation}
Since $\omega^4-2\omega^2+\lambda \geq 0$ for all $\lambda \geq 1$ and the equality occurs only for $\lambda = 1$ at $\omega = \pm 1$, we obtain trivially $\hat{v}(\omega) = v(x) \equiv 0$.
\end{proof}

\begin{lemma}
\label{lemma:eigen1}
For any $r>0$, the eigenvalues of \eqref{eq:eigen_even} form a strictly decreasing sequence $\left(\lambda_{n,r}^\textup{\texttt{D}}\right)_{n=1}^{+\infty} \subset (-\infty,1)$ with $\lim\limits_{n\rightarrow + \infty}\lambda_{n,r}^\textup{\texttt{D}}=-\infty$. In particular, $\lambda_{n,r}^\textup{\texttt{D}}$ are solutions of $F(r,\lambda)=0$ where
\begin{equation}
\label{eq:F_a_lambda}
F(r,\lambda) =
\left\{ 
\begin{array}{lll} 
\nu_2 \sin(\nu_2 r) \cos(\nu_1 r)-\nu_1 \cos(\nu_2 r) \sin(\nu_1 r)   & \textup{for}~\lambda \in (0,1),\\[3pt]
\sqrt{2}\sin(\sqrt{2}r) & \textup{for}~\lambda = 0,\\[3pt]
\nu_2 \sin(\nu_2 r)\cosh(\nu_1 r) + \nu_1 \cos(\nu_2 r)\sinh(\nu_1 r)  & \textup{for}~\lambda <0,
\end{array} 
\right.
\end{equation}
with $\nu_1 = \sqrt{1 - \sqrt{1-\lambda}}$ for $\lambda \in (0,1)$, $\nu_1 = \sqrt{-1+\sqrt{1-\lambda}}$ for $\lambda < 0$, and $\nu_2 = \sqrt{1+\sqrt{1-\lambda}}$.
\end{lemma}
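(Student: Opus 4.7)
The plan is to (i) derive $F(r,\lambda)$ as a characteristic determinant from the explicit solution of the ODE, (ii) rule out $\lambda\ge 1$ via Lemma~\ref{lemma:lambda>=1}, and (iii) use self-adjointness with compact resolvent, together with an ODE dimension count, to obtain the sequence structure.

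For (i), the characteristic equation $\mu^4 + 2\mu^2 + \lambda = 0$ yields $\mu^2 \in\{-1 \pm \sqrt{1-\lambda}\}$. Combined with (ii), we may restrict to $\lambda \in (-\infty,1)$, where $\sqrt{1-\lambda}>0$. The three cases of \eqref{eq:F_a_lambda} then arise naturally according to the signs of the two values of $\mu^{2}$: for $\lambda\in(0,1)$ both are negative, for $\lambda=0$ one vanishes, and for $\lambda<0$ they have opposite signs. The even general solution of the ODE reads $A\cos(\nu_1 x)+B\cos(\nu_2 x)$, $A+B\cos(\sqrt{2}x)$, and $A\cosh(\nu_1 x)+B\cos(\nu_2 x)$, respectively, with $\nu_1,\nu_2$ as in the statement. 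Since evenness makes the conditions at $-r$ coincide with those at $r$, it suffices to impose $v(r)=v'(r)=0$, which is a $2\times 2$ homogeneous linear system in $(A,B)$; a direct computation shows that its determinant equals, up to sign, the expression $F(r,\lambda)$ of \eqref{eq:F_a_lambda}, so $\lambda$ is an eigenvalue iff $F(r,\lambda)=0$.

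For (iii), I would work with the operator $A v := v^{(4)} + 2v''$ on $D(A):=H^4(-r,r)\cap H^2_0(-r,r)$, restricted to the even subspace of $L^2(-r,r)$ (which is invariant under $A$ by symmetry). Self-adjointness follows from a direct integration-by-parts argument using the clamped boundary conditions, and the compact resolvent property follows from the compact embedding $D(A)\hookrightarrow L^2$. Moreover, the quadratic form $\langle Av,v\rangle=\|v''\|^2_{L^2}-2\|v'\|^2_{L^2}$ is bounded below (via $\|v'\|^2 \le \tfrac{1}{4}\|v''\|^2+\|v\|^2$), so $A$ has a discrete real spectrum accumulating only at $+\infty$; consequently the values $-\lambda_{n,r}^{\texttt{D}}$ form a non-decreasing sequence tending to $+\infty$, which yields $\lambda_{n,r}^{\texttt{D}}\to -\infty$. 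Strict decrease, i.e., simplicity of each eigenvalue, is a trivial dimension count: the space of even solutions of the ODE is two-dimensional, and the two conditions at $r$ leave an at most one-dimensional subspace.

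The main obstacle is the functional-analytic step in (iii)—specifically, verifying carefully that $A$ is self-adjoint with compact resolvent on the given domain and that its quadratic form is bounded below. A route avoiding abstract spectral theory is to analyze $F(r,\cdot)$ directly: it is real analytic in $\lambda<1$, and as $\lambda\to -\infty$ with $\nu_1,\nu_2\sim (-\lambda)^{1/4}$, the dominant contribution $\nu_2\sin(\nu_2 r)\cosh(\nu_1 r)$ changes sign infinitely often, producing a sequence of zeros going to $-\infty$. The drawback is that isolating these zeros and proving they exhaust the spectrum requires more delicate asymptotic bookkeeping than simply quoting standard spectral theory.
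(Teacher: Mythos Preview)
Your proposal is correct and follows essentially the same route as the paper: the derivation of $F(r,\lambda)$ by writing down the even fundamental system and taking the $2\times 2$ determinant at $x=r$, the use of Lemma~\ref{lemma:lambda>=1} to exclude $\lambda\ge 1$, and the simplicity argument via the two-dimensional even solution space all match the paper's proof. The only difference is that where the paper simply cites standard fourth-order Sturm--Liouville theory (Greenberg) for the existence of a strictly decreasing sequence tending to $-\infty$, you unpack that citation by verifying self-adjointness, compact resolvent, and semiboundedness of $A=\partial^4+2\partial^2$ on $H^4\cap H^2_0$ restricted to even functions; your sketch of this step (including the interpolation estimate $\|v'\|^2\le\tfrac14\|v''\|^2+\|v\|^2$) is sound, so there is no genuine obstacle.
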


\begin{proof}
The problem \eqref{eq:eigen_even} is the standard Sturm-Liouville problem of the fourth order and the existence of a monotone sequence of its eigenvalues $\left(\lambda^{\texttt{D}}_{n,r}\right)_{n=1}^{+\infty}$ bounded from above was proved, e.g., in \cite{greenberg}. Due to the evenness assumption, all the eigenvalues are simple.
Moreover, Lemma~\ref{lemma:lambda>=1} implies $\lambda^{\texttt{D}}_{n,r}<1$ for all $n \in \N$. 
By a direct calculation we easily obtain that $\lambda = \lambda^{\texttt{D}}_{n,r}$ is an eigenvalue of \eqref{eq:eigen_even} if and only if $F(r,\lambda)=0$ with $F$ given by \eqref{eq:F_a_lambda}. 
\end{proof}

The zero level set of $F$ is depicted in Figure \ref{fig:eigen}, right (solid curves). 
Now, we discuss the dependence of $\lambda^{\texttt{D}}_{n,r}$ on $r$. 
 
\begin{lemma}
\label{lemma:eigen2}
For all $n \in \N$, $\lambda_{n,r}^\textup{\texttt{D}}$ are increasing $C^1$ functions of $r > 0$ satisfying 
$$
\sup_{r>0} \lambda_{n,r}^\textup{\texttt{D}} = \lim_{r \to +\infty} \lambda_{n,r}^\textup{\texttt{D}} = 1 \quad \mbox{and} \quad 
\lim_{r \to 0+} \lambda_{n,r}^\textup{\texttt{D}} = -\infty.
$$ 
\end{lemma}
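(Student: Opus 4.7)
The plan is to combine a Courant--Fischer variational characterization with the implicit function theorem, a test-function construction at the large-$r$ end, and a scaling argument at the small-$r$ end. Since the operator $Lv:=v^{(4)}+2v''$ with even Dirichlet boundary conditions on $(-r,r)$ is self-adjoint with compact resolvent, the eigenvalues of \eqref{eq:eigen_even} admit the min-max description
\[
\lambda_{n,r}^{\texttt{D}}=\sup_{\substack{V\subset H^{2}_{0,\mathrm{even}}(-r,r)\\ \dim V=n}}\inf_{v\in V\setminus\{0\}}\mathcal{R}(v),\qquad \mathcal{R}(v):=\frac{\int_{-r}^{r}\bigl(2(v')^{2}-(v'')^{2}\bigr)\,\dd x}{\int_{-r}^{r}v^{2}\,\dd x},
\]
where $H^{2}_{0,\mathrm{even}}(-r,r)$ denotes the subspace of even functions of $H^{2}_{0}(-r,r)$. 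Monotonicity in $r$ is immediate: for $r_{1}<r_{2}$, extension by zero embeds $H^{2}_{0,\mathrm{even}}(-r_{1},r_{1})$ into $H^{2}_{0,\mathrm{even}}(-r_{2},r_{2})$ while preserving both integrals in $\mathcal{R}$, so $\lambda_{n,r_{1}}^{\texttt{D}}\le\lambda_{n,r_{2}}^{\texttt{D}}$. Strict inequality follows because equality would force the zero-extended eigenfunction to be an eigenfunction on $(-r_{2},r_{2})$ vanishing identically on $[r_{1},r_{2}]$, which by uniqueness for the linear fourth-order ODE makes it trivial.

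For the $C^{1}$-dependence on $r$, I would apply the implicit function theorem to $F(r,\lambda)=0$ from Lemma~\ref{lemma:eigen1}. The space of even solutions of the ODE is two-dimensional (parametrised by $v(0)$ and $v''(0)$), and the two Dirichlet conditions at $x=r$ impose rank exactly one at eigenvalues, so each $\lambda_{n,r}^{\texttt{D}}$ is simple. This yields $\partial_{\lambda}F(r,\lambda_{n,r}^{\texttt{D}})\neq 0$ and hence a $C^{1}$ (in fact real-analytic) branch of simple zeros.

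To show $\lim_{r\to+\infty}\lambda_{n,r}^{\texttt{D}}=1$, I combine the strict upper bound $\lambda_{n,r}^{\texttt{D}}<1$ coming from Lemma~\ref{lemma:lambda>=1} with a matching lower bound obtained by exhibiting, for each $n$ and each $\varepsilon>0$, an $n$-dimensional subspace of $H^{2}_{0,\mathrm{even}}(-r,r)$ on which $\inf\mathcal{R}>1-\varepsilon$. The natural candidates are modulated bumps $v_{k}^{R}(x):=\cos(x)\bigl[\phi_{k}(x)+\phi_{k}(-x)\bigr]$, where the $\phi_{k}$ are smooth cutoffs of width $R$ with pairwise disjoint supports inside $(0,r)$. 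A short Fourier computation based on the Plancherel-type identity $\int\bigl((v'')^{2}-2(v')^{2}+v^{2}\bigr)\,\dd x=\int(\omega^{2}-1)^{2}|\hat v|^{2}\,\dd \omega$ from Remark~\ref{rem:omega}, together with the fact that the transform of $\cos(\cdot)\phi_{k}$ is concentrated near $\omega=\pm 1$ at scale $1/R$, yields $1-\mathcal{R}(v_{k}^{R})=O(1/R^{2})$. Disjoint supports decouple both the numerator and denominator of $\mathcal{R}$ on the span, so $\mathcal{R}$ is a convex combination of the individual quotients, giving $\inf\mathcal{R}\ge 1-O(1/R^{2})$.

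Finally, the scaling $u(t):=v(rt)$ transforms \eqref{eq:eigen_even} into $u^{(4)}+2r^{2}u''+r^{4}\lambda u=0$ on $(-1,1)$ with even Dirichlet data. Setting $\mu:=-r^{4}\lambda$, the family $u^{(4)}+2r^{2}u''=\mu u$ depends analytically on the parameter $r^{2}$ and the perturbation $2r^{2}\partial^{2}$ is of lower order than $\partial^{4}$ with relative bound tending to $0$; by the Rellich--Kato perturbation theorem its $n$-th eigenvalue $\mu_{n}(r^{2})$ is a continuous (in fact analytic) function of $r^{2}$ near $0$, with limit $\mu_{n}(0)>0$ equal to the $n$-th even Dirichlet eigenvalue of the biharmonic on $(-1,1)$. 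Therefore $\lambda_{n,r}^{\texttt{D}}=-\mu_{n}(r^{2})/r^{4}\to-\infty$ as $r\to 0^{+}$. I expect the main difficulty to lie in the third paragraph: certifying uniformly on the $n$-dimensional span that $\mathcal{R}(v_{k}^{R})\to 1$ at the advertised rate, which is precisely why choosing test functions with disjoint supports---so as to decouple the quadratic forms entering $\mathcal{R}$---is the crucial simplifying move.
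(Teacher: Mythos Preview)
Your argument is correct but follows a genuinely different route from the paper. The paper works entirely with the explicit transcendental equation $F(r,\lambda)=0$ from Lemma~\ref{lemma:eigen1}: it verifies $F_{\lambda}\neq 0$ and $-F_{r}/F_{\lambda}\ge 0$ by direct computation, and then introduces the change of variables $\xi=r(\nu_{2}+\nu_{1})$, $\eta=r(\nu_{2}-\nu_{1})$, which puts the zero set into the separated form $\widehat{F}(\xi,\eta)=\sin\xi/\xi+\sin\eta/\eta=0$. From this one reads off that the branches are trapped between explicit curves $\bar\lambda_{n}(r)$ (given in \eqref{eq:curves_lambda+}--\eqref{eq:curves_lambda-}), whose asymptotics immediately yield both limits. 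Your approach instead uses the min--max principle for monotonicity, modulated bump test functions $\cos(x)\phi_{k}(x)$ with disjoint supports for the limit $r\to+\infty$, and a scaling plus Kato--Rellich argument for $r\to 0^{+}$. What your route buys is robustness: none of your steps rely on the closed form of $F$, so the same proof would go through for related fourth-order problems where no such formula is available. What the paper's route buys is precision: the explicit separating curves $\bar\lambda_{n}$ and the exact points $r_{k}=\tfrac{\pi}{2}\sqrt{4k^{2}+1}$ at which $\mathrm{d}\lambda_{1}/\mathrm{d}r=0$ are used immediately afterwards (Remark~\ref{rem:eigen}) to pin down where the nodal structure of $v_{1,r}^{\texttt{D}}$ changes and to write the eigenfunctions \eqref{eq:eigenfunction} in closed form---information your abstract argument does not deliver. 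One small point: your strict-monotonicity step needs the observation that if $\lambda_{n,r}$ were constant on an interval one can produce, via a dimension count against the span of higher eigenspaces on the larger interval, an eigenfunction on $(-r_{2},r_{2})$ supported in $[-r_{1},r_{1}]$, and only then invoke ODE uniqueness; the sentence as written glosses over why the zero-extended function is itself an eigenfunction when $n>1$.
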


\begin{proof}
We start with the region $r>0$, $\lambda \in (0,1)$ and examine properties of the set $F(r,\lambda) = 0$
(cf. Lemma \ref{lemma:eigen1} and \eqref{eq:F_a_lambda}). Using standard calculations we obtain $F_{\lambda} = \partial F / \partial {\lambda} \neq 0$ and $\d \lambda/\d r = -F_r/F_{\lambda} \geq 0$ for all $r>0$ and $\lambda \in (0,1)$ satisfying $F(r,\lambda) = 0$. 
The Implicit Function Theorem implies that the zero level set of $F$ for $r>0$, $\lambda \in (0,1)$ can be locally described as an increasing function $\lambda = \lambda(r)$. To see the global description, we use the following transform.

Let us rewrite $F(r,\lambda) = (\nu_2-\nu_1)\sin r(\nu_2+\nu_1) + (\nu_2+\nu_1)\sin r(\nu_2-\nu_1)$, set $\xi := r(\nu_2+\nu_1)$ and $\eta := r(\nu_2-\nu_1)$ and define 
\begin{equation}
\nonumber
\label{eq:transform2}
\widehat{F}(\xi,\eta) := \dfrac{\sin \xi}{\xi} + \dfrac{\sin \eta}{\eta}
\end{equation}
with $\xi > \eta > 0$.
Thus $F(r,\lambda) = 0$ if and only if $\widehat{F}(\xi,\eta) = 0$. 
The separated form of $\widehat{F}$ enables to show easily that the zero level set of $\widehat{F}$ consists of countably many functions $\eta_n = \eta_n(\xi)$, $n \in \N$ defined for $\xi > n\pi$ and satisfying $(2n - 1)\pi/2 < \eta_n(\xi) < (2n + 1)\pi/2$, $n \in \N$ (see the solid curves $\eta_n(\xi)$ separated by the dashed ones in Figure \ref{fig:eigen_dir_tr}). We also have $\widehat{F}(\xi,\eta) > 0$ for any $\xi > \eta \in (0,\pi/2]$, i.e., no component of $\widehat{F}=0$ lies in the gray region in Figure \ref{fig:eigen_dir_tr}. 

Moving back to the original variables, we get that the zero level set of $F$ in the region $r>0$, $\lambda \in (0,1)$ can be represented by countably many (increasing) functions $\lambda_n = \lambda_n(r)$, $n \in \N$, defined for $r > \sqrt{2}n\pi/2$ and separated by curves
\begin{equation}
\label{eq:curves_lambda+}
\bar\lambda_n = \left(1 - {\pi^2 (2n-1)^2}/({8 r^2})\right)^2, \quad r \in \left({\sqrt{2}\pi (2n-1)}/{4},+\infty\right), ~ n \in \N,
\end{equation}
(see the dashed curves in Figures \ref{fig:eigen}, right).
Furthermore, $F(r,\lambda)>0$ for all $\lambda \geq (1-\pi^2/(8r^2))^2$ and there are no eigenvalues in the gray region in Figures \ref{fig:eigen}, right.

We can proceed analogously for $\lambda < 0$ to get functions $\lambda_n = \lambda_n(r)$, $r \in (0, \sqrt{2}n\pi/2)$, describing the zero level set of $F$ for $r > 0$ and $\lambda < 0$, separated by curves
\begin{equation}
\label{eq:curves_lambda-}
\bar\lambda_n = 1 - \left(1-{\pi^2 (2n-1)^2}/({4r^2})\right)^2, \quad r \in \left(0,{\sqrt{2}\pi (2n-1)}/{4}\right], ~ n \in \N.
\end{equation}
Additionally, functions $\lambda_n(r)$ are smoothly connected at points $(r, \lambda) = (\sqrt{2}n\pi/2,0)$. The asymptotic behavior of the bounds $\bar\lambda_n$ (cf. \eqref{eq:curves_lambda+}, \eqref{eq:curves_lambda-}) implies that, for all $n \in \N$, $\lambda_n(r) \rightarrow 1$ for $r \rightarrow +\infty$ and $\lambda_n(r) \rightarrow -\infty$ for $r \rightarrow 0+$.
\end{proof}

\begin{figure}[h]
\centerline{
  \setlength{\unitlength}{0.95mm}
  \begin{picture}(146, 72)(0, -4)
    \put(4,0){\includegraphics[width=13cm]{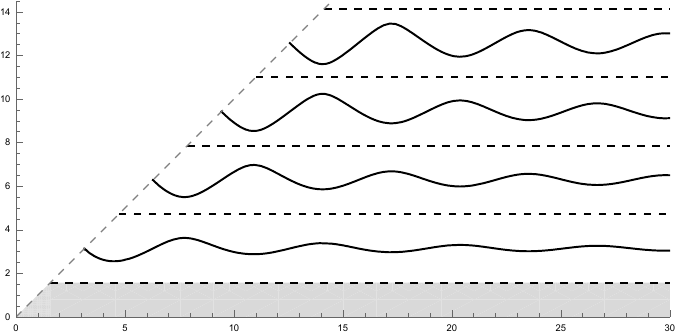}}
    \put(0,60){\makebox(0,0)[lb]{$\eta$}}
    \put(52,61){\makebox(0,0)[lb]{$\eta = \xi$}}
    \put(128,-3){\makebox(0,0)[lb]{$\xi$}}    
    \put(143,7){\makebox(0,0)[lb]{$\frac{\pi}{2}$}}
    \put(143,15){\makebox(0,0)[lb]{$\eta_1$}}     
    \put(143,21){\makebox(0,0)[lb]{$\frac{3\pi}{2}$}}     
    \put(143,30){\makebox(0,0)[lb]{$\eta_2$}}         
    \put(143,36){\makebox(0,0)[lb]{$\frac{5\pi}{2}$}}
    \put(143,45){\makebox(0,0)[lb]{$\eta_3$}}
    \put(143,50){\makebox(0,0)[lb]{$\frac{7\pi}{2}$}}
    \put(143,57){\makebox(0,0)[lb]{$\eta_4$}}
    \put(143,64){\makebox(0,0)[lb]{$\frac{9\pi}{2}$}}
  \end{picture}    
  }
\caption{Components of the zero level set of $\widehat{F}$ (solid curves) and their bounds (dashed curves) in the region $\xi > \eta > 0$, i.e., for $r>0$ and $\lambda \in (0,{1})$.}
\label{fig:eigen_dir_tr}
\end{figure}

\begin{remark}
\label{rem:eigen}
The properties of $F(r,\lambda)$ also imply that $\d \lambda_n(r)/\d r = 0$ if and only if 
$r \nu_2 = (2k-1)\pi/2$ and $r\nu_1 = (2j -1)\pi/2$ for $k,j \in \N$. 
In $r\lambda$ plane these points correspond to mutual intersections of curves
$\lambda = 1 - \left(1-{\pi^2 (2k-1)^2}/({4r^2})\right)^2$, $k \in \N$. 

In particular, on the $\lambda_1(r)$ component of 
$F(r,\lambda) = 0$, $\d \lambda_1/\d r = 0$ if and only if 
$$(r,\lambda_1) = \left(\pi\sqrt{4k^2+1}/2,(1 - 4 k^{2})^2/(1+4k^2)^2\right),\quad k\in \N.$$
Importance of these points lies in the fact that the nodal properties 
of the corresponding eigenfunctions  $v_{1,r}^{\texttt{D}}$ change therein. 
Namely, for $r_k = \pi\sqrt{4k^2+1}/2$, $k \in \N$, we have
(see Figure \ref{obr19})
\begin{eqnarray}
\label{eq:eigenfunction}
v_{1,r_k}^{\texttt{D}}(x) = 
  \tfrac{2k+1}{4k}\cos\left(\tfrac{(2k-1)}{\sqrt{4k^2+1}}x\right)+ 
  \tfrac{2k-1}{4k}\cos\left(\tfrac{(2k+1)}{\sqrt{4k^2+1}}x\right).
\end{eqnarray}
That is, the first eigenfunction corresponding to 
$\lambda_{1,r}^{\texttt{D}} = \lambda_1(r)$ is of constant sign only 
for $0< r \le \pi\sqrt{5}/2$.
\end{remark}

\begin{figure}[h]
\centerline{
  \setlength{\unitlength}{1mm}
  \begin{picture}(44, 28)(0, 0)
    \put(0,0){\includegraphics[width=4.5cm]{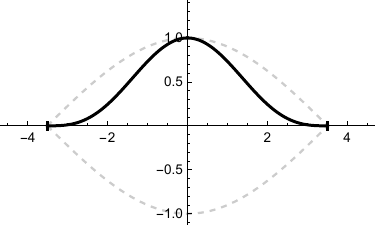}}
    \put(43,9.0){\makebox(0,0)[lb]{\footnotesize$x$}}  
    \put(1.5,14.0){\makebox(0,0)[lb]{\footnotesize$-r_{1}$}}            
    \put(38.5,14.0){\makebox(0,0)[lb]{\footnotesize$r_{1}$}}        
    \put(24.0,24.0){\makebox(0,0)[lb]{\footnotesize$y = v_{1,r_{1}}^{\texttt{D}}(x)$}}            
  \end{picture}    
  \hspace{6pt}
  \begin{picture}(44, 28)(0, 0)
    \put(0,0){\includegraphics[width=4.5cm]{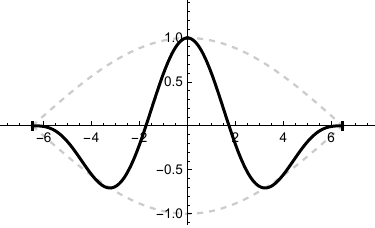}}
    \put(0.0,14.0){\makebox(0,0)[lb]{\footnotesize$-r_{2}$}}            
    \put(40.0,14.0){\makebox(0,0)[lb]{\footnotesize$r_{2}$}}        
    \put(43,9.0){\makebox(0,0)[lb]{\footnotesize$x$}}    
    \put(24.0,24.0){\makebox(0,0)[lb]{\footnotesize$y = v_{1,r_{2}}^{\texttt{D}}(x)$}}            
  \end{picture}    
  \hspace{6pt}  
  \begin{picture}(44, 28)(0, 0)
    \put(0,0){\includegraphics[width=4.5cm]{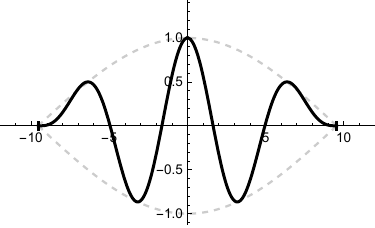}}
    \put(0.5,14.0){\makebox(0,0)[lb]{\footnotesize$-r_{3}$}}            
    \put(39.5,14.0){\makebox(0,0)[lb]{\footnotesize$r_{3}$}}        
    \put(43,9.0){\makebox(0,0)[lb]{\footnotesize$x$}}    
    \put(24.0,24.0){\makebox(0,0)[lb]{\footnotesize$y = v_{1,r_{3}}^{\texttt{D}}(x)$}}            
  \end{picture}    
  }
\caption{The eigenfunctions $v_{1,r}^{\texttt{D}}$
of the Dirichlet problem \eqref{eq:eigen} for
$r = r_{1}=\pi\sqrt{5}/2$ (left), 
$r = r_{2}=\pi\sqrt{17}/2$ (middle), 
$r = r_{3}=\pi\sqrt{37}/2$ (right).
}
\label{obr19}
\end{figure}

\section{\fucik~spectrum $\Sigma^{\texttt{P}}_T$ of the periodic problem}
\label{sec:fucik_per}

In this part, we show the way how to
describe the periodic \fucik\ spectrum $\Sigma^{\texttt{P}}_T$ of \eqref{eq:fucik_per}. 
For simplicity, we omit our considerations only to the ``first even'' component $\Sigma^{\texttt{P}}_{1,T}$ of $\Sigma^{\texttt{P}}_T$. 
To be more specific, by $\Sigma^{\texttt{P}}_{1,T}$ we mean the set of
pairs $(\alpha,\beta) \in \Sigma^{\texttt{P}}_T$ such that
there exist a nontrivial solution of \eqref{eq:fucik_per} which is 
$T$ periodic, even, sign-changing and has exactly one node on $\big(0,\frac{T}{2}\big)$.
The following statement provides the full description of $\Sigma^{\texttt{P}}_{1,T}$ in $\Omega$, other parts 
of $\Sigma^{\texttt{P}}_T$ could be treated in the same way. 
Curves $\Sigma^{\texttt{P}}_{1,T} \cap \Omega$ for several values of $T$ are illustrated in Figure~\ref{obr16}. 
Notice also that $\left(\lambda_1^{\texttt{P}}(T),\lambda_1^{\texttt{P}}(T)\right) \in \Sigma^{\texttt{P}}_{1,T}$. 

\begin{lemma}
\label{lemma:fucik_per}
For  $(\alpha,\beta) \in \Omega$, let us denote
$$
\mu_1 := \sqrt[4]{\alpha}\sin \left(\tfrac{1}{2} \arctan \sqrt{\alpha - 1} \right), \quad 
\mu_2 := \sqrt[4]{\alpha}\cos \left(\tfrac{1}{2} \arctan \sqrt{\alpha - 1} \right), 
$$
$$
\nu_1 := \sqrt{1-\sqrt{1-\beta}}, \quad \nu_2 := \sqrt{1+\sqrt{1-\beta}},
$$
and introduce
\begin{align*}
F_{ij}(\alpha,\beta,\tau,T) := & -2\mu_1 \mu_2 (\cosh 2\mu_1 \tau + \cos 2\mu_2 \tau) \nu_i \sin \nu_i\big(\tau-\tfrac{T}{2}\big) + \\
& \left( -\mu_1(\mu_1^2 - 3\mu_2^2 + \nu_j^2)\sin 2\mu_2 \tau + \mu_2(\mu_2^2 - 3\mu_1^2-\nu_j^2) \sinh 2 \mu_1 \tau \right)\cos \nu_i\big(\tau-\tfrac{T}{2}\big).
\end{align*}
Then
$(\alpha,\beta) \in \Sigma^\textup{\texttt{P}}_{1,T} \cap \Omega$ 
with $T>0$ if and only if there exist $\tau \in (0,\pi/(2\mu_2))$ such that 
$$
F_{12}(\alpha,\beta,\tau,T) = 0 \quad \wedge \quad F_{21}(\alpha,\beta,\tau,T) = 0.
$$
\end{lemma}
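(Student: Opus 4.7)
The plan is to exploit the prescribed symmetries to reduce \eqref{eq:fucik_per} to a piecewise linear ODE and then translate the matching conditions at the node into determinantal equations in~$\tau$. A nontrivial solution $v \in \Sigma^{\texttt{P}}_{1,T} \cap \Omega$ is $T$-periodic, even, and has exactly one node on $(0, T/2)$, say at $x = \tau$; combining evenness with $T$-periodicity forces $v$ to be symmetric also about $T/2$, so $v > 0$ on $(-\tau, \tau)$ and $v < 0$ on $(\tau, T-\tau)$. On the positive piece, $v$ satisfies $v^{(4)} + 2v'' + \alpha v = 0$; since $\alpha > 1$, the characteristic roots are $\pm(\mu_1 \pm \ii \mu_2)$ with $\mu_1, \mu_2$ as defined, so even solutions take the form
\[
v_1(x) = A \cosh(\mu_1 x)\cos(\mu_2 x) + B \sinh(\mu_1 x)\sin(\mu_2 x).
\]
On the negative piece, $v$ satisfies $v^{(4)} + 2v'' + \beta v = 0$; since $0 < \beta < 1$, the characteristic roots are purely imaginary $\pm \ii \nu_1, \pm \ii \nu_2$, and solutions symmetric about $T/2$ take the form
\[
v_2(x) = C \cos(\nu_1(x - T/2)) + D \cos(\nu_2(x - T/2)).
\]

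Next I would glue $v_1$ and $v_2$ at $x = \tau$. A classical solution requires $v \in C^3$ at~$\tau$ (continuity of $v^{(4)}$ follows automatically from the ODE at a zero of~$v$), which together with the nodal equations $v_1(\tau) = 0 = v_2(\tau)$ yields five linear constraints on the four coefficients $(A, B, C, D)$. I would use the two nodal equations to fix the ratios $A\!:\!B$ and $C\!:\!D$; this leaves two free scaling parameters, so the three remaining derivative-matching equations form a homogeneous $3 \times 2$ linear system. Nontriviality of $(A, B, C, D)$ is therefore equivalent to this matrix having rank at most one, which is exactly two independent vanishing-minor conditions.

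The main computational step is to identify these two conditions with $F_{12} = 0$ and $F_{21} = 0$. After substituting the normalizations $(A, B) \propto (\sinh(\mu_1 \tau)\sin(\mu_2 \tau), -\cosh(\mu_1 \tau)\cos(\mu_2 \tau))$ and $(C, D) \propto (\cos(\nu_j(\tau - T/2)), -\cos(\nu_i(\tau - T/2)))$ for $\{i, j\} = \{1, 2\}$ (the index pair $(i, j)$ records which frequency is eliminated on the negative side), the quantities $v_1^{(k)}(\tau) - v_2^{(k)}(\tau)$ for $k = 1, 2, 3$ collapse---via $\cosh^2 u - \sinh^2 u = 1$, $\sinh 2u = 2\sinh u \cosh u$ and $\sin 2v = 2\sin v \cos v$---into expressions of exactly the announced shape. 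The cubes of the characteristic roots are precisely what produce the polynomial factors $\mu_1^2 - 3\mu_2^2 + \nu_j^2$ and $\mu_2^2 - 3\mu_1^2 - \nu_j^2$ in the second summand. The restriction $\tau \in (0, \pi/(2\mu_2))$ is then imposed to ensure that $\tanh(\mu_1 x)\tan(\mu_2 x)$ is strictly increasing on $(0, \tau)$, which (once $v_1(\tau) = 0$) forces $v_1 > 0$ on $(-\tau, \tau)$; hence $\tau$ is the \emph{first} positive zero of~$v_1$ and $(\alpha, \beta)$ indeed belongs to $\Sigma^{\texttt{P}}_{1,T}$ rather than to a higher component.

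The main obstacle I anticipate is bookkeeping: verifying that the chosen pair of minors is simultaneously independent on all of $\Omega$ and jointly equivalent to the full rank-drop condition. In particular, one must rule out that any auxiliary factor (for example $\cosh(\mu_1 \tau)\cos(\mu_2 \tau)$ or $\cos(\nu_i(\tau - T/2))$) can vanish on parts of $\Omega$ in a way that either erases a genuine branch of $\Sigma^{\texttt{P}}_{1,T}$ or introduces a spurious one; this will likely require a careful case analysis depending on the sign of such factors in the prescribed range of~$\tau$.
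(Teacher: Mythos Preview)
Your proposal is correct and follows essentially the same approach as the paper: write the solution piecewise using the even general solution of $v^{(4)}+2v''+\alpha v=0$ on $(-\tau,\tau)$ and the $T/2$-symmetric general solution of $v^{(4)}+2v''+\beta v=0$ on $(\tau,T-\tau)$, then impose $C^{3}$-matching at the node $\tau$ and reduce to the two equations $F_{12}=F_{21}=0$. The paper compresses the whole algebraic reduction into the phrase ``by a direct substitution into these conditions and after suitable simplifications,'' whereas you spell out the rank structure (two nodal equations fixing the ratios $A{:}B$ and $C{:}D$, then three derivative matchings on two scales) and give an explicit reason---monotonicity of $\tanh(\mu_1 x)\tan(\mu_2 x)$---for the restriction $\tau\in(0,\pi/(2\mu_2))$, which the paper simply asserts.
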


\begin{proof}
A couple $(\alpha,\beta)$ belongs to $\Sigma^\textup{\texttt{P}}_{1,T} \cap \Omega$ if and only if there exists a nontrivial even solution of \eqref{eq:fucik_per} in the form  
$$
v(x) = \left\{\begin{array}{ll}
v_{\texttt{pw}}(x) = A_1 \cosh \mu_1 x \cos \mu_2 x + A_2 \sinh \mu_1 x \sin \mu_2 x, \quad & x \in (-\tau,\tau),\\[3pt]
v_{\texttt{nw}}(x) = B_1 \cos \nu_1\big(x-\frac{T}{2}\big) + B_2 \cos \nu_2\big(x-\frac{T}{2}\big), \quad & x \in (\tau,T-\tau)
\end{array}
\right.
$$
with the only node $\tau \in \big(0,\frac{T}{2}\big)$ and real constants 
$A_{i}$,  $B_{i}$, $i=1,2$.
To ensure smoothness of $v$, we must have 
$v_{\texttt{pw}}^{(i)}(\tau-) = v_{\texttt{nw}}^{(i)}(\tau+)$ for $i = 0,1,2,3$.
By a direct substitution into these conditions and after suitable simplifications we obtain $F_{12}(\alpha,\beta,\tau,T) = 0$ and $F_{21}(\alpha,\beta,\tau,T)=0$ with $\tau \in (0,\pi/(2\mu_2))$.
\end{proof}

Notice that for $T>0$ fixed, both expressions 
$F_{12}(\alpha,\beta,\tau,T) = 0$, $F_{21}(\alpha,\beta,\tau,T)=0$ 
represent surfaces in $(\alpha,\beta,\tau)$ space. That is, $\Sigma^{\texttt{P}}_{1,T}$ 
is a curve which corresponds to the projection of their intersection into 
$\alpha\beta$ plane and the connection point $\tau$ can be seen as a parameter of this curve. 
Moreover, the envelope 
$\hat{\cal E}_1$ of $\Sigma_{1,T}^{\texttt{P}}$
can be characterized in the following way. 

\begin{lemma}
\label{lemma:per_envelope}
Let $\mu_{1}$, $\mu_{2}$ and $\nu_{1}$, $\nu_{2}$ be as in Lemma \ref{lemma:fucik_per} and let us set
$$
G(\alpha,\beta,\tau) := \nu_2 \arctan G_{12}(\alpha,\beta,\tau) - \nu_1 (\arctan G_{21}(\alpha,\beta,\tau) - \pi)
$$
with
$$
G_{ij}(\alpha,\beta,\tau) := \frac{\mu_2 (\mu_2^2 - 3\mu_1^2 - \nu_j^2)\sinh 2\mu_1 \tau - \mu_1 (\mu_1^2 - 3\mu_2^2 + \nu_j^2) \sin 2\mu_2 \tau}{2\mu_1 \mu_2 \nu_i (\cosh 2 \mu_1 \tau + \cos 2 \mu_2 \tau)}.
$$
Then 
$(\alpha,\beta) \in \hat{\cal E}_1 \cap \Omega$ 
if and only if 
there exist $\tau \in (0,\pi/(2\mu_2))$ such that
$$
G(\alpha,\beta,\tau) = 0 \quad \wedge \quad \frac{\partial G}{\partial \tau}(\alpha,\beta,\tau) = 0.
$$
\end{lemma}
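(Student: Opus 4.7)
The plan is to reduce the pair of equations $F_{12}(\alpha,\beta,\tau,T)=0$ and $F_{21}(\alpha,\beta,\tau,T)=0$ from Lemma~\ref{lemma:fucik_per} to a single equation in $(\alpha,\beta,\tau)$ by eliminating the family parameter $T$, and then to characterize the envelope via a critical-value argument on the resulting two-dimensional surface. Concretely, I would divide each $F_{ij}=0$ by $\cos\nu_i(\tau-T/2)$ (which is nonzero in the relevant open region) to rewrite it as $\tan\nu_i(\tau-T/2)=G_{ij}(\alpha,\beta,\tau)$. Setting $s:=T/2-\tau>0$ and choosing the branches of $\arctan$ corresponding to the first \fucik~curve (i.e.\ to a solution with exactly one node on $(0,T/2)$), these identities become $\nu_1 s=-\arctan G_{12}$ and $\nu_2 s=\pi-\arctan G_{21}$. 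Eliminating $s$ between them yields precisely $G(\alpha,\beta,\tau)=0$. Conversely, for any $(\alpha,\beta,\tau)\in\Omega\times(0,\pi/(2\mu_2))$ with $G=0$, one recovers a unique $T>0$ from $T=2\tau-2\arctan G_{12}/\nu_1$, and the matching construction of $v_{\texttt{pw}}$ and $v_{\texttt{nw}}$ from the proof of Lemma~\ref{lemma:fucik_per} produces a nontrivial solution of \eqref{eq:fucik_per}. Hence the two-dimensional set
\begin{equation*}
\mathcal{S}:=\bigl\{(\alpha,\beta,\tau)\in\Omega\times(0,\pi/(2\mu_2)): G(\alpha,\beta,\tau)=0\bigr\}
\end{equation*}
parametrizes $\bigcup_{T>0}(\Sigma^{\texttt{P}}_{1,T}\cap\Omega)$, with each fixed-$T$ slice of $\mathcal{S}$ projecting to $\Sigma^{\texttt{P}}_{1,T}\cap\Omega$ in the $\alpha\beta$ plane.

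Next I would apply the classical envelope characterization. Let $P\colon\mathcal{S}\to\R^2$ denote the projection $(\alpha,\beta,\tau)\mapsto(\alpha,\beta)$. By definition, the envelope $\hat{\cal E}_1$ is the set of critical values of $P|_{\mathcal{S}}$. Since $\mathcal{S}$ is a regular level set of $G$, its tangent plane at any point is $\ker\nabla G$, while $\ker dP=\operatorname{span}(\partial_\tau)$; hence $P$ fails to be a local diffeomorphism exactly where $\partial_\tau\in\ker\nabla G$, that is, where $\partial G/\partial\tau=0$. This yields the asserted equivalence: $(\alpha,\beta)\in\hat{\cal E}_1\cap\Omega$ if and only if there exists $\tau\in(0,\pi/(2\mu_2))$ with $G(\alpha,\beta,\tau)=0$ and $\partial G/\partial\tau(\alpha,\beta,\tau)=0$.

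The main obstacle I expect is the verification of the two technical ingredients on which the projection argument rests: (a) on the first \fucik~curve the principal branches of the arctangent really yield the correct formulas, i.e.\ $-\nu_1 s$ and $\pi-\nu_2 s$ both lie in $(-\pi/2,\pi/2)$ throughout the region of interest; and (b) $G$ is of class $C^1$ and $\mathcal{S}$ is a regular surface there. Point (a) can be handled by anchoring at the diagonal endpoint $\alpha=\beta=\lambda^{\texttt{P}}_{1,T}$, where the formulas reduce to the even-eigenvalue computation in Appendix~\ref{sec:eigen}, and then propagating by continuity along the family. Point (b) reduces to checking that the denominators $2\mu_1\mu_2\nu_i(\cosh 2\mu_1\tau+\cos 2\mu_2\tau)$ appearing in $G_{ij}$ never vanish on the open region, which is immediate from $\mu_i,\nu_i>0$ and $2\mu_2\tau<\pi$.
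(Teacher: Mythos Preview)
Your proposal is correct and follows essentially the same approach as the paper: eliminate the period $T$ from the pair $F_{12}=0$, $F_{21}=0$ by solving each for $\tau-\tfrac{T}{2}$ (which is exactly your division-by-cosine and arctangent step), obtain the single relation $G(\alpha,\beta,\tau)=0$, and then impose the standard envelope condition $\partial G/\partial\tau=0$. The paper's proof is a three-sentence sketch of precisely this strategy; your version supplies the details the paper omits, in particular the branch selection for the arctangent and the projection/critical-value interpretation of the envelope.
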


\begin{proof}
Using Lemma \ref{lemma:fucik_per} and expressing $\big(\tau - \frac{T}{2}\big)$ from both equations $F_{12}=0$, $F_{21}=0$, we obtain $G =0$. Hence, adding a condition $\partial G/ \partial{\tau} = 0$ leads to the desired envelope description.
\end{proof}


\section{Expansion of $J(\alpha,\beta,v_{n})$ in terms of $n$}
\label{sec:app3}

In this part, we justify the formula \eqref{ref10} in the proof of Lemma~\ref{veta1}.
Let us consider functions $v_{n} \in H^2(\R)$ defined in \eqref{rce11}, i.e., we have
\begin{equation} \label{ref11}
  v_{n}(x) = 
  \left\{
  \begin{array}{lll}
    \vpt(x - p_{k})w_{n}(x)                
    && \text{for } x \in [p_{k} - x_{1}, p_{k} + x_{1}],\ k\in\{1-n,\dots,n-1\}, \\[3pt]
    \vnt(x - q_{k})w_{n}(x) 
    && \text{for } x \in (q_{k} - x_{2}, q_{k} + x_{2}),\ k\in\{-n,\dots,n-1\}, \\[3pt]
    0 && \text{for } |x| \ge s_{n},
  \end{array}
  \right.
\end{equation}
where
$p_{k} = 2k(x_{1} + x_{2})$ and $q_{k} = (2k+1)(x_{1} + x_{2})$.
By a direct computation, we obtain
\begin{align*}
  \int_{p_{k} - x_{1}}^{p_{k} + x_{1}} \left(v_{n}\right)^{2} \dd x 
  & = 
  G_{0}(x_{1})
  \left(\tfrac{p_{k}^{2}}{s_{n}^{2}} - 1\right)^{2} +  
  G_{1}(x_{1})\cdot\tfrac{p_{k}^{2}}{s_{n}^{4}} + 
  G_{2}(x_{1},s_{n}),
\\
  \int_{p_{k} - x_{1}}^{p_{k} + x_{1}} \left(v_{n}'\right)^{2} \dd x 
  & = 
  M_{0}(x_{1})\left(\tfrac{p_{k}^{2}}{s_{n}^{2}} - 1\right)^{2} +    
  M_{1}(x_{1})\cdot\tfrac{p_{k}^{2}}{s_{n}^{4}} +
  M_{2}(x_{1},s_{n}),
\\
  \int_{p_{k} - x_{1}}^{p_{k} + x_{1}} \left(v_{n}''\right)^{2} \dd x 
  & = 
  N_{0}(x_{1})\left(\tfrac{p_{k}^{2}}{s_{n}^{2}} - 1\right)^{2} +     
  N_{1}(x_{1})\cdot\tfrac{p_{k}^{2}}{s_{n}^{4}} +
  N_{2}(x_{1},s_{n}),
\\
  \int_{q_{k} - x_{2}}^{q_{k} + x_{2}} \left(v_{n}\right)^{2} \dd x 
  & = 
  G_{0}(x_{2})\left(\tfrac{q_{k}^{2}}{s_{n}^{2}} - 1\right)^{2} +  
  G_{1}(x_{2})\cdot\tfrac{q_{k}^{2}}{s_{n}^{4}} + 
  G_{2}(x_{2},s_{n}),
\\
  \int_{q_{k} - x_{2}}^{q_{k} + x_{2}} \left(v_{n}'\right)^{2} \dd x 
  & = 
  M_{0}(x_{2})\left(\tfrac{q_{k}^{2}}{s_{n}^{2}} - 1\right)^{2} +    
  M_{1}(x_{2})\cdot\tfrac{q_{k}^{2}}{s_{n}^{4}} +
  M_{2}(x_{2},s_{n}),
\\
  \int_{q_{k} - x_{2}}^{q_{k} + x_{2}} \left(v_{n}''\right)^{2} \dd x 
  & = 
  N_{0}(x_{2})\left(\tfrac{q_{k}^{2}}{s_{n}^{2}} - 1\right)^{2} +     
  N_{1}(x_{2})\cdot\tfrac{q_{k}^{2}}{s_{n}^{4}} +
  N_{2}(x_{2},s_{n}),
\end{align*}
where functions $G_{i}$, $M_{i}$ and $N_{i}$ for $i=0,1,2$ are given by
\begin{align*}
\begin{array}{rclrclrcl}
  G_{0}(x) &\hspace{-6pt}=\hspace{-6pt}& \displaystyle\tfrac{4 x^{3}}{15}, \qquad
  &
  G_{1}(x) &\hspace{-6pt}=\hspace{-6pt}& \displaystyle\tfrac{8 x^{5}}{35}, \qquad
  &
  G_{2}(x,s) &\hspace{-6pt}=\hspace{-6pt}& \displaystyle\tfrac{4 x^{7} - 24 s^{2} x^{5}}{315 s^{4}},
  \\[12pt]
  M_{0}(x) &\hspace{-6pt}=\hspace{-6pt}& \displaystyle\tfrac{2 x}{3}, \qquad
  &
  M_{1}(x) &\hspace{-6pt}=\hspace{-6pt}& \displaystyle\tfrac{28 x^{3}}{15}, \qquad
  &
  M_{2}(x,s) &\hspace{-6pt}=\hspace{-6pt}& \displaystyle\tfrac{22 x^{5} - 28 s^{2} x^{3}}{105 s^{4}},
  \\[12pt]
  N_{0}(x) &\hspace{-6pt}=\hspace{-6pt}& \displaystyle\tfrac{2}{x}, \qquad
  &
  N_{1}(x) &\hspace{-6pt}=\hspace{-6pt}& \displaystyle 28 x, \qquad
  &
  N_{2}(x,s) &\hspace{-6pt}=\hspace{-6pt}& \displaystyle\tfrac{42 x^{3} - 20 s^{2} x}{5 s^{4}}.
\end{array}  
\end{align*}
Due to \cite{Dattoli_1999}, we have that
\begin{align*}
\sum_{k=0}^{n-1}(x + k y)^r = 
\tfrac{y^{r}}{r+1}\left(B_{r+1}\left(n+\tfrac{x}{y}\right) - B_{r+1}\left(\tfrac{x}{y}\right)\right),
\end{align*}
where $B_{r+1}$ are Bernoulli polynomials, which allows us to simplify
\begin{align*}
  \sum_{k=1}^{n-1} p_{k}^{2} 
  & = T^{2}\sum_{k=1}^{n-1} k^{2} =
  \tfrac{T^{2}}{3}\left(B_{3}(n) - B_{3}(0)\right) =
  T^{2}\left(\tfrac{n^{3}}{3} - \tfrac{n^{2}}{2} + \tfrac{n}{6}\right),\\
  \sum_{k=1}^{n-1} p_{k}^{4} 
  & = T^{4}\sum_{k=1}^{n-1} k^{4} =
  \tfrac{T^{4}}{5}\left(B_{5}(n) - B_{5}(0)\right) =
  T^{4}\left(\tfrac{n^{5}}{5} - \tfrac{n^{4}}{2} + \tfrac{n^{3}}{3} - \tfrac{n}{30}\right).  
\end{align*}
Moreover, since $s_{n}=n T - x_{1}$, we have
\begin{align*}
  \sum_{k=1}^{n-1} \left(\tfrac{p_{k}^{2}}{s_{n}^{2}} - 1\right)^{2} =
  \sum_{k=1}^{n-1} \left(\tfrac{p_{k}^{4}}{s_{n}^{4}} - 2\cdot\tfrac{p_{k}^{2}}{s_{n}^{2}} + 1\right) =
  \tfrac{n}{5} - 2\cdot\tfrac{n}{3} + n + \mathcal{O}(1) = \tfrac{8}{15} n + \mathcal{O}(1)
\end{align*}
and thus, we get
\begin{align}
  \int_{v_{n} > 0} \left(v_{n}\right)^{2}\,\dd x
  & =  
  \int_{-x_{1}}^{x_{1}} \left(v_{n}\right)^{2}\,\dd x +
  2 \sum_{k=1}^{n-1}
  \int_{p_{k} - x_{1}}^{p_{k} + x_{1}} \left(v_{n}\right)^{2}\,\dd x 
  = 
  \tfrac{16}{15}G_{0}(x_{1}) n + \mathcal{O}(1), \label{ref21}
\\
  \int_{v_{n} > 0} \left(v_{n}'\right)^{2}\,\dd x
  & = 
  \int_{-x_{1}}^{x_{1}} \left(v_{n}'\right)^{2}\,\dd x +
  2 \sum_{k=1}^{n-1}
  \int_{p_{k} - x_{1}}^{p_{k} + x_{1}} \left(v_{n}'\right)^{2}\,\dd x
  =
  \tfrac{16}{15}M_{0}(x_{1}) n + \mathcal{O}(1), \label{ref22}
\\
  \int_{v_{n} > 0} \left(v_{n}''\right)^{2}\,\dd x
  & =  
  \int_{-x_{1}}^{x_{1}} \left(v_{n}''\right)^{2}\,\dd x +
  2 \sum_{k=1}^{n-1}
  \int_{p_{k} - x_{1}}^{p_{k} + x_{1}} \left(v_{n}''\right)^{2}\,\dd x 
  = 
  \tfrac{16}{15}N_{0}(x_{1}) n + \mathcal{O}(1). \label{ref23}
\end{align}
Similarly, we obtain 
\begin{align*}
  \sum_{k=0}^{n-1} \left(\tfrac{q_{k}^{4}}{s_{n}^{4}} - 2\cdot\tfrac{q_{k}^{2}}{s_{n}^{2}} + 1\right) =
  \tfrac{n}{5} - 2\cdot\tfrac{n}{3} + n + \mathcal{O}(1) = \tfrac{8}{15} n + \mathcal{O}(1),
\end{align*}
since we have
\begin{align*}
  \sum_{k=0}^{n-1} q_{k}^{2} & = 
  T^{2}\sum_{k=0}^{n-1} \left(k + \tfrac{1}{2}\right)^{2} =
  \tfrac{T^{2}}{3}\left(B_{3}\left(n + \tfrac{1}{2}\right) - B_{3}\left(\tfrac{1}{2}\right)\right) =
  T^{2}\left(\tfrac{n^{3}}{3} - \tfrac{n}{12} \right),\\
  \sum_{k=0}^{n-1} q_{k}^{4} & = 
  T^{4}\sum_{k=0}^{n-1} \left(k + \tfrac{1}{2}\right)^{4} =
  \tfrac{T^{4}}{5}\left(B_{5}\left(n + \tfrac{1}{2}\right) - B_{5}\left(\tfrac{1}{2}\right)\right) =  
  T^{4}\left(\tfrac{n^{5}}{5} - \tfrac{n^{3}}{6} + \tfrac{7n}{240}\right).  
\end{align*}
Moreover, we get
\begin{align}
  \int_{v_{n} < 0} \left(v_{n}\right)^{2}\,\dd x
  =  
  2 \sum_{k=0}^{n-1}
  \int_{q_{k} - x_{2}}^{q_{k} + x_{2}} \left(v_{n}\right)^{2}\,\dd x
  & = 
  \tfrac{16}{15}G_{0}(x_{2}) n + \mathcal{O}(1), \label{ref31}
\\
  \int_{v_{n} < 0} \left(v_{n}'\right)^{2}\,\dd x
  =  
  2 \sum_{k=0}^{n-1}
  \int_{q_{k} - x_{2}}^{q_{k} + x_{2}} \left(v_{n}'\right)^{2}\,\dd x
  & = 
  \tfrac{16}{15}M_{0}(x_{2}) n + \mathcal{O}(1), \label{ref32}
\\
  \int_{v_{n} < 0} \left(v_{n}''\right)^{2}\,\dd x
  =  
  2 \sum_{k=0}^{n-1}
  \int_{q_{k} - x_{2}}^{q_{k} + x_{2}} \left(v_{n}''\right)^{2}\,\dd x 
  & = 
  \tfrac{16}{15}N_{0}(x_{2}) n + \mathcal{O}(1). \label{ref33}
\end{align}
Finally, using \eqref{ref22}, \eqref{ref23}, \eqref{ref32} and \eqref{ref33}, we obtain
\begin{align*}
  \int_{\R} \left(
  \left(v_{n}''\right)^{2} - 2\left(v_{n}'\right)^{2} 
  \right) \dd x 
  & =
  \int_{v_{n} > 0} \left(v_{n}''\right)^{2} \dd x + \int_{v_{n} < 0} \left(v_{n}''\right)^{2} \dd x 
  -2\left(
  \int_{v_{n} > 0} \left(v_{n}'\right)^{2} \dd x + \int_{v_{n} < 0} \left(v_{n}'\right)^{2} \dd x   
  \right)\\[6pt]
  & = 
  \tfrac{16}{15}
  \left(
  N_{0}(x_{1}) + N_{0}(x_{2}) - 2M_{0}(x_{1}) - 2M_{0}(x_{2})
  \right)n
  + \mathcal{O}(1),
\end{align*}
and using \eqref{ref21} and \eqref{ref31}, we get
\begin{align*}
  J(\alpha, \beta, v_{n}) 
& = 
  \tfrac{16}{15}
  \left(
  N_{0}(x_{1}) + N_{0}(x_{2}) - 2M_{0}(x_{1}) - 2M_{0}(x_{2})
  + \alpha\cdot G_{0}(x_{1})
  + \beta\cdot G_{0}(x_{2})
  \right) 
  n + \mathcal{O}(1), \\
& = 
  \tfrac{16}{15}
  \left(
  \tfrac{2}{x_{1}} + \tfrac{2}{x_{2}} - \tfrac{4 x_{1}}{3} - \tfrac{4 x_{2}}{3}
  + \alpha\cdot \tfrac{4 x_{1}^{3}}{15}
  + \beta\cdot \tfrac{4 x_{2}^{3}}{15}
  \right) 
  n + \mathcal{O}(1),
\end{align*}
which justifies \eqref{ref10}.


\bigskip

\begin{acknowledgement}
	Authors H. Form\'{a}nkov\'{a} Lev\'{a} and G. Holubov\'{a} were supported by the Grant Agency of the Czech Republic, Grant No. 22--18261S.
\end{acknowledgement}
	\bibliographystyle{abbrv}
\bibliography{travelling_waves_references}
\end{document}